\newcommand{\diag}{\operatorname{diag}}
\DeclareMathOperator{\Tr}{Tr}
\newcommand{\refe}{\sigma}
\newcommand{\temp}{\mu}
\newcommand{\<}{\langle}
\renewcommand{\>}{\rangle}
\newcommand{\RR}{\mathbb{R}}
\newcommand{\HH}{\mathcal{H}}
\newcommand{\wspaceR}{\mathcal{P}_2(\mathbb{R}^n)}
\newtheorem{theorem}{Theorem}
\newtheorem{definition}[theorem]{Definition}
\newtheorem{example}[theorem]{Example}
\newtheorem{lemma}[theorem]{Lemma}
\newtheorem{remark}{Remark}
\newtheorem{corollary}[theorem]{Corollary}
\title{Supervised learning of sheared distributions using linearized optimal transport}
\author{Varun Khurana\thanks{Department of Mathematics, University of California, San Diego, CA
  (vkhurana@ucsd.edu, hkannan@ucsd.edu, acloninger@ucsd.edu, cmoosmueller@ucsd.edu)} \and Harish Kannan\footnotemark[1] \and
 Alexander Cloninger\footnotemark[1] \thanks{Halicio{\u g}lu Data Science Institute, University of California, San Diego, CA} \and Caroline Moosm\"uller\footnotemark[1]}
\date{}
\begin{document}

\maketitle

%%%%%%%%%% ABSTRACT %%%%%%%%%%%%%%%%%%%%%%%%%%%%%%%

\begin{abstract}
In this paper we study supervised learning tasks on the space of probability measures. We approach this problem by embedding the space of probability measures into $L^2$ spaces using the optimal transport framework. In the embedding spaces, regular machine learning techniques are used to achieve linear separability. This idea has proved successful in applications and when the classes to be separated are generated by shifts and scalings of a fixed measure. This paper extends the class of elementary transformations suitable for the framework to families of shearings, describing conditions under which two classes of sheared distributions can be linearly separated. We furthermore give necessary bounds on the transformations to achieve a pre-specified separation level, and show how multiple embeddings can be used to allow for larger families of transformations. We demonstrate our results on image classification tasks.

  \par\smallskip\noindent
  {\bf Keywords:} Optimal transport, linearization, embeddings, classification
  \par\smallskip\noindent
  {\bf MSC:} 60D05, 68T10, 68T05 %(maybe needs updating?)
\end{abstract}

%\todo[inline]{Major things to finish: 1) Fix up the shearing corollary.  2) Add the binary classification with multiple references part.  3) Add the proofs in the appendix A for the section 3.  4) add proofs in appendix B for section 4 and 5.}

%%%%%%%%% INTRO %%%%%%%%%%%%%%
\section{Introduction}

%% LOT INTRO %%%

We consider the problem of classifying probability measures $\mu_i$ on $\RR^n$ based on a finite set of pre-classified training data $\{(\mu_i, y_i)\}_{i=1}^N$, where $y_i$ denote the labels. The aim is to use the given training data to build a function $f$ that assigns a probability measure to its correct label, i.e.\ we study supervised learning techniques on the space of probability measures.

The problem of classifying probability measures rather than points in $\RR^n$ has a number of applications, a few examples being classification of population groups \cite{cloninger2019people}, and classification of flow cytometry and other measurements of cell or gene populations per person \cite{bruggner2014automated, cheng2017two, zhao2020detecting}.  
Note that for application purposes, we need to consider samples of probability measures $\mu_i$, hence the task requires to meaningfully compare and classify point clouds.

The largest issue associated with this classification problem is the generation of features of $\mu_i$ that can be used to build a classifier $f$. Many methods use an embedding idea to transform the set of probability measures into a Hilbert space in which regular machine learning techniques can be applied for the classification task, e.g.\ embeddings through moments or kernels \cite{muandet2016kernel,newey1987hypothesis}.

In this paper we are interested in such embeddings based on the optimal transport framework \cite{villani-2009}. Optimal transport gives rise to a natural distance on the space of probability measures via the Wasserstein distance, which quantifies the minimal work necessary to move one distribution into another using an optimal transport plan. Optimal transport has gained high interest in the machine learning community in recent years, for example for generative models, semi-supervised learning or imaging applications \cite{arjovsky2017wasserstein,rubner2000earth,solomon2014wasserstein}.

We use the optimal transport plan or map to build an embedding of probability measures into an $L^2$-space known as ``Linear Optimal Transportation'' (LOT) \cite{wei13,park18, aldroubi20, moosmueller20,gigli-2011} or ``Monge embedding'' \cite{merigot20}.  
LOT is a set of transformations based on optimal transport maps, which map a distribution $\mu$ to the optimal transport map that takes a fixed reference distribution $\sigma$ to $\mu$:
\begin{align}\label{intro:lot}
\mu \mapsto T_\sigma^{\mu}, & &\textnormal{ where } T_\sigma^{\mu}:= \arg\min_{T\in \Pi_\sigma^\mu} \int \|T(x)-x\|_2^2 d\sigma(x),
\end{align}
where $\Pi_\sigma^\mu$ denotes the set of measure preserving maps from $\sigma$ to $\mu$. Through the embedding \eqref{intro:lot}, the optimal transport map to a fixed reference $\sigma$ is used as a feature of $\mu$.
Note that LOT takes the nonlinear space of probability measures into a linear (but infinite dimensional) space of $L^2$ functions. This makes LOT particularly interesting as a feature space. Indeed, it has been demonstrated in various applications that within the LOT embedding space, classes of probability measures can be well separated with linear machine learning tools. The main applications concern signal and image classiﬁcation tasks \cite{kolouri-2016,park18,moosmueller20}, such as distinguishing facial expressions, separating healthy from cancerous tissue classes \cite{wang11}, and visualizing phenotypic differences between types of cells \cite{basu14}.

While the LOT embedding space is well studied in $1$-dimension \cite{park18}, since LOT can be thought of as a generalized CDF, many questions remain open in higher dimensions. This has to do with the fact that in higher dimensions, there is a large family of potential group actions that can be applied to a distribution $\mu_i$ (e.g., shifts, scalings, shearings, rotations), and $\Pi_\sigma^\mu$ contains a large number of measure preserving maps.

It has been shown that shifts and scalings behave well with respect to the LOT embedding \cite{aldroubi20,moosmueller20,park18}, meaning that two classes of probability measures obtained from scaling or shifting of a fixed measure can be linearly separated in the LOT embedding space. The reason lies in a property we refer to as the ``compatibility condition'', which is satisfied by shifts and scalings \cite{aldroubi20,moosmueller20}. This property describes an interplay between LOT and the pushforward operator, or in terms of Riemannian geometry, the invertability of the exponential map \cite{gigli-2011}.  Similarly, small perturbations of the distributions in these classes can still be linearly separated under certain minimal separation conditions \cite{moosmueller20}.

The contributions of this paper are threefold. We first describe conditions under which families of shearings satisfy the compatibility condition, enlarging the space of functions for which linear classification results hold in the LOT embedding space (\Cref{sec:shears}). The second contribution concerns binary classification results with pre-specified level of separation (\Cref{sec:delta_separation}). We give necessary bounds on the classes of probability measures to achieve linear separation in the embedding space with given separation level. The bounds are in terms of the parameters associated with the set of elementary transformations that are used to create the two classes. In the third part (\Cref{sec:multiple_ref}), we study embeddings using multiple references. Based on the set of elementary transformations, we quantify the number references needed to achieve a desired separation level in the embedding space. The paper closes with classification experiments on sheared distributions.

%\subsection{Main contributions}

%\subsection{Notation}

%\todo[inline]{Rewrite because this is from Caroline's paper}

%%%%%% PRELIMS %%%%%%%%%%%%%%%%%%
\section{Tools from optimal transport}
%\todo[inline]{Write later}

% \begin{enumerate}
%     \item introduce what OT is (smooth problem)
%     \item talk about LOT
%     \item compatibility condition and the results of \cite{aldroubi20}
%     \item classification, citing results from \cite{aldroubi20,park18,wei13,moosmueller20}, stress again the importance of the compatibility condition for these results
% \end{enumerate}

This paper deals with probability measures on $\RR^n$, i.e.\ with elements of the space $\mathcal{P}(\RR^n)$. We mostly deal with probability measures that have bounded second moment, and denote the respective space by $\mathcal{P}_2(\RR^n)$. The Lebesgue measure is denoted by $\lambda$.
% denote the probability measures in $\mathcal{P}(\RR^n)$ with bounded second moment.  In particular, if $\refe \in \mathcal{P}(\RR^n)$ satisfies
% \begin{equation*}
%     \int \Vert x \Vert_2^2 d\refe(x) < \infty.
% \end{equation*}

To any probability measure $\refe$, we assign the function space $L^2(\RR^n, \refe)$, which is equipped with the $L^2$-norm with respect to $\refe$: 
\begin{equation*}
    \Vert f \Vert_\refe^2 = \int \Vert f(x) \Vert_2^2 \, d\refe(x).
\end{equation*}

%% PART ON DENSITIES %% 

If a measure $\refe$ is absolutely continuous with respect to $\lambda$, written as $\refe \ll \lambda$, then there exists a density $f_\refe: \RR^n \to \RR$ such that
\begin{equation*}
    \refe(A) = \int_A f_\refe(x) d\lambda(x),
\end{equation*}
with $A\subseteq \RR^n$ measurable. For the most part, the probability measures we consider are absolutely continuous with respect to $\lambda$.

%% DO WE NEED THAT MEASURES ARE ABOLUTELY CONTINUOUS? - YES

A function $S: \RR^n \to \RR^n$ gives rise to the pushforward measure of $\refe$:
\begin{equation}\label{eq:pushforward}
    S_\sharp \refe(A) = \refe(S^{-1}(A)).
\end{equation}
where $A \subset \RR^n$ measurable. Throughout this paper, we denote the Jacobian of a function $S$ by $J_S$.

%% PUSHFORWARD IN TERMS OF DENSITIES NECESSARY?

% If $\refe \ll \lambda$, then in terms of densities, the pushforward relation $\nu(A) = \refe(S^{-1} (A))$ is given by
% \begin{equation*}
%     \int_{S^{-1}(A)} f_\refe (x) d\lambda(x) = \int_A f_\nu (y) d\lambda(y), \hspace{0.2cm} A \subseteq \RR^n \text{ measurable}.
% \end{equation*}
%We will denote the reference distribution by $\refe$ and the template distribution by $\temp$.

Given two measures, $\refe$ and $\temp$ there may exist many maps $S$ such that $S_{\sharp}\refe = \temp$. In order to find a unique map that pushes $\refe$ into $\temp$, the theory of optimal transport \cite{villani-2009} imposes an ``optimality condition'' on the map $S$. It has to minimize the overall cost of pushing $\refe$ into $\temp$, where cost is measured by a metric in the underlying space (here we use the Euclidean distance in $\RR^n$):
\begin{equation}\label{eq:cost}
    \int \Vert S(x) - x \Vert_2^2 d\refe(x).
\end{equation}
If such a cost minimizing function exists, then
\begin{equation}\label{eq:Wasserstein_distance}
    W_2(\refe, \temp)^2 = \min_{S: S_\sharp \refe = \temp} \int \Vert S(x) - x \Vert_2^2 d\refe(x).
\end{equation}
is called the \emph{Wasserstein-2} distance between $\refe$ and $\temp$. Note that the Wasserstein problem can also be considered for different norms (like $p$-norm) and on Riemannian manifolds \cite{brenier-1991,villani-2009,mccann-2001,ambrosio-2013}.

Brenier's theorem \cite{brenier-1991} states that under the assumption of $\refe \ll \lambda$, a unique map exists that pushes $\refe$ into $\temp$ and minimizes \eqref{eq:cost}. We call this map ``the optimal transport from $\refe$ to $\temp$'' and denote it by $T_{\refe}^{\temp}$.

We furthermore make use of the following result:
\begin{theorem}[Brenier's theorem \cite{brenier-1991}]\label{Brenier} 
If $\refe \ll \lambda$, the optimal transport map $T_{\refe}^{\temp}$ is uniquely defined as the gradient of a convex function $\varphi$, i.e.\ $T_{\refe}^{\temp}(x) = \nabla \varphi(x)$, where $\varphi$ is the unique convex function that satisfies $(\nabla \varphi)_\sharp \refe = \temp$. Uniqueness of $\varphi$ is up to an additive constant.
\end{theorem}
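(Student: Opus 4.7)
The plan is to go through the Kantorovich relaxation and convex duality, which is the standard route to Brenier's result. First I would relax the Monge problem \eqref{eq:cost} to the Kantorovich problem of minimizing $\int \|x-y\|_2^2\,d\pi(x,y)$ over couplings $\pi \in \Pi(\refe,\temp)$ with marginals $\refe$ and $\temp$. Since $\Pi(\refe,\temp)$ is tight (both marginals are fixed), Prokhorov's theorem gives weak compactness, and the cost $(x,y)\mapsto\|x-y\|_2^2$ is lower semicontinuous, so a minimizer $\pi^*$ exists. Expanding $\tfrac{1}{2}\|x-y\|_2^2 = \tfrac{1}{2}\|x\|_2^2 - \<x,y\> + \tfrac{1}{2}\|y\|_2^2$ and noting that the quadratic marginals contribute a constant depending only on $\refe,\temp\in\mathcal{P}_2(\RR^n)$, this is equivalent to \emph{maximizing} $\int \<x,y\>\,d\pi(x,y)$.

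Next I would invoke Kantorovich duality for the cost $-\<x,y\>$: there exist lower semicontinuous convex functions $\varphi:\RR^n\to\RR\cup\{+\infty\}$ and its Legendre conjugate $\varphi^*$ such that $\varphi(x)+\varphi^*(y)\ge \<x,y\>$ everywhere, with equality $\pi^*$-a.e. Equality in Young's inequality characterizes precisely the subdifferential relation $y\in\partial\varphi(x)$, so $\mathrm{supp}(\pi^*) \subseteq \{(x,y): y\in\partial\varphi(x)\}$; equivalently, the support is \emph{cyclically monotone}, which by Rockafellar's theorem forces it to lie in the subdifferential of a convex function. This is where $\refe\ll\lambda$ becomes crucial: a convex function is differentiable Lebesgue-a.e.\ (indeed, its non-differentiability set has Hausdorff dimension at most $n-1$), hence $\refe$-a.e., so $\partial\varphi(x)=\{\nabla\varphi(x)\}$ for $\refe$-a.e.\ $x$. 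Consequently $\pi^*$ is concentrated on the graph of $\nabla\varphi$, which means $\pi^* = (\mathrm{id},\nabla\varphi)_\sharp \refe$ and in particular $(\nabla\varphi)_\sharp \refe = \temp$. This exhibits $\nabla\varphi$ as an admissible map for the Monge problem achieving the Kantorovich optimum, so it is an optimal transport map and equals $T_\refe^\temp$.

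For uniqueness, I would argue in two stages. First, any two convex functions $\varphi_1,\varphi_2$ with $(\nabla\varphi_i)_\sharp \refe=\temp$ whose gradients are optimal transport maps must satisfy $\nabla\varphi_1=\nabla\varphi_2$ $\refe$-a.e.: if not, the coupling $\tfrac{1}{2}[(\mathrm{id},\nabla\varphi_1)_\sharp\refe + (\mathrm{id},\nabla\varphi_2)_\sharp\refe]$ would still be admissible, and by strict convexity of $\|\cdot\|_2^2$ together with the cyclical monotonicity characterization one derives a contradiction (equivalently, the optimal coupling supported on a graph is an extreme point). Second, two convex functions on $\RR^n$ with equal gradients $\refe$-a.e., together with $\refe\ll\lambda$, must differ by an additive constant: this follows because $\varphi_1-\varphi_2$ has zero gradient on a set of full Lebesgue measure in the interior of its domain, hence is constant on each connected component, and convexity plus full support arguments tie the components together.

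The main obstacle is the cyclic-monotonicity step: one must justify that \emph{any} optimal $\pi^*$ has cyclically monotone support (a consequence of local perturbations along finite cycles increasing cost), and then apply Rockafellar's theorem to realize this support as the subdifferential of a proper convex lower semicontinuous function. The rest—passing from the graph of $\partial\varphi$ to the graph of $\nabla\varphi$ and from the optimal Kantorovich plan to a Monge map—is essentially a consequence of $\refe\ll\lambda$ via Rademacher-type differentiability of convex functions, which is the one place the absolute continuity hypothesis is genuinely needed.
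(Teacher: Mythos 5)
This statement is Brenier's theorem, which the paper does not prove: it is quoted directly from the cited reference \cite{brenier-1991}, so there is no internal proof to compare against. Your outline is the standard textbook argument for exactly this result (Kantorovich relaxation and existence of an optimal plan by tightness and lower semicontinuity, reduction of the quadratic cost to maximizing $\int \langle x,y\rangle\,d\pi$, duality/cyclical monotonicity plus Rockafellar's theorem to place the support of the optimal plan inside $\partial\varphi$ for a convex $\varphi$, and then Lebesgue-a.e.\ differentiability of convex functions together with $\sigma \ll \lambda$ to pass from the subdifferential to the gradient and from plan to map), and it is essentially correct as a proof sketch, including the uniqueness-of-the-map argument via convex combinations of optimal plans. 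The one place you wave your hands is the final step, ``uniqueness of $\varphi$ up to an additive constant'': with gradients agreeing only $\sigma$-a.e., the difference $\varphi_1-\varphi_2$ is constant on each connected component of (the interior of the support of) $\sigma$, and if that support is disconnected the constants need not match, so ``full support arguments tie the components together'' is doing real work that requires a connectedness hypothesis; in full generality one only gets uniqueness of $\nabla\varphi$ $\sigma$-a.e., which is also the precise form in which the theorem is usually stated and the form actually used elsewhere in this paper.
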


\subsection{Linear optimal transport embeddings}
In this section, we introduce linear optimal transport embeddings, as proposed by \cite{wei13,park18,gigli-2011}. A fixed \emph{reference measure} $\refe$ gives rise to an embedding of $\wspaceR$ into $L^2(\RR^n,\refe)$ via the map 
\begin{equation}\label{eq:LOT}
    \temp \mapsto T_{\refe}^{\temp}.
\end{equation}
We denote this map by $F_{\refe}$, and call it ``LOT'' or ``LOT embedding'' (sometimes $F_{\refe}$ is called \emph{Monge map} as well \cite{merigot20}). 

Note that the nonlinear space of measures is mapped into a linear (infinite dimensional space) of $L^2$ functions. This makes LOT particularly interesting as a feature space, for example to use linear machine learning techniques to classify subsets of $\wspaceR$ \cite{moosmueller20,park18}. Other fields of application include the approximation the Wasserstein distance with a linear $L^2$-distance \cite{moosmueller20,merigot20}, and fast barycenter computation and clustering \cite{merigot20}.

From a theoretical point of view, the regularity of \eqref{eq:LOT} has been studied in \cite{merigot20,gigli-2011}. Indeed, the H\"older regularity of \eqref{eq:LOT} is not better than $1/2$. We also mention the results of \cite{berman20}, where a map related to LOT is analyzed, namely $\refe \mapsto T_{\refe}^{\temp}$. 

A central property in the study of LOT is the so-called \emph{compatibility condition} \cite{moosmueller20,aldroubi20}. It describes an interplay between LOT and the pushforward operator \eqref{eq:pushforward}.

\begin{definition}\label{def:compatibility}
Fix $\refe,\temp \in \wspaceR$ with $\refe \ll \lambda$. The LOT embedding $F_{\refe}$ is called \emph{compatible} with the $\temp$-pushforward of a function $S \in L^2(\RR^n,\temp)$ if
\begin{equation*}
    F_{\refe}(S_{\sharp}\temp) = S\circ F_{\refe}(\temp).
\end{equation*}
\end{definition}
Note that the compatibility condition of \Cref{def:compatibility} can also be written as 
\begin{equation*}%\label{eq:compatibility}
    T_{\refe}^{S_{\sharp}\temp} = S\circ T_{\refe}^{\temp}.
\end{equation*}
Considering the Riemannian manifold $(\wspaceR,W_2)$ with exponential map (the pushforward operator), LOT can be viewed as its right-inverse. % not sure if its necessary to cite \cite{gigli-2011}
For $\refe=\temp$, the compatibility condition forces LOT to be a left-inverse as well.

Under the assumption of the compatibility condition, a series of interesting results can be derived. 
First, the Wasserstein-2 distance can be computed from the linear $L^2$-distance,
\begin{equation}\label{eq:linear_W2}
    W_{2,\refe}^{\operatorname{LOT}}(\temp_1,\temp_2) := \Vert F_\refe(\temp_1) - F_\refe(\temp_2) \Vert_\refe 
    %= \Vert T_\refe^{\temp_1} - T_\refe^{\temp_2} \Vert_\refe = \bigg( \int_{\RR^n} \Vert T_\refe^{\temp_1}(x) - T_\refe^{\temp_2}(x) \Vert_2^2 d\refe(x) \bigg)^{1/2}.
\end{equation}
if $\temp_1,\temp_2$ have been obtained from a fixed \emph{template} $\temp$ via pushforwards of two functions $S_1,S_2$ for which the compatibility condition holds \cite{moosmueller20}, i.e.\ in this case
\begin{equation}
W_2({S_1}_{\sharp}\temp,{S_2}_{\sharp}\temp)=W_{2,\refe}^{\operatorname{LOT}}({S_1}_{\sharp}\temp,{S_2}_{\sharp}\temp).   \end{equation}
This is of particular interest when trying to compute the pairwise distance between many measures $\{\temp_i\}_{i=1,\ldots,N}$, when each $\mu_i$ is obtained from a fixed template $\temp$ via the process $\temp_i={S_i}_{\sharp}\temp$ with compatible functions $\{S_i\}_{i=1,\ldots, N}$ (\cite{aldroubi20} calls such a process an ``algebraic generative model''). In this setting, one can compute the $N$ transport maps $T_{\refe}^{\temp_i}$, and then compute ${N\choose 2}$ linear distances via \eqref{eq:linear_W2}, which is computationally much cheaper (especially for large $N$), than computing ${N \choose 2}$ transport maps (Wasserstein-2 distances). These results also generalize to when the compatibility condition is only satisfied up to an error $\varepsilon>0$ \cite{moosmueller20}. Then the linear distance \eqref{eq:linear_W2} approximates $W_2$ up to an error of order $\varepsilon^{1/2}$. Other approximation results (that do not need the compatibility condition) can be found in \cite{merigot20}.

Second, under the assumption of the compatibility condition, convexity is preserved under LOT \cite{aldroubi20,moosmueller20}. In particular, if $\HH \subseteq L^2(\RR^n,\refe)$ is a set of convex and compatible functions, then $F_{\refe}(\HH)$ is also convex (a similar results holds for almost convex sets \cite{moosmueller20}). The preservation of convexity is crucial to deduce linear separability results in the embedding space through the Hahn-Banach theorem (e.g.\ to apply LOT in supervised learning). Indeed it has been shown that under the assumption of the compatibility condition, binary classification of sets of probability measures can be achieved in the LOT embedding space with linear methods, i.e.\ in the embedding space, a separating hyperplane can be found \cite{moosmueller20,park18}.

Yet the compatibility condition (\Cref{def:compatibility}) is very restrictive, and cannot be expected to hold for all $S$. As of now, it is known that shifts and scalings, i.e.\ functions of the form $S(x)=cx+b$ with $c>0$ and $b\in \RR^n$, satisfy \Cref{def:compatibility} for all choices of $\refe,\temp$ \cite{park18,aldroubi20,moosmueller20}. \cite{aldroubi20} also shows that for fixed $\refe$, for the compatibility condition to hold for all $\temp$, $S$ has to be a shift/scaling.

It is our aim to extend the set of compatible functions $S$ beyond shifts and scalings to make LOT applicable to a broader range of applications. In particular we study (generalized) affine transformations. Note that because of the result in \cite{aldroubi20}, to increase the set of compatible functions, the reference $\refe$ and the template $\temp$ can no longer be chosen independently. In the next section we establish necessary relationships between $\refe,\temp$ and $S$ for \Cref{def:compatibility} to hold.

% Given a reference distribution $\refe$, we define the Linear Optimal Transport (LOT) embedding as $F_\refe: \mathcal{P}_2(\RR^n) \to L^2(\RR^n, \refe)$, where $F_\refe(\temp) = T_\refe^\temp$.  The LOT distance with respect to $\refe$ between measures $\temp_1$ and $\temp_2$ is defined as
% \begin{equation*}
%     \Vert F_\refe(\temp_1) - F_\refe(\temp_2) \Vert_\refe = \Vert T_\refe^{\temp_1} - T_\refe^{\temp_2} \Vert_\refe = \bigg( \int_{\RR^n} \Vert T_\refe^{\temp_1}(x) - T_\refe^{\temp_2}(x) \Vert_2^2 d\refe(x) \bigg)^{1/2}.
% \end{equation*}

%%%%%%%% RESULT ON COMPATIBILITY CONDITION %%%%%%%%%%%%%%%

\section{Compatibility condition for affine transformations}\label{sec:shears}

In this section we study the conditions under which affine transformations $S(x) = Ax+b$ (and generalizations of such transformations) satisfy the compatibility condition (\Cref{def:compatibility}). Our results show that fixing the reference $\refe$ and template $\temp$ generates necessary conditions for maps $S$ to satisfy the compatibility conditions with respect to $\refe$ and $\temp$.  Conversely, fixing the template $\temp$ and the transformations $S$ generates necessary conditions that references $\refe$ must satisfy in order for the compatibility condition to hold.  
These results strongly depend on the following theorem.

%%% theorem statement %%%%
\begin{theorem}[Informal Statement of Theorem \ref{main}]\label{mainTheorem}
Let $\refe, \temp \in \mathcal{P}_2(\RR^n)$ and let $\refe \ll \lambda$. Let $S \in C^1(\RR^n,\RR^n)$ such that $S = \nabla \varphi$ for some twice differentiable function $\varphi$. We also assume that $S$ satisfies the compatibility condition (\Cref{def:compatibility}). Then the Jacobian of $S$, $J_S$, is symmetric positive definite and shares the same eigenspaces as the Jacobians of $T_\refe^\temp$ and $T_\refe^{S_\sharp \temp}$.
\end{theorem}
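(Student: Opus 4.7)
The plan is to turn every map in sight into a gradient and then differentiate the compatibility relation. First, by Brenier's theorem (\Cref{Brenier}) and the assumption $\refe \ll \lambda$, we may write $T_\refe^\temp = \nabla \psi$ and $T_\refe^{S_\sharp \temp} = \nabla \tilde\psi$ for convex potentials $\psi, \tilde\psi$. Hence their Jacobians
\[
B(x) := J_{T_\refe^\temp}(x) = \nabla^2 \psi(x), \qquad A(x) := J_{T_\refe^{S_\sharp \temp}}(x) = \nabla^2 \tilde\psi(x)
\]
are symmetric and positive semi-definite at every point of twice-differentiability. The hypothesis $S = \nabla \varphi$ with $\varphi$ twice differentiable likewise forces $J_S = \nabla^2 \varphi$ to be symmetric by equality of mixed partials. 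This is the first, purely structural, step.

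Second, I would write the compatibility condition pointwise as $\nabla \tilde\psi(x) = \nabla\varphi(\nabla\psi(x))$ and differentiate by the chain rule to obtain
\[
A(x) \;=\; \nabla^2\varphi\bigl(\nabla\psi(x)\bigr)\cdot\nabla^2\psi(x) \;=\; C(x)\, B(x),
\]
where I set $C(x) := J_S\bigl(T_\refe^\temp(x)\bigr)$. By step one, all three matrices $A,B,C$ are symmetric.

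Third, I exploit the symmetry of $A=CB$: transposing gives $A = A^\top = B^\top C^\top = BC$, so $CB = BC$. Two symmetric matrices that commute are simultaneously orthogonally diagonalizable, hence $B$ and $C$ share an orthonormal eigenbasis, i.e.\ share eigenspaces. Since $A = CB$ is the product of two matrices that are diagonal in this common basis, $A$ is diagonal there too, so all three Jacobians share the same eigenspaces. This gives the eigenspace part of the conclusion.

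Finally, to upgrade $C = J_S$ from symmetric to symmetric positive definite, I would pass to the common eigenbasis and use that in this basis the equation $A = CB$ reads as a system of scalar identities $a_i = c_i b_i$ for the eigenvalues. Provided $B$ is invertible at the point in question (so that $b_i>0$), which is the standard nondegeneracy statement for a Brenier map whose source is regular enough, and provided $a_i \ge 0$ with strict positivity inherited from the corresponding nondegeneracy of $T_\refe^{S_\sharp\temp}$, one reads off $c_i = a_i/b_i > 0$, yielding positive definiteness of $J_S$. The main obstacle is precisely this last step: the Hessians of Brenier potentials are only positive semi-definite in general, so the formal theorem will presumably need hypotheses (strict convexity of the potentials, or sufficient regularity/support of $\refe$, $\temp$, and $S_\sharp\temp$) that make $A$ and $B$ pointwise invertible; identifying the minimal such hypotheses, and verifying that the chain-rule differentiation is legitimate at the relevant points, is the delicate part of the argument.
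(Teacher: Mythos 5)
Your proposal follows essentially the same route as the paper's proof of Theorem \ref{main}: Brenier's theorem plus Clairaut's theorem give symmetric Hessians, differentiating the compatibility condition yields $J_{T_\refe^{S_\sharp \temp}}(x) = J_S\bigl(T_\refe^\temp(x)\bigr)\, J_{T_\refe^\temp}(x)$, symmetry of all three matrices forces the factors to commute and hence be simultaneously diagonalizable, and the resulting eigenvalue identities give positivity of the eigenvalues of $J_S$. The only divergence is in the last step, where you correctly note that Hessians of Brenier potentials are a priori only positive semi-definite and that strict positivity needs an extra nondegeneracy hypothesis, whereas the paper's proof simply asserts positive definiteness of $\nabla^2\phi$ and $\nabla^2\gamma$; your caveat points to an implicit assumption in the paper rather than a gap in your own argument.
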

\begin{proof}
The proof can be found in \Cref{proofOfMain}.
\end{proof}

We get the following corollary.

\begin{corollary}
Let $\refe, \mu \in \mathcal{P}_2(\RR^n)$ and let $\refe \ll \lambda$.  If $S \in C^1(\RR^n, \RR^n)$ such that $S = \nabla \varphi$ for some twice differentiable $\varphi$ and $S$ satisfies the compatibility condition for $\refe$ and $\temp$, then $S$ is an optimal transport map.
\end{corollary}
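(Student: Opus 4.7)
The plan is to reduce the corollary to Brenier's theorem by using Theorem~\ref{mainTheorem} to upgrade the bare assumption ``$S = \nabla\varphi$ for some twice differentiable $\varphi$'' into ``$S = \nabla\varphi$ for a \emph{convex} $\varphi$''. The natural interpretation of the conclusion is that $S$ is the optimal transport map from $\temp$ to $S_\sharp \temp$; once convexity of $\varphi$ is established, this follows from the sufficiency direction of Brenier's theorem (\Cref{Brenier}), which identifies gradients of convex functions with optimal transport maps between the source measure and its pushforward.

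The first step would be to observe that, since $S = \nabla\varphi$ with $\varphi$ twice differentiable, the Jacobian $J_S$ coincides with the Hessian $\nabla^2 \varphi$. By hypothesis $S$ satisfies the compatibility condition with respect to $\refe$ and $\temp$ and $\refe \ll \lambda$, so \Cref{mainTheorem} applies and gives that $J_S$ is symmetric positive definite pointwise. Consequently $\nabla^2 \varphi \succ 0$ everywhere, which yields that $\varphi$ is (strictly) convex on $\RR^n$.

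The second step would be to apply Brenier's theorem. Since $\varphi$ is convex and $S = \nabla\varphi$ pushes $\temp$ forward to $S_\sharp \temp$, the characterization of optimal transport maps as gradients of convex functions implies that $S$ is the optimal transport map from $\temp$ to $S_\sharp \temp$. Note that one does not need absolute continuity of $\temp$ for this sufficiency direction; absolute continuity is only required for the uniqueness/existence direction.

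I do not expect a real obstacle here: the content of the corollary is essentially packaged inside \Cref{mainTheorem}, whose positive-definiteness statement is precisely what is needed to extract convexity of the potential $\varphi$. The only mild subtlety is making explicit between which two measures $S$ is the optimal transport, and ensuring that we invoke the correct direction of Brenier's theorem so as not to need hypotheses beyond those already assumed (in particular, no absolute continuity of $\temp$ or of $S_\sharp \temp$ is needed for the sufficiency direction).
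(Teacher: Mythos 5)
Your proposal is correct and follows essentially the same route as the paper: invoke \Cref{mainTheorem} (via the compatibility condition and $\refe \ll \lambda$) to get that $J_S = \nabla^2\varphi$ is symmetric positive definite, conclude that $\varphi$ is convex, and then apply Brenier's characterization to deduce that $S$ is the optimal transport map (from $\temp$ to $S_\sharp\temp$). Your added precision about which pair of measures is involved and about not needing absolute continuity of $\temp$ for the sufficiency direction is a welcome clarification but does not change the argument.
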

\begin{proof}
In particular, note that \Cref{mainTheorem} states that if $S = \nabla \varphi$ for some $\varphi$; and if the compatibility condition holds, then $\nabla^2 \varphi$ is positive definite.  Thus, $\varphi$ must have been convex.  In light of Brenier's theorem \Cref{Brenier}, $S$ must be an optimal transport map.  Informally, \Cref{mainTheorem} above states that this optimal transport map $S$ must be transporting mass in the same directions (eigenspaces) as $T_\refe^\mu$.
\end{proof}

We use \Cref{mainTheorem} above to extend a form of LOT isometry to the case when $S$ is an affine transformation.  The only caveat for our extension is that the orthonormal basis on which we shear must be constant.  The relevant function class for this setting is given in the following definition.

\begin{definition}\label{def:shears}
Given an orthogonal matrix $P \in \RR^{n \times n}$, define the \emph{constant orthonormal basis shears} as the class of maps
\begin{align*}
    \mathcal{F}(P) = \Bigg\{ x \mapsto \Tilde{P}^\top \begin{bmatrix} f_1( (\Tilde{P}x)_1) \\ f_2 ( (\Tilde{P}x)_2) \\ \vdots \\ f_n ( (\Tilde{P}x)_n)) \end{bmatrix} + b : \substack{f_j: \RR \to \RR \text{ is monotonically} \\ \text{increasing and differentiable} \\
    \text{and $b \in \RR^n$}} \Bigg\},
\end{align*}
where $\Tilde{P}$ is a row-permutation of the orthogonal matrix $P$.
\end{definition}

Note that affine transformations $S(x)=Ax+b$ with $A=P^TD P$ and $d_i>0,i=1,\ldots, n$ (i.e.\ symmetric positive definite matrices diagonalizable by $P$), are elements of $\mathcal{F}(P)$. Indeed, choose $f_i(y) = d_iy, i=1,\ldots,n$.

% It was shown in \cite{moosmueller20} that if a map $S: \RR^n \to \RR^n$ satisfies the compatibility condition, \cref{eq:compatibility}, then
% \begin{equation*}
%     W_2(S_\sharp \temp, \temp) = \Vert F_\refe(S_\sharp \temp) - F_\refe(\temp) \Vert_\refe.
% \end{equation*}

%%% NOT SURE IF WE NEED TO STATE THIS %%%%%
% In general, however, \cite{aldroubi20} showed the following result (restated to conform to our notation):
% \begin{theorem}
% Let $n \geq 2$ and $\mathcal{H} = \{h: \RR^n \to \RR^n \vert h \text{ is differentiable}\}$ be a set of transformations.  If for some fixed reference distribution $\refe$ the compatibility condition $h \circ T_\refe^\temp = T_\refe^{h_\sharp \temp}$ holds for all $h \in \mathcal{H}$, then $\mathcal{H}$ only contains translations and isotropic scalings.
% \end{theorem}
% In particular, for a fixed reference $\refe$ and some transformation $S$, if we want compatibility to hold for all template distributions, then our transformation $S$ must have been a shift and/or isotropic scaling.  As the set of shifts and scalings is rather restrictive for applications, it is our aim to extend the set of functions $S$ that satisfy the compatibility condition, by understanding the relationship between reference distributions, template distributions, and $S$.

% Notice that shears of the form $Ax + b$ for a symmetric and positive definite matrix $A \in \RR^{n \times n}$ and a shift $b \in \RR^n$ are also part of $\mathcal{F}(P)$ if the spectral decomposition of $A$ has $P$ as the orthogonal matrix.

Given a fixed template distribution $\temp$, we show that demanding that the compatibility condition holds (under suitable conditions), if we fix either the reference distribution $\refe$ or the set of transformations, then the other (either the reference or transformations) can be fully characterized.

{\bf Fixed Reference and Template: }
Assume we fix the template distribution $\temp$ and reference distribution $\refe$.  If the Jacobian of $T_\refe^\temp(x)$ has spectral decomposition $P^\top D(x) P$ for a constant orthogonal matrix $P$, then the set of compatible transformations can be fully characterized: 

\begin{theorem}[Conditions on transformations]\label{converseShearTheorem}
Let $\refe, \temp \in \mathcal{P}_2(\RR^n)$ with $\refe \ll \lambda$. If the Jacobian of $T_\refe^\temp$ has a constant orthonormal basis given by an orthogonal matrix $P$ (i.e. $J_{T_\refe^\temp}(x) = P^\top D(x) P$), then $\mathcal{F}(P)$ is the set of transformations for which the compatibility condition (\Cref{def:compatibility}) holds.
\end{theorem}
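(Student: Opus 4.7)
The plan is to establish the set equality $\mathcal{F}(P) = \{S : S \text{ satisfies compatibility w.r.t.\ } \refe, \temp\}$ by proving the two inclusions separately, working under the standing regularity hypothesis of Theorem~\ref{mainTheorem} (i.e.\ $S \in C^1$ with $S = \nabla \varphi$ for a twice-differentiable $\varphi$).

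For the forward direction $\mathcal{F}(P) \subseteq \{S : \text{compatible}\}$, take $S(x) = \Tilde{P}^\top (f_1((\Tilde P x)_1), \ldots, f_n((\Tilde P x)_n))^\top + b$ in $\mathcal{F}(P)$; since a row-permutation just relabels the $f_i$ and reorders the eigenvalues of $D(x)$ accordingly, I may assume $\Tilde P = P$. First I would exhibit $S$ as a gradient of a convex function via the explicit potential $\phi(x) = \sum_i F_i((Px)_i) + b^\top x$ with $F_i' = f_i$, which is convex because each $f_i$ is monotonically increasing. Next, compute $J_S(x) = P^\top \diag(f_1'((Px)_1), \ldots, f_n'((Px)_n)) P$, which diagonalizes in the same basis $P$ as $J_{T_\refe^\temp}(x) = P^\top D(x) P$. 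By the chain rule, $J_{S \circ T_\refe^\temp}(x) = P^\top \Tilde D(x) P$, the product of two diagonal matrices in the same basis with positive entries, hence symmetric positive definite. Thus $S \circ T_\refe^\temp$ is itself the gradient of a strictly convex function, and since it pushes $\refe$ forward to $(S \circ T_\refe^\temp)_\sharp \refe = S_\sharp \temp$, Brenier's theorem (Theorem~\ref{Brenier}) forces $S \circ T_\refe^\temp = T_\refe^{S_\sharp \temp}$, which is precisely the compatibility condition.

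For the reverse inclusion, suppose $S$ satisfies compatibility. Theorem~\ref{mainTheorem} gives $J_S(x)$ SPD with the same eigenspaces as $J_{T_\refe^\temp}(x) = P^\top D(x) P$, so $J_S(x) = \Tilde P^\top D_S(x) \Tilde P$ for some row-permutation $\Tilde P$ of $P$ and a positive diagonal $D_S(x)$ (the permutation absorbs any matching of eigenspaces to rows of $P$). The key step is a change of variables: set $y = \Tilde P x$ and $h(y) := \Tilde P S(\Tilde P^\top y)$. The chain rule gives $J_h(y) = \Tilde P J_S(\Tilde P^\top y) \Tilde P^\top = D_S(\Tilde P^\top y)$, which is diagonal. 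Hence $\partial_j h_i \equiv 0$ for $i \neq j$, so each $h_i$ depends only on $y_i$, and integrating yields $h_i(y) = f_i(y_i) + c_i$ for differentiable $f_i : \RR \to \RR$ with $f_i' > 0$ (positivity of $D_S$). Undoing the change of variables, $S(x) = \Tilde P^\top (f_1((\Tilde P x)_1), \ldots, f_n((\Tilde P x)_n))^\top + b$ with $b = \Tilde P^\top (c_1, \ldots, c_n)^\top$, exhibiting $S \in \mathcal{F}(P)$.

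The hard part will be cleanly leveraging the "shared eigenspaces" conclusion of Theorem~\ref{mainTheorem}: one needs to argue that this forces $J_S(x)$ to diagonalize in the basis $P$, up to a row-permutation, and this is precisely the reason the definition of $\mathcal{F}(P)$ is stated with the flexible matrix $\Tilde P$. A secondary subtlety in the forward direction is the standard bridge between "everywhere SPD Jacobian" and "gradient of a strictly convex function" needed to invoke Brenier's uniqueness; here this is handled directly by exhibiting an explicit convex potential both for $S$ and for $S \circ T_\refe^\temp$, rather than appealing to a general integrability argument.
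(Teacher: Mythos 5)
Your proof of the substantive inclusion --- compatibility implies $S \in \mathcal{F}(P)$ --- is essentially the paper's argument: \Cref{main} gives that $J_S(x)$ is symmetric positive definite and commutes with $J_{T_\refe^\temp}(x) = P^\top D(x)P$, the simultaneous-diagonalization statement (\Cref{q=p}) yields $J_S(x) = \Tilde{P}^\top D_S(x)\Tilde{P}$ for a row-permutation $\Tilde{P}$ of $P$, and your change of variables $h(y) = \Tilde{P} S(\Tilde{P}^\top y)$ with diagonal Jacobian, integrated componentwise into increasing single-variable functions, is precisely \Cref{lemmaForShear}. The genuine difference is that you also prove the forward inclusion $\mathcal{F}(P)\subseteq\{\text{compatible maps}\}$: you exhibit a convex potential for $S$, observe that $J_{S\circ T_\refe^\temp}(x) = P^\top \Delta(x) P$ with $\Delta(x)$ diagonal and nonnegative, hence $S\circ T_\refe^\temp$ is the gradient of a convex function pushing $\refe$ to $S_\sharp\temp$, and conclude $S\circ T_\refe^\temp = T_\refe^{S_\sharp\temp}$ from the uniqueness in \Cref{Brenier}. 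The paper's written proof of \Cref{converseShearTheorem} establishes only the reverse inclusion and leaves this direction implicit (it is verified concretely only for Gaussians in \Cref{example11}), so your version is the more complete proof of the claimed set equality. Two minor caveats: despite your closing remark, you do not actually write an explicit potential for $S\circ T_\refe^\temp$ but infer its existence from the everywhere-symmetric Jacobian, i.e.\ exactly the integrability (Poincar\'e) argument you said you were avoiding --- this is fine, but it uses differentiability of $T_\refe^\temp$, which the theorem implicitly assumes by speaking of its Jacobian; and strict positive definiteness of $J_{S\circ T_\refe^\temp}$ would require $f_i'>0$ and $D(x)\succ 0$, whereas ``monotonically increasing'' only gives $f_i'\ge 0$ --- the same looseness appears in the paper (\Cref{lemmaForShear} assumes $D(x)$ positive), and since Brenier uniqueness needs only convexity of the potential, positive semidefiniteness suffices for your step.
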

\begin{proof}
The proof of the theorem can be found in \cref{ProofConverseShearTheorem}.
\end{proof}

%%%%%%%% COMMENTS FOR LATER %%%%%%%%%%%%%%%%%%
% \todo[inline]{Future: Fix $\refe,\temp$, without any assumptions on Jacobian. What can we say about the elementary transformations?}
% \todo[inline]{Future: If $\refe=\lambda$, does the Jacobi thing get simpler?}

\begin{example}[Gaussians]
To illustrate \Cref{converseShearTheorem}, we provide a simple example with Gaussians. If both $\refe$ and $\temp$ are Gaussian distributions, for example $\mathcal{N}(m_1,I)$ and $\mathcal{N}(m_2,\Sigma_2)$, then 
\begin{equation*}
    T_{\refe}^{\temp}(x) = m_2 + \Sigma_2(x-m_1), 
\end{equation*}
and $J_{T_{\refe}^{\temp}}(x)=\Sigma_2$.  If $\Sigma_2$ is positive definite, then it can be decomposed as $P^TDP$. Therefore, \Cref{converseShearTheorem} allows all generalized shears in \Cref{def:shears} that point in the same direction as $\Sigma_2$.
\end{example}

{\bf Fixed Shear and Template:} Now we fix the transformation to be a type of generalized shear and the template distribution $\temp$, and characterize the set of reference distributions such that compatibility condition holds.

\begin{theorem}[Conditions on reference distribution]\label{shearTheorem}
Let $P$ be an orthogonal matrix, let $S(x) = P^\top g(Px) + b$ for $g(z) = \begin{bmatrix}
g_1(z_1) & \dotsc & g_n(z_n) \end{bmatrix} : \RR^n \to \RR^n$ where $g_j: \RR \to \RR$ is differentiable and $b \in \RR^n$, and let $\temp \in \mathcal{P}_2(\RR^n)$ be a fixed template distribution with $\temp \ll \lambda$. Then $\Sigma = \{f_\sharp \temp : f \in \mathcal{F}(P) \}$ is the set of reference distributions such that the compatibility condition (\Cref{def:compatibility}) holds.
\end{theorem}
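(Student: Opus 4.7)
The plan is to prove the set equality by two inclusions: the forward direction $\{f_\sharp \temp : f \in \mathcal{F}(P)\} \subseteq $ (compatible references) follows from a direct verification using Brenier's theorem, while the reverse inclusion uses \Cref{mainTheorem} to pin down the structure of $T_\refe^\temp$. Note that for any compatible reference to exist, the corollary following \Cref{mainTheorem} forces $J_S$ to be symmetric positive definite, which requires each $g_j$ to be strictly monotone increasing; I will thus implicitly treat $S$ as an element of $\mathcal{F}(P)$ throughout.

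For the forward inclusion, fix $f \in \mathcal{F}(P)$. After absorbing the row-permutation in \Cref{def:shears} into the labeling of the scalar components, write $f(y) = P^\top h(Py) + b'$ with $h(z) = [h_j(z_j)]_j$ and each $h_j$ strictly monotone increasing and differentiable. Then $f = \nabla \varphi_f$ for the convex potential $\varphi_f(y) = \sum_j H_j((Py)_j) + b' \cdot y$, where $H_j$ is an antiderivative of $h_j$. By Brenier's theorem, $f$ is the optimal transport from $\temp$ to $\refe := f_\sharp \temp$, so $T_\refe^\temp = f^{-1}$. A direct composition yields
\begin{equation*}
    S \circ f^{-1}(y) = P^\top \bigl[ g_j\bigl( h_j^{-1}((P(y-b'))_j) \bigr) \bigr]_j + b,
\end{equation*}
which is the gradient of the potential $\psi(y) := \sum_j F_j((P(y-b'))_j) + b \cdot y$ with $F_j' = g_j \circ h_j^{-1}$ monotone increasing (as a composition of monotone increasing maps), hence $\psi$ is convex. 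Since $(S \circ f^{-1})_\sharp \refe = S_\sharp \temp$, Brenier uniqueness forces $T_\refe^{S_\sharp \temp} = S \circ f^{-1} = S \circ T_\refe^\temp$, i.e.\ the compatibility condition.

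For the reverse inclusion, suppose $\refe$ is such that the compatibility condition holds. Since $S = \nabla(\sum_j G_j((Px)_j) + b \cdot x)$ for antiderivatives $G_j$ of $g_j$, \Cref{mainTheorem} applies and forces $J_{T_\refe^\temp}(x)$ to share eigenspaces with $J_S(x) = P^\top \diag(g_j'((Px)_j)) P$. Generically the eigenvectors of $J_S(x)$ are the columns of $P^\top$, giving $J_{T_\refe^\temp}(x) = P^\top D(x) P$ for some positive diagonal $D(x)$. Writing $T_\refe^\temp = \nabla \varphi$ with $\varphi$ convex and setting $\tilde \varphi(y) := \varphi(P^\top y)$, one computes $\nabla^2 \tilde \varphi(y) = D(P^\top y)$, diagonal everywhere; the vanishing of mixed partials $\partial_i \partial_j \tilde \varphi \equiv 0$ for $i \ne j$ forces separability $\tilde \varphi(y) = \sum_j H_j(y_j) + c$ with each $H_j$ convex. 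Unwinding, $T_\refe^\temp(x) = P^\top [H_j'((Px)_j)]_j \in \mathcal{F}(P)$, and its inverse $f(v) := P^\top [(H_j')^{-1}((Pv)_j)]_j$ is also in $\mathcal{F}(P)$. Since $T_\refe^\temp$ pushes $\refe$ onto $\temp$, we conclude $\refe = f_\sharp \temp$.

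The main technical obstacle is the eigenspace-sharing step: at points $x$ where some of the values $g_j'((Px)_j)$ coincide, the eigenspaces of $J_S(x)$ are higher-dimensional and \Cref{mainTheorem} only forces $J_{T_\refe^\temp}(x)$ to be block-diagonal (rather than fully diagonal) in the $P$-basis. To close this gap one restricts to the open set where the $g_j'((Px)_j)$ are pairwise distinct---typically dense when the $g_j$ are genuinely different---and extends diagonality of $\nabla^2 \tilde \varphi$ to the full domain by continuity. The remaining integration and inversion steps are then routine.
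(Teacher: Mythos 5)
Your proposal is correct and, on the necessity direction, follows essentially the paper's own route: compatibility plus \Cref{main} forces $J_{T_\refe^\temp}$ to be simultaneously diagonalizable with the Jacobian of $S$ (the paper invokes \Cref{q=p} here), the resulting constant eigenbasis $P$ forces the Brenier potential of $T_\refe^\temp$ to split into single-variable increasing pieces (your vanishing-mixed-partials computation for $\tilde\varphi(y)=\varphi(P^\top y)$ is exactly the content of \Cref{lemmaForShear}), and inverting yields $\refe=f_\sharp\temp$ with $f\in\mathcal{F}(P)$. What you add beyond the paper is the sufficiency direction: you verify directly that every $f_\sharp\temp$ with $f\in\mathcal{F}(P)$ is in fact compatible, by showing $S\circ f^{-1}$ is the gradient of a convex separable potential and invoking Brenier uniqueness to identify it with $T_\refe^{S_\sharp\temp}$; the paper's proof only derives the necessary form of $\refe$ and leaves this converse implicit (it is checked concretely only in the Gaussian affine case, \Cref{example11}), so your inclusion of it makes the claimed set equality genuinely two-sided. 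Your opening observation that compatibility forces each $g_j$ to be increasing, so that $S$ may be treated as an element of $\mathcal{F}(P)$, is also the right reading of the hypotheses, since otherwise no compatible reference exists. The repeated-eigenvalue caveat you raise is real, but note it is a gap shared with the paper: the published proof asserts ``without loss of generality'' that the common eigenbasis is a row permutation of $P$, which only follows where the eigenvalues of $J_S$ are distinct; your restrict-to-the-generic-set-and-extend-by-continuity patch covers the typical case, while in fully degenerate situations (e.g.\ all $g_j$ the identity, so $S$ is a pure shift and every absolutely continuous $\refe$ is compatible) the stated set equality actually fails, so some nondegeneracy hypothesis is needed by both arguments. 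One small imprecision: \Cref{main} gives eigenspace sharing with $J_S$ evaluated at $T_\refe^\temp(x)$ rather than at $x$; this does not affect your conclusion, since the eigenbasis of $J_S$ is given by $P$ at every point.
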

\begin{proof}
The proof can be found in \Cref{ProofshearTheorem}.
\end{proof}

In \Cref{shearTheorem}, note that the reference distributions in $\Sigma$ end up being absolutely continuous since they are the smooth pushforward of an absolutely continuous measure.  Additionally, we get the following corollary.

\begin{corollary}
Given the family of transformations of the form $S(x)$ from \Cref{shearTheorem} above, the set of reference distributions such that the compatibility condition holds for all of the transformations simultaneously is $\Sigma = \{f_\sharp \temp : f \in \mathcal{F}(P) \}$.
\end{corollary}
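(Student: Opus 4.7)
The plan is to derive this corollary almost directly from \Cref{shearTheorem} by exploiting the fact that the characterization $\Sigma = \{f_\sharp \temp : f \in \mathcal{F}(P)\}$ depends only on the orthogonal matrix $P$ and on the fixed template $\temp$, and not on the particular choice of the scalar functions $g_j$ or the translation $b$ that parameterize an individual $S(x) = P^\top g(Px) + b$ in the family. Once this independence is made explicit, the corollary reduces to a short intersection argument.

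For one inclusion, I would fix $\refe \in \Sigma$ and any $S$ from the family; applying \Cref{shearTheorem} with that particular $S$ shows that the set of references compatible with it is exactly $\Sigma$, so in particular $\refe$ works for this $S$. Letting $S$ range over the whole family, this gives that $\Sigma$ is contained in the set of references simultaneously compatible with every transformation. For the reverse inclusion, I would take any $\refe$ that is simultaneously compatible with every $S$ in the family, pick any single one of these transformations, and apply \Cref{shearTheorem} to it alone to conclude $\refe \in \Sigma$.

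The only subtle step is to justify that $\Sigma$ in \Cref{shearTheorem} really does not depend on which $S$ in the family is chosen. I would point out that this is already visible from the statement itself, and that the proof (deferred to \Cref{ProofshearTheorem}) characterizes admissible references through the eigenspace structure of $J_S$, which by \Cref{def:shears} is governed entirely by $P$ and not by the diagonal entries $g_j'$ or the offset $b$. There is no genuine obstacle here; the corollary is essentially a reformulation of \Cref{shearTheorem} emphasizing that the admissible reference set is uniform across the whole family of shears associated to a given orthogonal matrix $P$.
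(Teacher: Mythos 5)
Your proposal is correct and follows essentially the same route as the paper: the paper's proof likewise rests on observing that the set $\Sigma$ in \Cref{shearTheorem} depends only on $P$ (and the template $\temp$), not on the functions $g_j$ or the shift, so the same $\Sigma$ serves every transformation in the family simultaneously. Your explicit two-inclusion intersection argument merely spells out what the paper leaves implicit.
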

\begin{proof}
Inspecting the proof of \Cref{shearTheorem}, we see that the set of reference distributions $\Sigma$ does not depend on the choice of functions $g_j: \RR \to \RR$ but rather only on $P$.  
\end{proof}

A corollary of the theorems above is when the transformations used are constant shears.
\begin{corollary}
Consider an affine transformation $S(x) = Ax+b$, where $A$ is symmetric positive definite with orthonormal basis given by an orthogonal matrix $P$. For a template distribution $\temp \in \mathcal{P}_2(\RR^n)$ with $\temp \ll \lambda$, $\Sigma = \{f_\sharp \temp : f \in \mathcal{F}(P)\}$ is the set of reference distributions such that the compatibility condition holds.
\end{corollary}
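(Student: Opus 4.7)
The plan is to reduce this corollary directly to \Cref{shearTheorem} by verifying that the affine map $S(x) = Ax + b$ already has the structural form required there. Since $A$ is symmetric positive definite with orthonormal eigenbasis encoded by $P$, I would start by spectrally decomposing $A = P^\top D P$, where $D = \diag(d_1, \ldots, d_n)$ with all $d_j > 0$.

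Next, I would exhibit the required functions $g_j$ componentwise. Define $g_j : \RR \to \RR$ by $g_j(z) = d_j z$, which is clearly differentiable (in fact linear), and assemble $g(z) = (g_1(z_1), \ldots, g_n(z_n))^\top$. Then a direct computation gives
\begin{equation*}
P^\top g(Px) + b \;=\; P^\top D (Px) + b \;=\; (P^\top D P)x + b \;=\; Ax + b \;=\; S(x),
\end{equation*}
which shows that $S$ has exactly the form assumed in \Cref{shearTheorem}, with the orthogonal matrix $P$ and the differentiable coordinatewise functions $g_j$.

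Having matched the hypothesis, I would invoke \Cref{shearTheorem} to conclude that the set of reference distributions for which the compatibility condition holds is precisely $\Sigma = \{f_\sharp \temp : f \in \mathcal{F}(P)\}$, which is the desired statement. The fact that $d_j > 0$ is what guarantees the $g_j$ are strictly monotonically increasing, so that the framework of \Cref{def:shears} and \Cref{shearTheorem} genuinely applies; this is the only substantive check, and it is immediate from positive definiteness of $A$.

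There is essentially no obstacle here, since the corollary is a specialization rather than a strengthening: the only observation required is that a symmetric positive definite affine transformation is a special case of the separable maps $P^\top g(Px) + b$ considered in \Cref{shearTheorem}. No further analytic estimates, regularity considerations, or appeals to \Cref{mainTheorem} or Brenier's theorem are needed beyond what \Cref{shearTheorem} already subsumes.
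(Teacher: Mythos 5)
Your proposal is correct and matches the paper's (essentially implicit) argument: the paper states this corollary as an immediate specialization of \Cref{shearTheorem}, having already noted after \Cref{def:shears} that $A = P^\top D P$ with $d_i > 0$ corresponds to choosing $f_i(y) = d_i y$, which is exactly your reduction via $g_j(z) = d_j z$. The only minor remark is that \Cref{shearTheorem} only asks that the $g_j$ be differentiable, so your monotonicity check, while true, is not even needed to invoke it.
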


\begin{example}[Gaussians with fixed shear]
To illustrate \Cref{shearTheorem}, we provide a simple example again with Gaussians.  Let $\temp = \mathcal{N}(m_1, I_n)$.  Consider a symmetric positive definite matrix $A$ with spectral decomposition $A = P^\top \Lambda P$ and a corresponding fixed shear $S(x) = A x + b$ for some $b \in \RR^n$, which yields the pushforward $S_\sharp \mu = \mathcal{N}(A m_1 + b, AA^\top)$.  For simplicity, we will check that the subset of compatible affine transformations 
\begin{align*}
    \mathcal{F}_{\text{affine}}(P) &= \{ f(x) = C x + d  : f \in \mathcal{F}(P)\} \\
    &= \{ P^\top D P x + d : D_{ij} = 0 \hspace{0.1cm}\forall\hspace{0.1cm} i \neq j,  D_{ii} > 0, d \in \RR^n \}
\end{align*}
yields reference distributions $\refe \in \{f_\sharp \mu : f \in \mathcal{F}_{\text{affine}}(P) \}$ so that the compatibility condition hold.  In particular note that for $f(x) = Cx + d = P^\top D P x + d$, our reference distributions have the form
\begin{align*}
    \refe = \mathcal{N}( Cm_1 + d, C C^\top) = \mathcal{N}( C m_1 + d, P^\top D^2 P).
\end{align*}
Since the optimal transport map between two general Gaussians $\mathcal{N}(\Tilde{m}_1, \Sigma_1) \to \mathcal{N}(\Tilde{m}_2, \Sigma_2)$ is given by
\begin{align*}
    \Tilde{m}_2 + \Sigma_1^{-\frac{1}{2}} ( \Sigma_1^{\frac{1}{2}} \Sigma_2 \Sigma_1^{\frac{1}{2}} )^{\frac{1}{2}} \Sigma_1^{-\frac{1}{2}} \Big(x - \Tilde{m}_1 \Big),
\end{align*}
see \cite{takatsu11}, we know that
\begin{align*}
    T_\refe^\temp &= m_1 + \underbrace{(CC^\top)^{-\frac{1}{2}} ( CC^\top)^{\frac{1}{2}} (CC^\top)^{-\frac{1}{2}}}_{(CC^\top)^{-1/2} = (C^2)^{-1/2} = C^{-1}} \Big( x - Cm_1 + d\Big) \\
    &= m_1 + C^{-1}( x - Cm_1 - d) = C^{-1} (x - d).
\end{align*}
So we have that
\begin{align*}
    S \circ T_\refe^\temp(x) = A C^{-1}(x -d) + b.
\end{align*}
On the other hand because $C = P^\top D P = C^\top$ and $A = P^\top \Lambda P = A^\top$ (so that $AC = CA$), we have that
\begin{align*}
    T_\refe^{S_\sharp \temp} &= Am_1 + b + M( x - (Cm_1 + d)) \\
    M &= (CC^\top)^{-1/2} ( (CC^\top)^{1/2} AA^\top (CC^\top)^{1/2} )^{1/2} (CC^\top)^{-1/2}\\
    &= C^{-1} ( C^{-1} A^2 C^{-1} )^{1/2} C^{-1} = C^{-1} (C^2 A^2)^{1/2} C^{-1} = A C^{-1} \\
    \implies T_\refe^{S_\sharp \temp}(x) &= \underbrace{Am_1 - Am_1}_{0} + b + AC^{-1} ( x - d) = S \circ T_\refe^\temp (x).
\end{align*}
So we actually get compatibility here and in Appendix section \ref{example11appendix} we present a numerical validation of this fact.
\label{example11}
\end{example}

% \begin{proof}
% Note that $S(x) = Ax + b$ is a special case of $P^\top g(Px)$, where $g_i(z) = \lambda_i z$ for eigenvalues $\lambda_i$.  The result follows from \cref{shearTheorem}.
% \end{proof}

{\bf Shears are Not Compatible in General:} Another consequence of \Cref{mainTheorem} is that non-trivial orthogonal transformations cannot be transformations that satisfy the compatibility condition.  
\begin{theorem}\label{orthogonalImpliesIdentity}  Let $\refe \ll \lambda, \temp \in \mathcal{P}_2(\RR^n)$, and let $S(x) = Ax + b$ be a compatible transformation (i.e $S \circ T_\refe^\temp = T_\refe^{S_\sharp \temp}$) such that $b \in \RR^n$ is a shift and $A \in \RR^{n\times n}$ is an orthogonal matrix.  Then $A$ must be the identity.
\end{theorem}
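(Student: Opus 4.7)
The plan is to derive three properties of $A$ that together force $A = I_n$: orthogonality (given by hypothesis), symmetry, and positive semi-definiteness. An orthogonal symmetric matrix satisfies $A^2 = A^\top A = I_n$, so its eigenvalues lie in $\{-1, +1\}$; positive semi-definiteness rules out $-1$, and symmetry diagonalizes $A$ to $I_n$. Thus the task reduces to extracting symmetry and positive semi-definiteness of $A$ from the compatibility hypothesis.

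To obtain symmetry, I would differentiate the compatibility equation $T_\refe^{S_\sharp \temp}(x) = S\circ T_\refe^\temp(x) = A\,T_\refe^\temp(x) + b$ to obtain $J_{T_\refe^{S_\sharp \temp}}(x) = A\,J_{T_\refe^\temp}(x)$. Since $\refe \ll \lambda$, Brenier's theorem (\Cref{Brenier}) applied to both $T_\refe^\temp$ and $T_\refe^{S_\sharp\temp}$ guarantees that both $J_{T_\refe^\temp}(x)$ and $A\,J_{T_\refe^\temp}(x)$ are symmetric and positive semi-definite at $\refe$-a.e.\ $x$. At a point $x_0$ where $J_{T_\refe^\temp}(x_0)$ is in addition positive definite, symmetry of $A\,J_{T_\refe^\temp}(x_0)$ combined with orthogonality $A^\top = A^{-1}$ yields $A J_{T_\refe^\temp}(x_0) = J_{T_\refe^\temp}(x_0) A^{-1}$, hence $A^2 J_{T_\refe^\temp}(x_0) = J_{T_\refe^\temp}(x_0)$, and invertibility then gives $A^2 = I_n$, so $A = A^\top$.

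Once $A$ is known to be symmetric, $S(x) = Ax + b = \nabla\varphi(x)$ for $\varphi(x) = \tfrac12 x^\top A x + b^\top x$, so \Cref{mainTheorem} applies and directly concludes that $J_S = A$ is symmetric positive definite. The three conditions---orthogonal, symmetric, positive definite---then give $A = I_n$ as in the first paragraph, and so $S(x) = x + b$. Alternatively, one can finish without invoking \Cref{mainTheorem}: $A$ and $J_{T_\refe^\temp}(x_0)$ now commute and are simultaneously diagonalizable in an orthonormal basis in which $A$ has entries in $\{\pm 1\}$ and $J_{T_\refe^\temp}(x_0)$ has strictly positive entries, so positive semi-definiteness of the product forces every sign to be $+1$.

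The main obstacle I anticipate is the technical point that $J_{T_\refe^\temp}$ is positive definite on a set of positive $\refe$-measure, since the previous algebra is vacuous at points where $J_{T_\refe^\temp}$ is singular. This should follow from the Monge--Ampère change-of-variables identity $\det J_{T_\refe^\temp}(x) = f_\refe(x)/f_\temp(T_\refe^\temp(x))$, valid $\refe$-a.e.\ under $\refe \ll \lambda$, and it is the same regularity ingredient that \Cref{mainTheorem} already tolerates; so no new difficulty is introduced beyond what the framework of the paper already handles.
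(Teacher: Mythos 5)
Your overall strategy is sound and differs from the paper's proof in a useful way: the paper concludes at once from \Cref{main} that $A$ is symmetric positive definite, even though the hypotheses of \Cref{main} require $S=\nabla\varphi$ (for affine $S$ this already means $A$ symmetric), which is not among the hypotheses of \Cref{orthogonalImpliesIdentity}; your first stage, extracting symmetry of $A$ from the differentiated compatibility relation $J_{T_\refe^{S_\sharp\temp}}(x)=A\,J_{T_\refe^\temp}(x)$ together with \Cref{Brenier} applied to both transport maps, is precisely what is needed to make that invocation legitimate, so your route is more self-contained. Your regularity caveats (Hessians only a.e., positive definiteness of $J_{T_\refe^\temp}$ on a set of positive measure via Monge--Amp\`ere, which tacitly needs $\temp$ to be nondegenerate) are at the same level of implicit assumption as the paper's own proof of \Cref{main}, which likewise asserts positive definiteness of the Hessians, so no new objection arises there.

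However, one algebraic step fails as written. With $J=J_{T_\refe^\temp}(x_0)$ symmetric and $A^\top=A^{-1}$, symmetry of $AJ$ does give $AJ=JA^{-1}$, but from this you get $AJA=J$ (equivalently $A^2J=JA^{-2}$), not $A^2J=J$; the latter is essentially what you are trying to prove, so the deduction as stated is circular. The conclusion is still reachable from facts you have already recorded: since $AJ=J_{T_\refe^{S_\sharp\temp}}(x_0)$ is symmetric positive semi-definite and $J\succ0$, the matrix $A=(AJ)J^{-1}$ is similar to $J^{-1/2}(AJ)J^{-1/2}\succeq0$, hence diagonalizable with nonnegative real eigenvalues; orthogonality forces every eigenvalue to have modulus one, so all eigenvalues equal $1$ and $A=I_n$ outright, which even renders the subsequent appeal to \Cref{mainTheorem} unnecessary. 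Alternatively, if you only use symmetry of $AJ$: the similarity above shows $A$ has real spectrum, and an orthogonal (hence normal) real matrix with real spectrum is symmetric, giving $A^2=A^\top A=I_n$, after which your finish (or the paper's eigenvalue argument) applies. With that one step repaired, your proof is correct.
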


\begin{proof}
The proof can be found in \Cref{ProoforthogonalityImpliesIdentity}.
\end{proof}

% RECONSIDER THIS LATER
% \todo[inline, color=green]{Should I also insert the result I mentioned about orthogonal Hessians for the $\RR^2$ case?  -- >  Very specific subcase for generalization.  Only works in specific setting.  Maybe put in later.}

%%%%%%%%%% RESULT ON CLASSIFICATION %%%%%%%%%%%%%%%%%%%%%%%%

\section{Binary classification with pre-specified separation}\label{sec:delta_separation}

The main application of LOT isometries is to embed a subset of $\mathcal{P}_2(\RR^n)$ into a linear space where binary classification is easily accomplished via linear separability.  We show that data generated from a suitably bounded set of transformations still allows to have LOT linear separability in a suitable supervised learning paradigm.  We only try to classify two classes. 
Consider the following data-generating process:
\begin{definition}[Elementary Transformation Generated Process]
Consider a class of functions $\mathcal{H} = \{ h : \RR^n \to \RR^n  \}$.  Let $\temp_1$ or $\temp_2$ be two base probability measures.  Then we call $\mathcal{H} \star \temp_1 = \{h_\sharp \temp_1 : h \in \mathcal{H} \}$ and  $\mathcal{H} \star \temp_2 = \{h_\sharp \temp_2 : h \in \mathcal{H} \}$ the measures generated from elementary transformation $\mathcal{H}$ and $\temp_1$ and $\mathcal{H}$ and $\temp_2$, respectively.  Moreover, assume that $\mathcal{H}\star \temp_1$ have label $y = 1$ and $\mathcal{H}\star \temp_2$ have label $y=-1$.
\end{definition}

Given a reference $\refe$ and a set of measures $\mathcal{Q}$, let $F_\refe(\mathcal{Q})$ be the embedding of $\mathcal{Q}$ into the LOT space $L^2(\RR^n, \refe)$.  Given the data generating process above, our goal is to show that the linear separability of $F_\refe(\mathcal{H}\star \temp_1)$ and $F_\refe(\mathcal{H} \star \temp_2)$ is well characterizable with respect to $\mathcal{H}$ and the distance between $\temp_1$ and $\temp_2$.  We summarize the main result in the theorem below with proof given in \cref{separabilityProof}:

\begin{theorem}\label{separability}
Consider two distributions $\temp_1,\temp_2 \in \wspaceR$ with Wasserstein-2 distance $W_2(\temp_1, \temp_2)>0$ and assume that $\temp_1$ and $\temp_2$ have bounded support.  Pick a separation level $\delta$ such that $W_2(\temp_1, \temp_2) > \delta > 0$ and an error level $\epsilon > 0$.  Define $L \leq \frac{W_2(\temp_1,\temp_2) - \delta}{2} - \epsilon$.  Let
\begin{align*}
    \mathcal{H} = \{h: \RR^n \to \RR^n \vert h = \nabla \phi \text{ for convex $\phi$} \}
\end{align*}
be some generic convex set of transformations such that $\sup_{h \in \mathcal{H}} \Vert h - I \Vert_{\temp_1} \leq L$ and $\sup_{h \in \mathcal{H}} \Vert h - I \Vert_{\temp_2} \leq L$.  Furthermore, define the $\epsilon$-tube of this set of transformations
\begin{align*}
    \mathcal{H}_\epsilon = \{ \Tilde{h}: \RR^n \to \RR^n \vert \hspace{0.1cm} \Vert h - \Tilde{h} \Vert_{\temp_i} < \epsilon; i \in \{ 1, 2 \} \}.
\end{align*}
Then, for any choice of reference $\refe \in \mathcal{P}_2(\RR^n)$ that is absolutely continuous with respect to the Lebesgue measure, the sets  $F_{\refe}(\mathcal{H}_\epsilon \star \temp_1)$ and $F_{\refe}(\mathcal{H}_\epsilon \star \temp_2)$ are linearly separable with separation at least $\delta$.
\end{theorem}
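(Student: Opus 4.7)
My plan is to reduce the claimed $L^2(\RR^n,\refe)$ separability in the LOT embedding to a Wasserstein-distance separation between the two pushforward classes, and then transfer this to the embedding space via the standard non-expansivity of the LOT map. In one line: $\mathcal{H}_\epsilon \star \temp_i$ sits inside a $W_2$-ball of radius $L+\epsilon$ around $\temp_i$, the two balls are $\delta$-apart by hypothesis, and LOT does not collapse $W_2$-distances.

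First I would observe that every $h \in \mathcal{H}$ is the gradient of a convex function and hence, by Brenier's theorem (\Cref{Brenier}), the unique optimal transport from $\temp_i$ to $h_\sharp \temp_i$. Therefore $W_2(\temp_i, h_\sharp \temp_i) = \|h - I\|_{\temp_i} \le L$. For any $\tilde{h} \in \mathcal{H}_\epsilon$, choose $h \in \mathcal{H}$ with $\|\tilde{h} - h\|_{\temp_i} < \epsilon$; since $\tilde{h}$ still transports $\temp_i$ to $\tilde{h}_\sharp \temp_i$ (non-optimally in general), the $L^2(\RR^n,\temp_i)$ triangle inequality yields
\begin{equation*}
W_2(\temp_i, \tilde{h}_\sharp \temp_i) \le \|\tilde{h} - I\|_{\temp_i} \le \|\tilde{h} - h\|_{\temp_i} + \|h - I\|_{\temp_i} < L + \epsilon.
\end{equation*}
Two applications of the $W_2$ triangle inequality now give, for any $\tilde{h}_1,\tilde{h}_2 \in \mathcal{H}_\epsilon$,
\begin{equation*}
W_2(\tilde{h}_{1\sharp}\temp_1,\tilde{h}_{2\sharp}\temp_2) \ge W_2(\temp_1,\temp_2) - 2(L+\epsilon) \ge \delta,
\end{equation*}
using the hypothesis $L \le (W_2(\temp_1,\temp_2) - \delta)/2 - \epsilon$. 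To move into LOT space I would invoke the elementary non-expansivity of LOT: when $\refe \ll \lambda$, $T_\refe^{\nu_1} \circ (T_\refe^{\nu_2})^{-1}$ is a (non-optimal) transport plan from $\nu_2$ to $\nu_1$, so a change of variables gives $W_2(\nu_1,\nu_2) \le \|F_\refe(\nu_1) - F_\refe(\nu_2)\|_\refe$. Combining,
\begin{equation*}
\|F_\refe(\tilde{h}_{1\sharp}\temp_1) - F_\refe(\tilde{h}_{2\sharp}\temp_2)\|_\refe \ge \delta
\end{equation*}
uniformly over $\tilde{h}_1,\tilde{h}_2 \in \mathcal{H}_\epsilon$.

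The last and most delicate step is producing an actual affine separator with margin $\delta$ from this uniform pairwise distance bound, since in a general Hilbert space $\delta$-separated non-convex sets need not be linearly separable. My plan is to exploit the convexity of $\mathcal{H}$ (inherited by its $\epsilon$-tube in $L^2(\RR^n,\temp_i)$) together with the Wasserstein-ball structure above, and either construct an explicit affine hyperplane in $L^2(\RR^n,\refe)$---for instance one normal to $F_\refe(\temp_1) - F_\refe(\temp_2)$ and passing through the midpoint---or apply Hahn--Banach to the closed convex hulls of $F_\refe(\mathcal{H}_\epsilon \star \temp_i)$ after verifying that these hulls remain disjoint. Showing that the convex-hull separation survives the nonlinear LOT map is the step I expect to be the hardest; the earlier pieces are essentially triangle-inequality bookkeeping.
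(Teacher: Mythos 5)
Your metric estimates are exactly the paper's: bounding $W_2(\temp_i,\tilde h_\sharp\temp_i)\le L+\epsilon$ (via the fact that each $h\in\mathcal H$ is a gradient of a convex function, or simply via the coupling $(\mathrm{id},\tilde h)_\sharp\temp_i$), chaining triangle inequalities to get $W_2(\tilde h_{1\sharp}\temp_1,\tilde h_{2\sharp}\temp_2)\ge W_2(\temp_1,\temp_2)-2(L+\epsilon)\ge\delta$, and then invoking the lower bound $W_2(\nu_1,\nu_2)\le\Vert F_\refe(\nu_1)-F_\refe(\nu_2)\Vert_\refe$ from \cite{moosmueller20} (your justification via $(T_\refe^{\nu_2})^{-1}$ presumes invertibility; the coupling $\bigl(F_\refe(\nu_2),F_\refe(\nu_1)\bigr)_\sharp\refe$ gives the same inequality without it). Up to the uniform pairwise bound $\Vert F_\refe(\tilde h_{1\sharp}\temp_1)-F_\refe(\tilde h_{2\sharp}\temp_2)\Vert_\refe\ge\delta$ you are reproducing the paper's argument.

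The genuine gap is the step you explicitly defer: passing from this pairwise $\delta$-separation to an affine separator with margin $\delta$. As you yourself observe, two $\delta$-separated subsets of a Hilbert space need not be linearly separable, and neither of your fallback strategies works as stated: a hyperplane through the midpoint normal to $F_\refe(\temp_1)-F_\refe(\temp_2)$ gets no margin guarantee from pairwise distances alone, and the convex hulls of the embedded sets could a priori intersect, since convexity of $\mathcal H$ in function space does not transfer through the nonlinear map $F_\refe$ merely by taking hulls. The missing ingredient --- and the way the paper closes this step --- is a convexity lemma for the LOT image itself: if $\mathcal H$ is a convex family of gradients of convex functions and the measures involved are absolutely continuous, then $F_\refe(\mathcal H\star\temp_i)$ is convex, because $sT_\refe^{h_\sharp\temp_i}+(1-s)T_\refe^{\hat h_\sharp\temp_i}$ is again the gradient of a convex function, hence by \Cref{Brenier} it is the optimal map from $\refe$ to its own pushforward of $\refe$, which the paper identifies with $\bigl(sh+(1-s)\hat h\bigr)_\sharp\temp_i\in\mathcal H\star\temp_i$. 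Together with precompactness of $\mathcal H\star\temp_i$ (bounded supports plus Prokhorov, and continuity of $F_\refe$), the embedded sets are convex with compact closure, and Hahn--Banach separation of disjoint convex sets at distance at least $\delta$ then produces the hyperplane with margin $\delta$. Without an argument of this type (and some care in extending it to the $\epsilon$-tube $\mathcal H_\epsilon$, whose elements are no longer gradients of convex functions), your proposal proves the distance estimate but not the stated linear separability.
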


\begin{remark}
The bounds on the function class $\mathcal{H}$ ensure that $\mathcal{H} \star \mu_1$ and $\mathcal{H} \star \mu_2$ are disjoint.  However, note that there can still exist function classes $\mathcal{H}$ without a bound on it, where $\mathcal{H} \star \mu_1$ and $\mathcal{H} \star \mu_2$ are still disjoint.  For example, you can consider the case when $\mathcal{H}$ is the set of all shifts and when $\mu_1$ and $\mu_2$ are a uniform distribution on the unit square and an isotropic Gaussian.  In this case, the sets $\mathcal{H} \star \mu_1$ and $\mathcal{H} \star \mu_2$ are disjoint.
\end{remark}

%\textcolor{red}{As stated, this is incorrect.  It's not just that $\mu_1$ and $\mu_2$ are different, but that the transformations $\mathcal{H}\star\temp_1$ and $\mathcal{H}\star\temp_2$ are disjoint.  For example, if $\temp_2$ is a shift of $\temp_1$, then they wouldn't be separable.}

\begin{remark}
Notice that the functions $\mathcal{F}(P)$ from \Cref{def:shears} satisfy the conditions of $\mathcal{H}$ for \Cref{separability} above.  In particular, every $S \in \mathcal{F}(P)$ can be written as $S = \nabla \phi$ for some convex $\phi$.  For this, let $p_{ij}$ denote the $(i,j)$th entry of $P^\top$, then we have that
\begin{align*}
    \phi(x) &= \int_\RR \Big( (S(x))_j \Big) dx_j = \int_\RR \sum_{k=1}^n p_{jk} f_k \Big( \sum_{i=1}^n p_{ki} x_i \Big) dx_j \\
    &= \sum_{k=1}^n p_{jk} \int_\RR f\bigg( \sum_{i=1}^n p_{ki} x_i \bigg) dx_j + C(x_1, \dotsc, x_{j-1}, x_{j+1}, \dotsc, x_n) .
\end{align*}
The positive definiteness and symmetry of $J_S$ implies that $\phi$ is convex.
\end{remark}

%%%%%%%%%%% FUTURE %%%%%%%%%%%%%%%%%%%%%%%%%
% \todo[inline,color=green]{Caroline:  Compare assumptions to Caroline and Alex's paper. Write a paragraph about no assumptions on compactness, but need bounded support.}
% \todo[inline,color=red]{Future: Make a version of this theorem where $\mathcal{H}$ is a transport up to an error (within some $\epsilon$-tube)  Need this for arxiv/journal.}

When we assume that $\mathcal{H}$ is compatible with respect to $\temp_1$ and $\temp_2$ and use either of these templates as the reference distribution, we actually gain better results than the general separation theorem above.  The proof for the theorem below is in \cref{compat_sep_proof}:

\begin{theorem}\label{compat_sep}
Fix $\temp_1, \temp_2 \in \mathcal{P}_2(\RR^n)$ with finite support and $\temp_1,\temp_2 \ll \lambda$, and let $\mathcal{H}$ be a convex set of transformations that are compatible with $\temp_1$ and $\temp_2$ (this includes shifts and scalings).  Let $\mathcal{H}_\epsilon = \{h_\epsilon : \Vert h - h_\epsilon \Vert_{\temp_j} < \epsilon, j = 1,2 \}$.
\begin{enumerate}
    \item (Linear separability) If $\mathcal{H} \star \temp_1$ and $\mathcal{H} \star \temp_2$ are disjoint, then $F_{\temp_1}(\mathcal{H} \star \temp_1 )$ and $F_{\temp_1}(\mathcal{H} \star \temp_2)$ are linearly separable.
    
    \item (Linear separability of $\epsilon$-tube functions) If the minimal separation between $\mathcal{H} \star \temp_1$ and $\mathcal{H} \star \temp_2$ is greater than $2\epsilon$, then $F_{\temp_1}(\mathcal{H}_\epsilon \star \temp_1)$ and $F_{\temp_1}(\mathcal{H}_\epsilon \star \temp_2)$ are linearly separable.
    
    \item (Sufficient conditions for separation) If we assume:
    \begin{enumerate}
        \item For every $h \in \mathcal{H}$ and every $x \in \RR^n$ that $\Vert h(x) \Vert_2 \geq \sqrt{2} \Vert x - x_0 \Vert_2$ where is the mean of the normalized measure $\vert \temp_1 -\temp_2 \vert$
    \begin{align*}
        x_0 = \frac{1}{\vert \temp_1 - \temp_2 \vert(\RR^n)} \int_{\RR^n} z d\vert \temp_1 - \temp_2\vert(z),
    \end{align*}
    
    \item $\sup_{h,\Tilde{h} \in \mathcal{H}} \Vert h - \Tilde{h}\Vert_{\temp_1} \leq W_2(\temp_1,\temp_2) - \delta - 2\epsilon$ for $\delta > 0$,
    \end{enumerate}
    then $F_{\temp_1}(\mathcal{H}_\epsilon \star \temp_1)$ and $F_{\temp_1}(\mathcal{H}_\epsilon \star \temp_2)$ are separated by at least $\delta >0$.
\end{enumerate}
\end{theorem}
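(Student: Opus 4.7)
The three parts of the theorem rest on the same basic observation: once we take $\temp_1$ as the reference, compatibility linearizes the LOT map on $\mathcal{H}$, namely $F_{\temp_1}(h_\sharp \temp_1) = h$ and $F_{\temp_1}(h_\sharp \temp_2) = h \circ T$ for every $h \in \mathcal{H}$, where $T := T_{\temp_1}^{\temp_2}$. Consequently the embedded classes $F_{\temp_1}(\mathcal{H} \star \temp_1) = \mathcal{H}$ and $F_{\temp_1}(\mathcal{H} \star \temp_2) = \mathcal{H} \circ T$ are both convex in $L^2(\RR^n, \temp_1)$, since post-composition with $T$ is an affine operation. For part 1, the injectivity of LOT (a direct consequence of the uniqueness clause of Brenier's theorem, \Cref{Brenier}) transfers disjointness of the original classes to disjointness of the embedded convex sets, and the Hahn-Banach theorem produces a separating hyperplane.

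For part 2, the $\epsilon$-tube $\mathcal{H}_\epsilon$ is itself convex: a convex combination of perturbations $h_\epsilon^{(1)}, h_\epsilon^{(2)}$ lying $\epsilon$-close to $h^{(1)}, h^{(2)} \in \mathcal{H}$ is again $\epsilon$-close to the convex combination of the $h^{(i)}$, which lies in $\mathcal{H}$ by convexity. To control the embedded tube one invokes the approximate-compatibility estimates of \cite{moosmueller20}: if $h_\epsilon$ is within $\epsilon$ of a compatible $h$, then $F_{\temp_1}(h_{\epsilon \sharp} \temp_j)$ lies within $O(\epsilon)$ of $h \circ T_{\temp_1}^{\temp_j}$ in $L^2(\temp_1)$. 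Hence $F_{\temp_1}(\mathcal{H}_\epsilon \star \temp_j)$ is contained in an $\epsilon$-neighborhood of the convex embedded set $F_{\temp_1}(\mathcal{H} \star \temp_j)$, and the assumed minimal separation $> 2\epsilon$ keeps these neighborhoods disjoint; passing to convex hulls and applying Hahn-Banach completes part 2.

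For part 3, the goal is to show that the minimal separation is at least $\delta + 2\epsilon$, so that the $\epsilon$-tube retains $\delta$ separation. Writing $h - \tilde{h} \circ T = (h - \tilde{h}) + (\tilde{h} - \tilde{h} \circ T)$ and applying the triangle inequality gives
\begin{equation*}
\Vert h - \tilde{h} \circ T \Vert_{\temp_1} \geq \Vert \tilde{h} - \tilde{h} \circ T \Vert_{\temp_1} - \Vert h - \tilde{h} \Vert_{\temp_1}.
\end{equation*}
Under compatibility, the first term on the right equals $W_2(\tilde{h}_\sharp \temp_1, \tilde{h}_\sharp \temp_2)$ via the LOT isometry \eqref{eq:linear_W2}; the role of hypothesis (a), with the $\sqrt{2}$ factor and the centroid $x_0$ of the normalized total-variation measure $\vert \temp_1 - \temp_2 \vert$, is to force $W_2(\tilde{h}_\sharp \temp_1, \tilde{h}_\sharp \temp_2) \geq W_2(\temp_1, \temp_2)$. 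This comes out of a parallelogram-type computation for $\int \Vert \tilde{h}(x) - \tilde{h}(T(x)) \Vert_2^2 \, d\temp_1(x)$, in which the pointwise bound $\Vert \tilde{h}(x) \Vert_2^2 \geq 2 \Vert x - x_0 \Vert_2^2$ absorbs the cross term produced by re-centering $\temp_1$ and $\temp_2$ at the common point $x_0$. Combined with hypothesis (b), $\Vert h - \tilde{h} \Vert_{\temp_1} \leq W_2(\temp_1, \temp_2) - \delta - 2\epsilon$, this yields $\Vert h - \tilde{h} \circ T \Vert_{\temp_1} \geq \delta + 2\epsilon$, and the $\epsilon$-tube argument of part 2 converts this into the claimed $\delta$ separation.

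The principal obstacle is the parallelogram step in part 3: one must verify that the global pointwise bound $\Vert \tilde{h}(x) \Vert_2 \geq \sqrt{2} \, \Vert x - x_0 \Vert_2$, together with the specific choice of $x_0$ as the centroid of $\vert \temp_1 - \temp_2 \vert$, genuinely prevents $\tilde{h}$ from contracting the $\temp_1$-to-$\temp_2$ pushforward and therefore produces the lower bound $W_2(\tilde{h}_\sharp \temp_1, \tilde{h}_\sharp \temp_2) \geq W_2(\temp_1, \temp_2)$. A smaller but genuine issue in part 2 is that the embedded $\epsilon$-tube need not be convex, since $h \mapsto F_{\temp_1}(h_\sharp \temp_j)$ is only affine on the compatible set $\mathcal{H}$; this is handled by passing to convex hulls before invoking Hahn-Banach.
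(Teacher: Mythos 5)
Your part 1 is essentially the paper's argument (compatibility gives $F_{\temp_1}(h_\sharp\temp_1)=h$ and $F_{\temp_1}(h_\sharp\temp_2)=h\circ T_{\temp_1}^{\temp_2}$, then convexity plus disjointness plus Hahn--Banach), but parts 2 and 3 each contain a genuine gap. In part 3 you reduce the claim to the inequality $W_2(\tilde h_\sharp\temp_1,\tilde h_\sharp\temp_2)\geq W_2(\temp_1,\temp_2)$ and then explicitly leave it as ``the principal obstacle'': this is not a side issue but the entire content of hypothesis (a), so as written part 3 is not proved. The paper carries out exactly this step: by compatibility $W_2(h_\sharp\temp_1,h_\sharp\temp_2)=\Vert h-h\circ T_{\temp_1}^{\temp_2}\Vert_{\temp_1}$, which it rewrites (by a change of variables) as $\big(\int\Vert h(x)\Vert_2^2\,d\vert\temp_1-\temp_2\vert(x)\big)^{1/2}$, applies the pointwise assumption $\Vert h(x)\Vert_2\geq\sqrt2\,\Vert x-x_0\Vert_2$, and then invokes Theorem 6.15 of \cite{villani-2009}, which gives $W_2(\temp_1,\temp_2)\leq\sqrt2\big(\int\Vert x-x_0\Vert_2^2\,d\vert\temp_1-\temp_2\vert(x)\big)^{1/2}$ for any $x_0$ (the centroid of the normalized $\vert\temp_1-\temp_2\vert$ being the minimizer of this bound, which is why that particular $x_0$ appears). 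Some argument of this type is indispensable; a generic ``parallelogram computation'' is not a substitute.

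In part 2 your route relies on the claim that if $\Vert h-h_\epsilon\Vert_{\temp_j}<\epsilon$ then $F_{\temp_1}((h_\epsilon)_\sharp\temp_j)$ lies within $O(\epsilon)$ of $h\circ T_{\temp_1}^{\temp_j}$ in $L^2(\temp_1)$. That is an $O(\epsilon)$-Lipschitz stability statement for the LOT embedding in its target measure, which is not available: the perturbed maps $h_\epsilon$ need not be compatible, and, as the paper itself notes, the H\"older regularity of $\temp\mapsto T_\refe^\temp$ is no better than $1/2$, so at best you would get an $O(\sqrt\epsilon)$ neighborhood, and the hypothesis ``minimal separation $>2\epsilon$'' would no longer close the argument. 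The paper's proof avoids any stability of $F_{\temp_1}$: it uses only the one-sided inequality $\Vert F_{\temp_1}(\mu)-F_{\temp_1}(\nu)\Vert_{\temp_1}\geq W_2(\mu,\nu)$ together with $W_2(h_\sharp\temp_j,(h_\epsilon)_\sharp\temp_j)\leq\Vert h-h_\epsilon\Vert_{\temp_j}<\epsilon$ (equation 2.1 of \cite{ambrosio2013user}) and a $W_2$ triangle inequality, so that the assumed separation $>2\epsilon$ of the unperturbed classes directly yields positive distance between the embedded tubes; the same mechanism is what converts the $\delta+2\epsilon$ bound into $\delta$ in part 3. Finally, your remark that one can ``pass to convex hulls before invoking Hahn--Banach'' does not repair the possible non-convexity of the embedded tubes: positive distance between two sets does not imply their convex hulls are disjoint, so that sentence does no logical work and should be removed or replaced by an actual convexity argument.
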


\begin{remark}
Notice that if we choose $\mathcal{H}$ to be shifts and scalings, the first statement of \Cref{compat_sep} is the direct generalization of corollary 4.3 of \cite{moosmueller20} since shifts and scalings are compatible with every probability measure.
\end{remark}

\begin{remark}
Notice that in \Cref{compat_sep}, the condition $W_2(\temp_1, \temp_2) -\delta \geq \sup_{h,\Tilde{h} \in \mathcal{H}} \Vert \Tilde{h} - h \Vert_{\temp_1}$ in the third statement is essentially the same condition the one in \Cref{separability} because by rewriting the condition in \Cref{separability}, we get $\sup_{h \in \mathcal{H}} \Vert h - I \Vert_{\temp_1} \leq \frac{W_2(\temp_1, \temp_2) - \delta}{2}$.  This comes from the fact that 
\begin{align*}
    2 \sup_{h} \Vert h - I \Vert_{\temp_1} \geq \sup_{h,\Tilde{h} \in \mathcal{H}} \Vert \Tilde{h} - h \Vert_{\temp_1} \geq \sup_{h \in \mathcal{H}} \Vert  h - I \Vert_{\temp_1} - \inf_{\Tilde{h} \in \mathcal{H}} \Vert \Tilde{h} - I \Vert_{\temp_1}.
\end{align*}
If the problem setting allows $I \in \mathcal{H}$, then the right hand side is just $\sup_{h \in \mathcal{H}} \Vert h - I \Vert_{\temp_1}$.  Thus, in this case, \Cref{compat_sep} is stronger than \Cref{separability} since our function class has the larger bound $\sup_{h \in \mathcal{H}} \Vert h - I \Vert_{\temp_1} \leq  W_2(\temp_1,\temp_2) - \delta$.
\end{remark}

\Cref{separability} above acts as a blueprint for controlling the degree of separation in the LOT embedding via the bounds of the function class $\mathcal{H}$.  For the specific setting of shears,
\begin{equation}\label{shearsH}
    \mathcal{H}_{\gamma, M, M_b} = \Big\{ Ax + b : \substack{\text{$A$ is symmetric positive definite with} \\ \text{$\lambda_{\text{min}}(A) > \gamma$ and $\lambda_{\text{max}}(A) < M$, and $\Vert b \Vert_2 \leq M_b$} } \Big\},
\end{equation}
we can choose $\gamma, M,$ and $M_b$ in a way that guarantees that $F_{\refe}(\mathcal{H}_{\gamma, M, M_b} \star \temp_1)$ and $F_{\refe}(\mathcal{H}_{\gamma, M, M_b} \star \temp_2)$ are $\delta$-separated.  This leads us to the following corollary with proof in \cref{ProofshearSeparability}.

%%%%%%%%%%%%%% FUTURE %%%%%%%%%%%%%%%%%%%%%%%%%%%%%%%%%
% \todo[inline]{For the shearing, try to make more informal.  Something like "if you have these bounds on the eigenvalues, then shears work." Make a corollary without shifts and then put the corollary below into the appendix.}

\begin{corollary}\label{shearSeparability}
Consider two distributions $\temp_1$ and $\temp_2$ with Wasserstein-2 distance $W_2(\temp_1, \temp_2)$.  Let us denote $R_1 = \max_{x \in \text{supp}(\temp_1)} \Vert x \Vert_2$ and $R_2 = \max_{x \in \text{supp}(\temp_2)} \Vert x \Vert_2$.  For the function class of shears $\mathcal{H}_{\gamma, M, 0}$ and $\refe \ll \lambda$, we can ensure that $F_\refe(\mathcal{H}_{\gamma, M, 0} \star \temp_1)$ and $F_\refe(\mathcal{H}_{\gamma, M, 0} \star \temp_2)$ are $\delta$-separated if 

\textbf{Case 1:}  assuming that $W_2(\temp_1, \temp_2) > (R_1 + R_2) + \delta$, then $M$ is chosen such that
\begin{align*}
    2 < M \leq \frac{W_2(\temp_1, \temp_2) - \delta + (R_1+R_2)}{R_1 + R_2},
\end{align*}
and

\textbf{Case 2:} assuming that $\delta < W_2(\temp_1, \temp_2) < (R_1 + R_2)$, then $M$ and $\gamma$ is chosen such that
\begin{align*}
    1 < M \leq \frac{W_2(\temp_1, \temp_2) - \delta + (R_1+R_2)}{R_1 + R_2} \\
    \gamma \geq \frac{\delta - W_2(\temp_1, \temp_2) + R_1 + R_2}{R_1 + R_2}.
\end{align*}
\label{ShearSeparableCorollary}
\end{corollary}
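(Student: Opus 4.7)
The plan is to invoke the separation argument behind \Cref{separability}: the $L^2(\refe)$-distance of LOT embeddings dominates the Wasserstein-2 distance, since
\begin{align*}
\|F_\refe(\alpha_1) - F_\refe(\alpha_2)\|_\refe \geq W_2(\alpha_1, \alpha_2),
\end{align*}
because the composition $T_\refe^{\alpha_2}\circ (T_\refe^{\alpha_1})^{-1}$ is a (generally suboptimal) transport plan from $\alpha_1$ to $\alpha_2$ whose cost equals the left-hand side. Combined with the triangle inequality for $W_2$ and the bound $W_2(h_\sharp \temp_i, \temp_i) \leq \|h-I\|_{\temp_i}$ (using $h$ itself as a valid transport map), for any $h, \tilde h \in \mathcal{H}_{\gamma, M, 0}$ one obtains
\begin{align*}
\|F_\refe(h_\sharp \temp_1) - F_\refe(\tilde h_\sharp \temp_2)\|_\refe \geq W_2(\temp_1, \temp_2) - \|h - I\|_{\temp_1} - \|\tilde h - I\|_{\temp_2}.
\end{align*}
Hence it suffices to establish the uniform bound $\|h - I\|_{\temp_1} + \|\tilde h - I\|_{\temp_2} \leq W_2(\temp_1, \temp_2) - \delta$ over the shear class.

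Next I would convert the spectral data $(\gamma, M)$ into a concrete bound on $\|h - I\|_{\temp_i}$. Writing $h(x) = Ax$, since $A$ is symmetric positive definite with spectrum contained in $(\gamma, M)$, the symmetric matrix $A - I$ has spectrum in $(\gamma - 1, M - 1)$, so its operator norm satisfies $\|A - I\|_{\operatorname{op}} \leq \max(M - 1,\, 1 - \gamma)$. Using $\operatorname{supp}(\temp_i) \subseteq \{x : \|x\|_2 \leq R_i\}$,
\begin{align*}
\|h - I\|_{\temp_i}^2 = \int \|(A - I)x\|_2^2 \, d\temp_i(x) \leq \|A - I\|_{\operatorname{op}}^2 R_i^2,
\end{align*}
and therefore $\|h - I\|_{\temp_1} + \|\tilde h - I\|_{\temp_2} \leq \max(M - 1,\, 1 - \gamma)(R_1 + R_2)$. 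Consequently, $\delta$-separation is guaranteed as soon as
\begin{align*}
\max(M - 1,\, 1 - \gamma) \;\leq\; \frac{W_2(\temp_1, \temp_2) - \delta}{R_1 + R_2}.
\end{align*}

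Finally, I would split into the two cases according to whether the right-hand side $\frac{W_2(\temp_1,\temp_2) - \delta}{R_1 + R_2}$ exceeds $1$. In Case 1 it does, so the term $1 - \gamma < 1$ is automatically controlled for every positive $\gamma$ and only $M - 1 \leq \frac{W_2 - \delta}{R_1+R_2}$ needs enforcing; this rearranges to the claimed upper bound on $M$, and the constraint $M > 2$ is consistent with the Case 1 hypothesis $W_2(\temp_1,\temp_2) > (R_1 + R_2) + \delta$. In Case 2 the right-hand side is at most $1$, so both $M - 1$ and $1 - \gamma$ must be bounded by $\frac{W_2 - \delta}{R_1+R_2}$, yielding the paired upper bound on $M$ and lower bound $\gamma \geq 1 - \frac{W_2 - \delta}{R_1+R_2}$ stated in the corollary. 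The main subtlety I expect is exactly Case 2: when $M$ is close to $1$, the dominant contribution to $\|A - I\|_{\operatorname{op}}$ may come from eigenvalues near the lower spectral endpoint $\gamma < 1$ rather than from the upper endpoint, so a one-sided control through $M$ alone is insufficient; this is precisely why the two-sided constraint on the spectrum appears in the statement, and everything else reduces to routine arithmetic.
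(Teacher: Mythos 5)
Your proposal is correct and follows essentially the same route as the paper: the paper derives this corollary by specializing its general result (\Cref{shearSeparabilityGeneral}) to $b=0$ and $\epsilon=0$, whose proof uses exactly your ingredients — the bound $\Vert F_\refe(\alpha_1)-F_\refe(\alpha_2)\Vert_\refe \geq W_2(\alpha_1,\alpha_2)$, the $W_2$ triangle inequality with $W_2(h_\sharp\temp_i,\temp_i)\leq \Vert h-I\Vert_{\temp_i}$, the estimate $\Vert h-I\Vert_{\temp_i}\leq \max(M-1,1-\gamma)R_i$, and the case split on whether $(W_2(\temp_1,\temp_2)-\delta)/(R_1+R_2)$ exceeds $1$. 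The only cosmetic difference is that you justify the first inequality via the composed map $T_\refe^{\alpha_2}\circ (T_\refe^{\alpha_1})^{-1}$ (which tacitly assumes invertibility; the standard argument uses the coupling $(T_\refe^{\alpha_1},T_\refe^{\alpha_2})_\sharp\refe$, and the paper simply cites this inequality from the literature).
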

\begin{proof}
This comes straight from \Cref{shearSeparabilityGeneral} provided that $b = 0$ and $\epsilon = 0$.
\end{proof}

\section{Binary Classification with Multiple References}\label{sec:multiple_ref}
It is possible to achieve better separation with a larger function class than the class of bounded shears described in Section \ref{sec:delta_separation}.
%Although we can connect the degree of linear separability to the boundedness of our function class $\mathcal{H}$, we can actually achieve better separation with a larger function class. 
The cost of this better separation, however, is to use multiple LOT spaces.  Note that once a set of two measures $\mathcal{H} \star \mu_1$ and $\mathcal{H} \star \mu_2$ are separable in LOT space with respect to one reference (from \Cref{separability}), then $\mathcal{H} \star \mu_1$ and $\mathcal{H} \star \mu_2$ must be separable in LOT space with respect to multiple references.

%%%%%%%% FUTURE ? OR DONE %%%%%%%%%%%%%%%%%%%%%%%%%%%%%%%%%%%%%%%%%%%%
% \todo[inline]{Make the trivial statement that when we are separable with one reference, then we are separable with multiple references (increase embedding space) (done)}

\begin{lemma}\label{multipleRef}
Let $\mu, \gamma \in \mathcal{P}_2$, $\epsilon > 0$, and
\begin{align*}
    \mathcal{H} = \{h: \RR^n \to \RR^n \vert h = \nabla \phi \text{ for convex $\phi$} \}
\end{align*}
be such that $\sup_{h \in \mathcal{H}} \Vert h - I \Vert_\mu \leq L$ and $\sup_{h \in \mathcal{H}} \Vert h - I \Vert_\gamma \leq L$, where $2(L+\epsilon) < W_2(\mu,\gamma)$.  Consider a desired separation level $\delta^*$.  If we have absolutely continuous (with respect to the Lebesgue measure) reference measures $\refe_1, \dotsc, \refe_N$ for $N \geq \Big(\frac{\delta^\star}{W_2(\mu, \gamma) - 2(L+\epsilon)}\Big)^2$, then
\begin{align*}
    F_N(\mu) = F_{\refe_1}(\mathcal{H}_\epsilon \star \mu) \times \dotsc \times F_{\refe_N}(\mathcal{H}_\epsilon \star \mu) \\
\end{align*}
and
\begin{align*}
    F_N(\gamma) = F_{\refe_1}(\mathcal{H}_\epsilon \star \gamma) \times \dotsc \times F_{\refe_N}(\mathcal{H}_\epsilon \star \gamma)
\end{align*}
are $\delta^*$-separable.
\end{lemma}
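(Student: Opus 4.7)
The plan is to reduce the multi-reference statement to $N$ independent applications of the single-reference separation result (Theorem \ref{separability}) and then combine the per-reference separating hyperplanes into a single affine hyperplane of the product Hilbert space $\bigoplus_{i=1}^N L^2(\RR^n,\sigma_i)$. The quadratic threshold $N \geq (\delta^\star/(W_2(\mu,\gamma)-2(L+\epsilon)))^2$ in the statement should arise precisely because concatenating $N$ unit-norm hyperplane normals and renormalizing in the product norm inflates the per-reference separation gap by a factor of $\sqrt{N}$.

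First, set $\delta_0 := W_2(\mu,\gamma)-2(L+\epsilon)$, which is strictly positive by hypothesis. For each reference $\sigma_i$ (absolutely continuous by assumption), I would invoke Theorem \ref{separability} applied to $\mu,\gamma$ with the given $L,\epsilon$ and separation level $\delta_0$; this is admissible since the hypothesis $L \leq (W_2(\mu,\gamma)-\delta_0)/2 - \epsilon$ holds with equality by our choice of $\delta_0$. The theorem then delivers an affine hyperplane in $L^2(\RR^n,\sigma_i)$: a unit vector $v_i$ and an offset $c_i\in\RR$ with $\langle v_i,f\rangle_{\sigma_i} \geq c_i + \delta_0/2$ for every $f \in F_{\sigma_i}(\mathcal{H}_\epsilon \star \mu)$ and $\langle v_i,g\rangle_{\sigma_i} \leq c_i - \delta_0/2$ for every $g \in F_{\sigma_i}(\mathcal{H}_\epsilon \star \gamma)$.

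Next, I would assemble the product normal $V := \frac{1}{\sqrt{N}}(v_1,\ldots,v_N)$ in $\bigoplus_{i=1}^N L^2(\RR^n,\sigma_i)$, which has unit length in the natural product inner product $\langle (x_i),(y_i)\rangle_{\mathrm{prod}} := \sum_i \langle x_i,y_i\rangle_{\sigma_i}$. For arbitrary $(f_1,\ldots,f_N) \in F_N(\mu)$ and $(g_1,\ldots,g_N) \in F_N(\gamma)$ --- noting that each coordinate may come from a different element of $\mathcal{H}_\epsilon$, since $F_N(\mu)$ and $F_N(\gamma)$ are genuine Cartesian products --- summing the $N$ per-coordinate inequalities and dividing by $\sqrt{N}$ yields $\langle V,(f_i)\rangle_{\mathrm{prod}} - \langle V,(g_i)\rangle_{\mathrm{prod}} \geq \sqrt{N}\,\delta_0$. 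Hence the hyperplane $\{x : \langle V,x\rangle_{\mathrm{prod}} = \tfrac{1}{\sqrt{N}}\sum_i c_i\}$ separates $F_N(\mu)$ and $F_N(\gamma)$ with gap at least $\sqrt{N}\,\delta_0$; imposing $\sqrt{N}\,\delta_0 \geq \delta^\star$ returns exactly the stated threshold.

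The only genuinely multi-reference step is the $\sqrt{N}$ renormalization, and this is where I would be most careful. Because $F_N(\mu)$ and $F_N(\gamma)$ are Cartesian products (different coordinates are free to use different $h \in \mathcal{H}_\epsilon$), the per-coordinate gaps add linearly to give a total of $N\delta_0/2$ on each side before normalization; dividing by the product norm $\sqrt{N}$ of $(v_1,\ldots,v_N)$ collapses this linear sum to a $\sqrt{N}\,\delta_0$ gap in the product metric --- this is what explains the quadratic rather than linear dependence in $N$ in the lemma. Everything else is a direct assembly of Theorem \ref{separability} with a standard product-Hilbert-space separating-hyperplane argument, so I do not anticipate any real obstacle beyond bookkeeping of the factors.
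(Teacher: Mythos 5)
Your proof is correct and rests on the same two pillars as the paper's: a per-reference application of Theorem \ref{separability} at the extremal level $\delta_0 = W_2(\mu,\gamma)-2(L+\epsilon)$ (admissible, as you note, since $L \le \tfrac{W_2(\mu,\gamma)-\delta_0}{2}-\epsilon$ holds with equality), followed by a $\sqrt{N}$ amplification coming from the $\ell_2$ product structure. The mechanism of the second step differs, though. The paper never constructs a hyperplane in the product space: it argues purely with the product metric, observing that per-coordinate separations $\delta_j$ combine in quadrature to $\bigl(\sum_j \delta_j^2\bigr)^{1/2} < \sqrt{N}\,\delta_0$ and then reading off the requirement on $N$; as written this is more a computation of how large $N$ must be than a construction, and because it takes each $\delta_j$ strictly below $\delta_0$ it only yields the strict inequality $N > (\delta^*/\delta_0)^2$. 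You instead extract from each reference a unit normal $v_i$ with margin $\delta_0/2$ on each side and concatenate them into $V=\tfrac{1}{\sqrt{N}}(v_1,\dots,v_N)$, obtaining an explicit separating functional on $\bigoplus_{i} L^2(\RR^n,\sigma_i)$ with gap $\sqrt{N}\,\delta_0$; this gives literal linear separability (not merely metric separation) in the product space, correctly accounts for the fact that different coordinates of $F_N(\mu)$ and $F_N(\gamma)$ may come from different elements of $\mathcal{H}_\epsilon$, and recovers the non-strict threshold $N \ge (\delta^*/\delta_0)^2$ exactly as stated. The only caveats you inherit are the paper's own: reading ``linearly separable with separation at least $\delta$'' as a margin-$\delta_0/2$ hyperplane implicitly leans on the convexity of the embedded sets established in the appendix, and Theorem \ref{separability} formally assumes bounded support of the two templates, an assumption Lemma \ref{multipleRef} omits but which the paper's own proof also passes over silently.
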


Notice that the \Cref{multipleRef} allows one to pick a larger function class $\mathcal{H}$ and a small separation level $\delta^*$; however, the number of LOT spaces that you must embed into is the cost of this better performance.

As preliminaries of proving this result, let's discuss the product metric and some general results for normed spaces that will give us intuition in our analysis.  Given the metrics $d_{\refe_1}$ and $d_{\refe_2}$ for $F_{\refe_1}(\mathcal{P}_2)$ and $F_{\refe_2}(\mathcal{P}_2)$, respectively, and $(T_1,S_1), (T_2, S_2) \in F_{\refe_1}(\mathcal{P}_2) \times F_{\refe_2}(\mathcal{P}_2)$, the $\ell_p$ product metric on $F_{\refe_1}(\mathcal{P}_2) \times F_{\refe_2}(\mathcal{P}_2)$ is 
\begin{align*}
    d_{p, \refe_1 \times \refe_2}\big( (T_1,S_1), (T_2, S_2) \big) = \Vert \big( d_{\refe_1}(T_1, T_2), d_{\refe_2}(S_1,S_2) \big) \Vert_p.
\end{align*}
In particular, this $\ell_p$ product metric is just the regular $\ell_p$ Euclidean norm applied to the Euclidean point $x = \big( d_{\refe_1}(T_1, T_2), d_{\refe_2}(S_1,S_2) \big)$.  Moreover, note that we can easily extend this definition to a product space of more than 2 spaces.

A basic (well-known) exercise in linear algebra shows that in any finite dimensional vector space $V$, for any $0 < r < p$, and for $x \in V$, we have
\begin{align*}
    \Vert x \Vert_p \leq \Vert x \Vert_r \leq n^{\frac{1}{r}-\frac{1}{p}} \Vert x \Vert_p.
\end{align*}
Even though $F_{\refe_1}(\mathcal{P}_2) \times F_{\refe_2}(\mathcal{P}_2)$ is an infinite-dimensional space, the product metric on this product space is actually acting on $\RR_{>0} \times \RR_{>0}$.  This means that the $\ell_p$ and $\ell_r$ norm inequalities above hold for our product space when endowed with the product metric.  This essentially signals ``stronger" linear separability.

To see this, assume that $F_{\refe_1}(\mathcal{H} \star \mu)$ and $F_{\refe_1}(\mathcal{H} \star \gamma)$ are $\delta_1$-separated in $F_{\refe_1}(\mathcal{P}_2)$ and that $F_{\refe_2}(\mathcal{H} \star \mu)$ and $F_{\refe_2}(\mathcal{H} \star \gamma)$ are $\delta_2$-separated, then in the product space, we have
\begin{align*}
    \max(\delta_1,\delta_2) &= \Bigg\Vert \begin{pmatrix}
    d_{\refe_1}(F_{\refe_1}(\mathcal{H} \star \mu), F_{\refe_1}(\mathcal{H} \star \gamma)) \\
    d_{\refe_2}( F_{\refe_2}(\mathcal{H} \star \mu) , F_{\refe_2}(\mathcal{H} \star \gamma))
    \end{pmatrix}  \Bigg\Vert_\infty \\
    &\leq \Bigg\Vert  \begin{pmatrix}
    d_{\refe_1}(F_{\refe_1}(\mathcal{H} \star \mu), F_{\refe_1}(\mathcal{H} \star \gamma)) \\
    d_{\refe_2}( F_{\refe_2}(\mathcal{H} \star \mu) , F_{\refe_2}(\mathcal{H} \star \gamma))
    \end{pmatrix} \Bigg\Vert_2 \\
    &\leq \sqrt{2} \Bigg\Vert \begin{pmatrix}
    d_{\refe_1}(F_{\refe_1}(\mathcal{H} \star \mu), F_{\refe_1}(\mathcal{H} \star \gamma)) \\
    d_{\refe_2}( F_{\refe_2}(\mathcal{H} \star \mu) , F_{\refe_2}(\mathcal{H} \star \gamma))
    \end{pmatrix}  \Bigg\Vert_\infty \\
    &= \sqrt{2} \max(\delta_1, \delta_2).
\end{align*}
We are more interested, however, in providing lower bounds for the product $\ell_2$-norm.  To investigate this, let's assume that $\mathcal{H}$ is fixed and that we have $N$ templates distributions $\refe_1, \dotsc, \refe_N$.  Now if $\mu$ is a generic distribution, let
\begin{align*}
    F_N(\mu) &= F_{\refe_1}(\mathcal{H} \star \mu) \times F_{\refe_2}(\mathcal{H} \star \mu) \times \dotsc \times
    F_{\refe_N}(\mathcal{H} \star \mu) \\
    &\subseteq F_{\refe_1}(\mathcal{P}_2) \times \dotsc \times F_{\refe_N}(\mathcal{P}_2)
\end{align*}
denote the embedding of $\mathcal{H} \star \mu$ into the product LOT space defined by $\refe_1, \dotsc, \refe_N$.  We will now prove the result.

\begin{proof}[Proof of \cref{multipleRef}]
From \cref{separability}, we know that for every $j$, $F_{\refe_j}(\mathcal{H} \star \mu)$ and $F_{\refe_j}(\mathcal{H} \star \mu)$ can be $\delta_j$-separated for some $\delta_j < W_2(\mu,\gamma)-2(L+\epsilon)$, where $\delta_j$ will be determined later.  Now notice that the degree of separation in the product space is
\begin{align*}
    \Bigg\Vert  \begin{pmatrix}
    \delta_1 \\
    \vdots \\
    \delta_N
    \end{pmatrix} \Bigg\Vert_2 = \sqrt{\sum_{j=1}^N \delta_j^2} < \sqrt{\sum_{j=1}^N (W_2(\mu,\gamma) - 2(L+\epsilon))^2} = \sqrt{N} (W_2(\mu, \gamma) - 2(L+\epsilon)).
\end{align*}
Thus, if we want to be at least $\delta^*$-separated in the product space, then we must have
\begin{align*}
    \delta^* \leq \Bigg\Vert  \begin{pmatrix}
    \delta_1 \\
    \vdots \\
    \delta_N
    \end{pmatrix} \Bigg\Vert_2 <
    \sqrt{N} (W_2(\mu, \gamma) - 2(L+\epsilon)) \\
    \implies N > \bigg( \frac{\delta^*}{W_2(\mu,\gamma) - 2(L+\epsilon)} \bigg)^2
\end{align*}
So we're done.
\end{proof}

\begin{example}\label{multipleLOT_Ex}
To show the tradeoff of \Cref{multipleRef}, let's try a multiple LOT embedding example with Gaussians.  Using the previous examples, assume that we have two template distributions $\mu_1 = \mathcal{N}(0, \Sigma_1)$ and $\mu_2 = \mathcal{N}(0, \Sigma_2)$.  We know that $W_2(\temp_1,\temp_2)^2 = \Tr( \Sigma_1 + \Sigma_2 - 2(\Sigma_1^{\frac{1}{2}} \Sigma_2 \Sigma_1^{\frac{1}{2}} )^{1/2} )$.  We consider the set of shears 
\begin{align*}
    \mathcal{H} = \{ A x : A =A^\top \in \RR^{n\times n}, M I_n \succeq A \succeq m I_n \succ 0 \}
\end{align*}
as our set of transformations, and to ensure separation, we use $L \leq \frac{W_2(\temp_1, \temp_2) - \delta}{2}$, which is shown in \Cref{multipleEx} to imply that
\begin{align*}
    \max \big( \vert M - 1\vert , \vert 1 - m \vert \big) \leq \frac{ W_2(\temp_1,\temp_2) - \delta }{2\max_{j = 1,2} \Vert \Sigma_j^{1/2} \Vert_F}.
\end{align*}
Now let us define our reference distributions to be of the form $\refe_1 = (h_1)_\sharp \temp_1$ and $\refe_2 = (h_2)_\sharp \temp_2$ for $h_1(x) = A_1 x$ and $h_2(x) = A_2 x$ for $h_1, h_2 \in \mathcal{H}$ so that
\begin{align*}
    \refe_1 = (h_1)_\sharp \mu_1 = \mathcal{N}( 0, A_1 \Sigma_1 A_1^\top), \hspace{0.4cm} \refe_2 = (h_2)_\sharp \mu_2 = \mathcal{N}( 0, A_2 \Sigma_2 A_2^\top).
\end{align*}
Notice that the bounds on $M$ and $m$ imply that there are infinite choices of reference distributions to choose from.  Moreover, we show in \Cref{multipleEx} that
\begin{align*}
    \frac{M^2}{m} W_2(\temp_1,\temp_2) \geq \Vert T_{\sigma_j}^{h_\sharp \mu_1} - T_{\sigma_j}^{ \Tilde{h}_\sharp \mu_2} \Vert_{\sigma_j} \geq \frac{m^2}{M} W_2(\temp_1,\temp_2)
\end{align*}
for our choices of reference distributions.  Now choosing $N$ reference distributions, our multiple LOT embedding has minimal separation bounded below by \begin{align*}
    \sqrt{ \sum_{j=1}^N \Vert T_{\sigma_j}^{(h_1)_\sharp \mu_1} - T_{\sigma_j}^{ (h_2)_\sharp \mu_2} \Vert_{\sigma_j}^2 } &\geq \sqrt{ \sum_{j=1}^N \frac{m^4}{M^2} W_2(\temp_1,\temp_2)^2 } = \sqrt{N} \frac{m^2}{M} W_2(\temp_1,\temp_2).
\end{align*}
Notice that as $\delta$ becomes closer to $W_2(\temp_1,\temp_2)$, we find that both $m$ and $M$ become closer to 1, which means that our set of shears become closer to the identity.  Using multiple LOT embeddings; however, we can actually use the maximal function class of shears $\mathcal{H}$ when $M = 1 + \frac{ W_2(\temp_1, \temp_2) }{2 \max_{j=1,2} \Vert \Sigma_j^{1/2} \Vert_F }$ and $m = 1 - \frac{ W_2(\temp_1, \temp_2) }{2 \max_{j=1,2} \Vert \Sigma_j^{1/2} \Vert_F }$.  To get the same separation with the largest possible function class as when we have $\delta > 0$, we need
\begin{align*}
    \sqrt{N} \Bigg( \frac{\Big(1 - \frac{ W_2(\temp_1, \temp_2) }{2 \max_{j=1,2} \Vert \Sigma_j^{1/2} \Vert_F }\Big)^2}{1 + \frac{ W_2(\temp_1, \temp_2) }{2 \max_{j=1,2} \Vert \Sigma_j^{1/2} \Vert_F }} \Bigg) &\geq \Bigg( \frac{\Big(1 - \frac{ W_2(\temp_1, \temp_2) - \delta }{2 \max_{j=1,2} \Vert \Sigma_j^{1/2} \Vert_F }\Big)^2}{1 + \frac{ W_2(\temp_1, \temp_2) - \delta }{2 \max_{j=1,2} \Vert \Sigma_j^{1/2} \Vert_F }} \Bigg).
\end{align*}
Rearranging the inequality and squaring both sides, we get the following bound for $N$
\begin{align*}
    N &\geq \Bigg(\frac{2 \max\limits_{j=1,2} \Vert \Sigma_j^{1/2} \Vert_F +   W_2(\temp_1, \temp_2)  }{ 2 \max\limits_{j=1,2} \Vert \Sigma_j^{1/2} \Vert_F +   W_2(\temp_1, \temp_2) - \delta  }\Bigg)^2  \Bigg( \frac{2\max\limits_{j=1,2} \Vert \Sigma_j^{1/2} \Vert_F - W_2(\temp_1, \temp_2) + \delta }{2\max\limits_{j=1,2} \Vert \Sigma_j^{1/2} \Vert_F - W_2(\temp_1, \temp_2)} \Bigg)^4.
\end{align*}
Thus, if needed, we can allow $\delta$ to stay small (or even become zero), which would allow us to use the maximal function class of shears $\mathcal{H}$; however, the cost of this larger function class and separation level is increasing the number of reference distributions.

\end{example}

%%%%%%%%%%%%%% Experiments %%%%%%%%%%%%%%%%%%%%%%%%%

\section{Numerical experiments}
\subsection{Binary classication of MNIST Images}
In this section we present pairwise binary classification results on sheared MNIST images which are motivated by the linear separability result presented in Corollary \ref{ShearSeparableCorollary} and also illustrate the benefit of using multiple references as indicated by lemma \ref{multipleRef}.

\subsubsection*{The LOT embedding pipeline for an image}
\label{pipeline}
\begin{enumerate}
    \item Obtain the image represented as a $n \times n$ matrix of pixel values.
    \item Assuming that the image is supported on a $n \times n$ grid on the unit square, obtain the point cloud which forms the support of the pixel values corresponding to the image.
    \item Obtain the discrete measure $\mu$ induced by the image on the unit square. Each point in the support of the image has a pixel value which (after normalization) will be the mass associated with $\mu$ .
    \item Let $\sigma$ denote a discrete reference measure \footnote{In case the desired reference is an absolutely continuous measure on the unit square, then we work with the discrete measure it induces on the $n \times n$ grid on the unit square (See Figure \ref{GaussianRefs}).}. Compute the discrete transport coupling matrix $P_{\sigma}^{\mu}$ \footnote{\href{https://pythonot.github.io/}{https://pythonot.github.io/} \cite{flamary2021pot}}. For each point $x$ in the support of  the reference $\sigma$, choose $T_{\sigma}^{\mu}(x)$ as the point in the support of $\mu$ such that $T_{\sigma}^{\mu}(x) = \text{argmax}_{y\in supp(\mu)} P_{\sigma}^{\mu}(x,y)$. Here $P(x,y)$ denotes the amount of mass transported from $x \in supp(\sigma)$ to $y \in supp(\mu)$. This is done to extract an approximate Monge map from the coupling matrix \cite{moosmueller20}.
    \item The LOT embedding of the image corresponding to the reference $\sigma$ is chosen to be $T_{\mu}^{\sigma}$. Note that $T_{\mu}^{\sigma} \in \mathbb{R}^{2m}$, where $m $ denotes the size of the size of the support $\sigma$, i.e. $m:= |supp(\sigma)|$. Henceforth this $\mathbb{R}^{2m}$ vector will be referred to as the \textit{LOT feature} corresponding to the particular image that is being embedded.
\end{enumerate}

\begin{figure}[H]
    \centering
    \includegraphics[width=\textwidth]{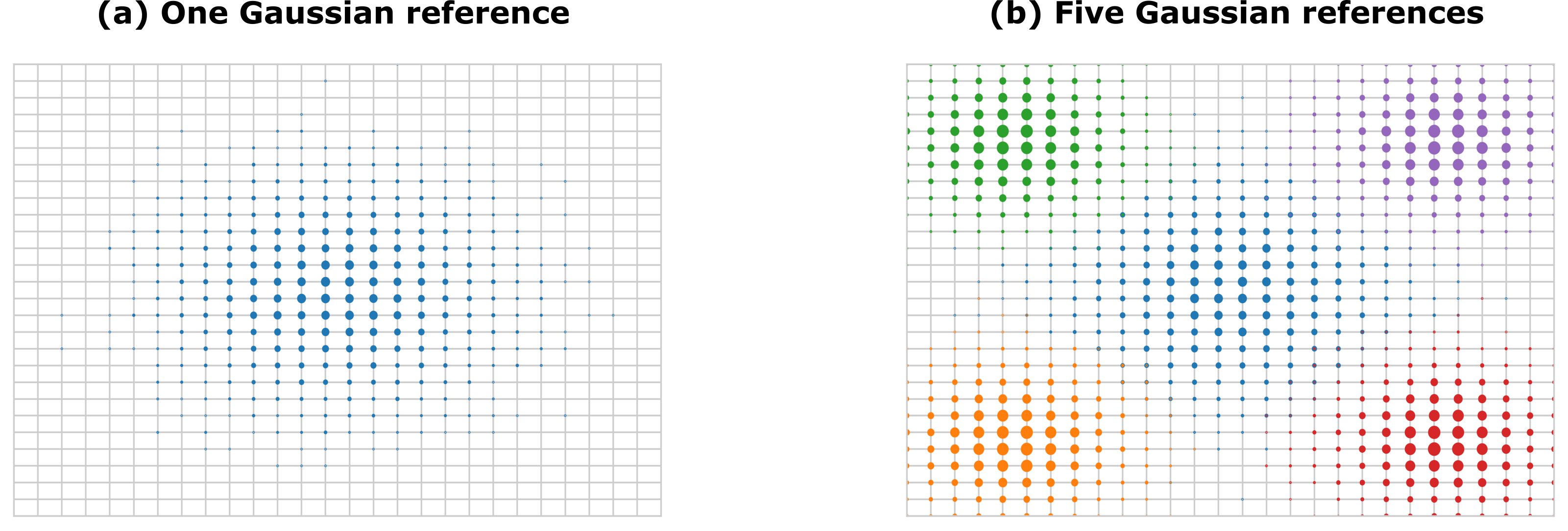}
    \caption{\scriptsize{a) A  Gaussian reference distribution approximated on a $28 \times 28$ grid}. b) Five different Gaussian distributions approximated on a $28 \times 28$ grid to be used as multiple reference for LOT embedding.}
    \label{GaussianRefs}
\end{figure}

\subsection{Experimental settings}
The MNIST images are sheared using the transformation described in Appendix \ref{shearing} and the values for each of the parameters $\lambda_1,\lambda_2,\theta, b$ are drawn randomly from a pre-fixed range for each image. We perform classification experiments for the MNIST images under two different shearing conditions (See Figure \ref{ShearedImages}). For one set of shearing conditions, termed as \textit{mild shearing} , the parameters of shearing for each image, $\lambda_1, \lambda_2$ are randomly chosen in the interval $[0.5,1.5]$, $\theta$ is randomly chosen in the interval $[0,360]$ degrees and the shifts $b$ are randomly chosen in the interval  $[-5,5]$. For the other set of shearing conditions termed as \textit{severe shearing}, the parameters of shearing for each image, $\lambda_1, \lambda_2$ are randomly chosen in the interval $[0.5,2.5]$, $\theta$ is randomly chosen in the interval $[0,360]$ degrees and the shifts $b$ are randomly chosen in the interval  $[-5,5]$. Then the \textit{LOT feature} corresponding to each of the sheared images are computed using the embedding pipeline described in subsection \ref{pipeline} and then classification experiments are performed using Linear Discriminant Analysis (LDA) \cite{hastie2009elements} \footnote{\href{https://scikit-learn.org/}{https://scikit-learn.org/}}.\\

\begin{figure}[H]
    \centering
    \includegraphics[width=\textwidth]{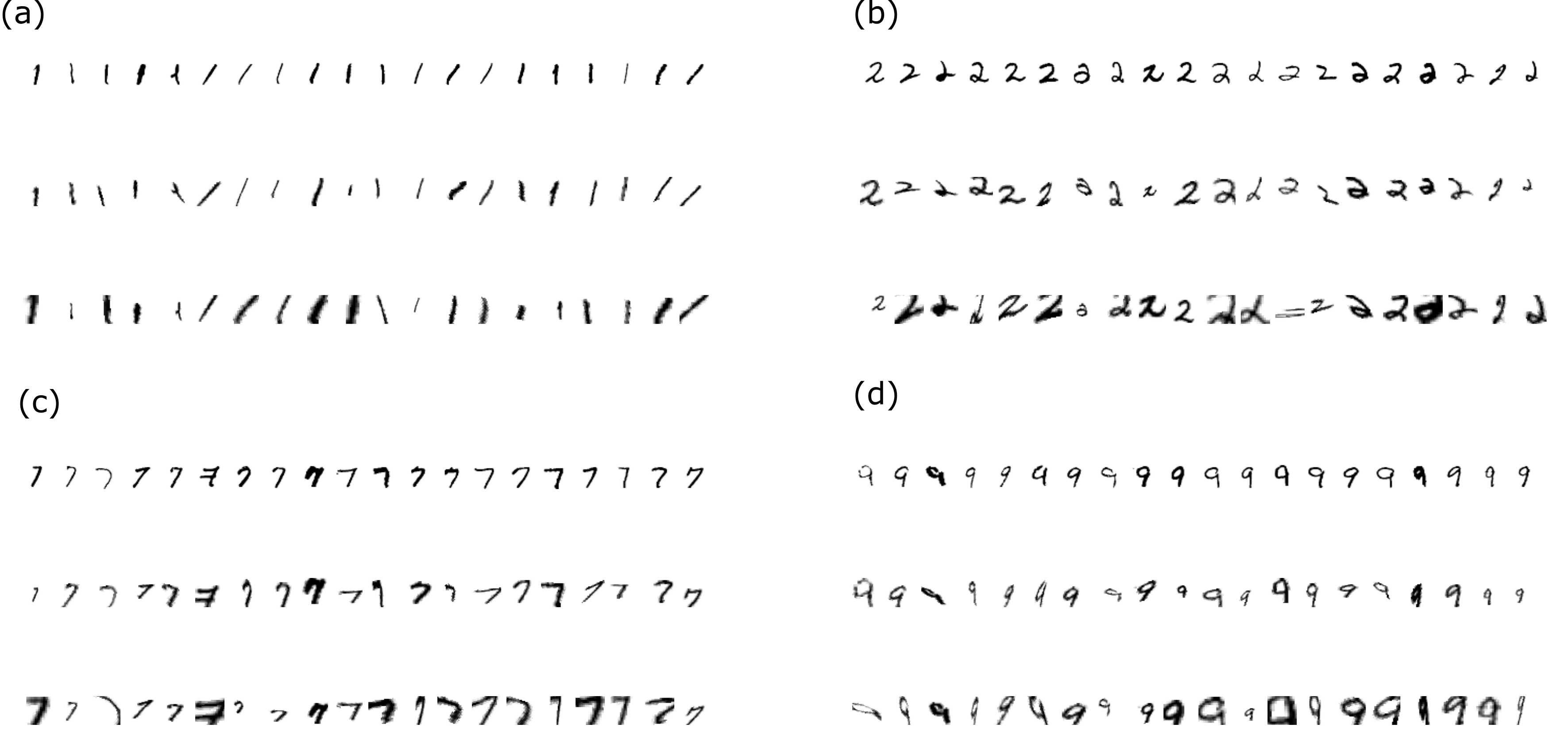}
    \caption{\scriptsize{In each figure, the first row shows the true unsheared MNIST image. The second row shows the corresponding mildly sheared MNIST image. The parameters (Appendix \ref{shearing}) of shearing for each image, $\lambda_1, \lambda_2$ are randomly chosen in the interval $[0.5,1.5]$, $\theta$ is randomly chosen in the interval $[0,360]$ degrees and the shifts $b$ are randomly chosen in the interval  $[-5,5]$. The third row shows the corresponding severely sheared MNIST image. The parameters (Appendix \ref{shearing}) of shearing for each image, $\lambda_1, \lambda_2$ are randomly chosen in the interval $[0.5,2.5]$, $\theta$ is randomly chosen in the interval $[0,360]$ degrees and the shifts $b$ are randomly chosen in the interval  $[-5,5]$}}
    \label{ShearedImages}
\end{figure}

To test the performance of LDA (Linear Discriminant Analysis) classification of two distinct classes of MNIST digits using LOT features, we study the test error of the LDA classifier as a function of the number of training images chosen for each digit. For each fixed number, $N_{train}$, of training images, we train the LDA classifier using a randomly chosen set of $N_{train}$  images from each digit class and test the classification results on a randomly chosen set of $1000$ test images from each digit class. We then repeat this experiment for each fixed $N_{train}$ using 20 different randomly chosen set of training images ($N_{train}$ images from each digit class) and $1000$ test images from each digit class. 

\subsection{Observations}
In Figure \ref{multirefImage12} we report the mean test error for classification of MNIST ones and twos and in Figure \ref{multirefImage79} we report the mean test error for classification of MNIST sevens and nines for various choices of reference distributions and under different shearing conditions. Therein for comparison, we also report the results obtained using the semi-discrete optimal transport \cite{merigot20} framework which uses a uniform reference measure. The corresponding standard deviations are reported in Appendix Figures \ref{multirefImage12std} and \ref{multirefImage79std}. We observe that the LOT framework is able to achieve low test errors with a relatively low number of training images. Moreover we see that using multiple references does indeed lead to a decrease in the classification error. Interestingly, we observe that using multiple references also helps reduce over-fitting (See Figure \ref{overfitting}). The trade-off observed is that using multiple references increases the length of the feature vector while on the other hand it leads to a decrease in the test error. 

In Figure \ref{heatmap} we illustrate as a heat-map, the mean test errors for binary classification of all pairs of MNIST digits using $50$ training images per class and for different choices of references. Also, in Table \ref{heatMapTable} we report the range of test errors and standard deviations observed across all the classification experiments corresponding to Figure \ref{heatmap}. Further in Appendix Figure \ref{CNNcomparison}, for comparison, we report the classification results for sheared MNIST 7s and 9s using convolutional neural networks with 1586 training parameters (labelled small CNN) and 3650 training parameters (labelled large CNN) under identical training and testing conditions as that of the discrete LOT classifier.
 
\begin{figure}[H]
    \centering
    \includegraphics[width=\textwidth]{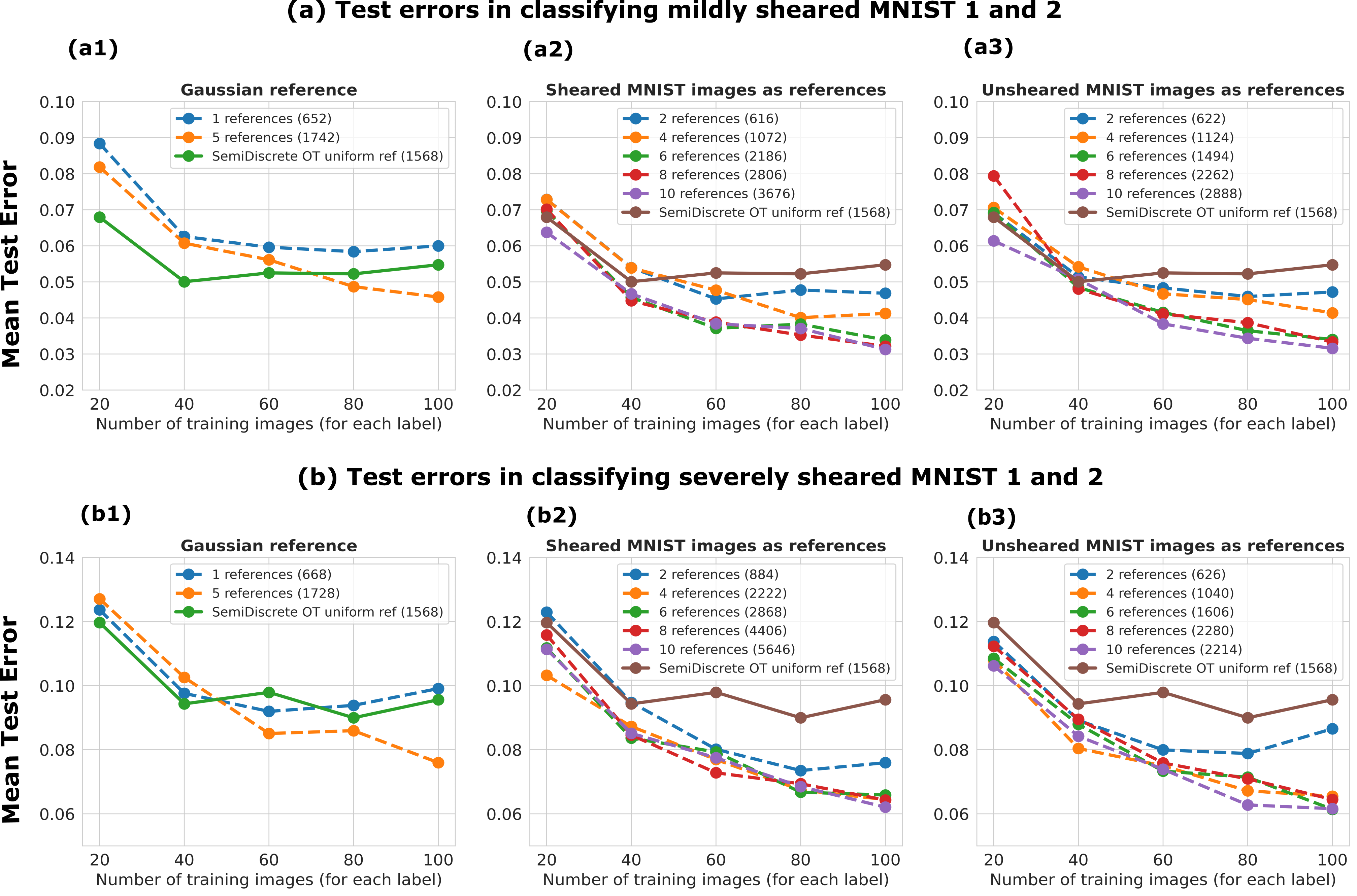}
    \caption{\scriptsize{(a) Test errors for binary classification of mildly sheared MNIST 1s and 2s using (a1) Gaussian references (a2) sheared MNIST 1s and 2s as references (a3) unsheared MNIST 1s and 2s as references. (b) Test errors for binary classification of severely sheared MNIST 1s and 2s using (b1) Gaussian references (b2) sheared MNIST 1s and 2s as references (b3) unsheared MNIST 1s and 2s as references. In the cases where MNIST images are used as references, the results are reported for the cases where the number of references used is $2i$ for $i=1, \cdots 5$ wherein $i$ images from each class are randomly drawn to be used as references from a pool of images that do not correspond to any of the training and testing images. For each fixed number of training images per class, $N_{train}$, the mean test classification error averaged across 20 random choices of $N_{train}$ training images (per class) and $1000$ test images (per class) is reported. The number inside the parenthesis in the legends of the images denote the length of the LOT feature vector corresponding to the particular choice of references. In all figures, for comparison, the results for classification using the semi discrete linear optimal transport framework \cite{merigot20} which uses the uniform measure as the reference is also reported. Standard deviations for each of the corresponding classification tests are reported in the Appendix Figure \ref{multirefImage12std}.}}
    \label{multirefImage12}
\end{figure}

\begin{figure}[H]
    \centering
    \includegraphics[width=\textwidth]{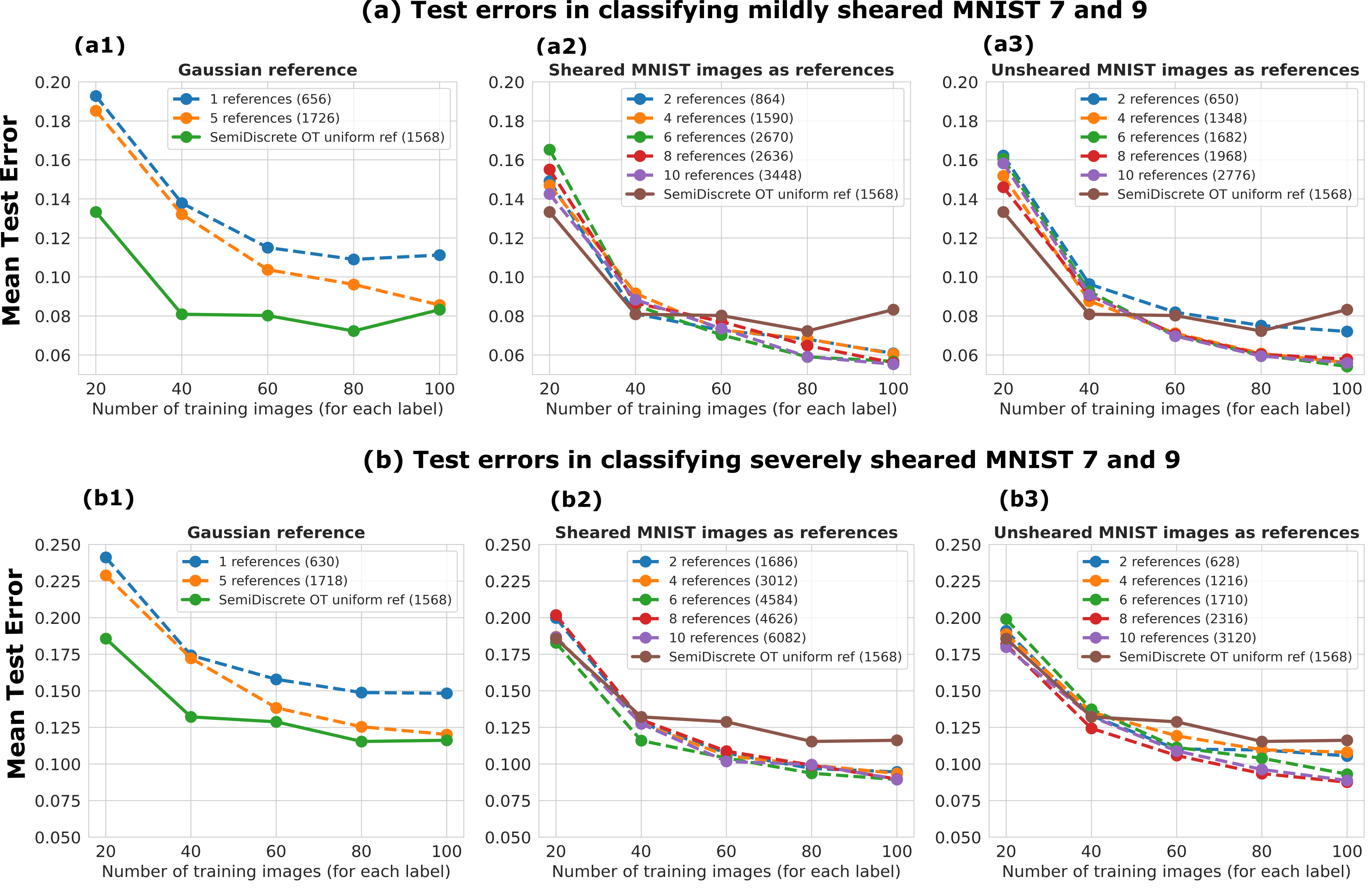}
    \caption{\scriptsize{(a) Test errors for binary classification of mildly sheared MNIST 7s and 9s using (a1) Gaussian references (a2) sheared MNIST 7s and 9s as references (a3) unsheared MNIST 7s and 9s as references. (b) Test errors for binary classification of severely sheared MNIST 7s and 9s using (b1) Gaussian references (b2) sheared MNIST 7s and 9s as references (b3) unsheared MNIST 7s and 9s as references. In the cases where MNIST images are used as references, the results are reported for the cases where the number of references used is $2i$ for $i=1, \cdots 5$ wherein $i$ images from each class are randomly drawn to be used as references from a pool of images that do not correspond to any of the training and testing images. For each fixed number of training images per class, $N_{train}$, the mean test classification error averaged across 20 random choices of $N_{train}$ training images (per class) and $1000$ test images (per class) is reported. The number inside the parenthesis in the legends of the images denote the length of the LOT feature vector corresponding to the particular choice of references. In all figures, for comparison, the results for classification using the semi discrete linear optimal transport framework \cite{merigot20} which uses the uniform measure as the reference is also reported. Standard deviations for each of the corresponding classification tests are reported in the Appendix Figure \ref{multirefImage79std}.}}
    \label{multirefImage79}
\end{figure}

\begin{figure}[H]
    \centering
    \includegraphics[width=0.5\textwidth]{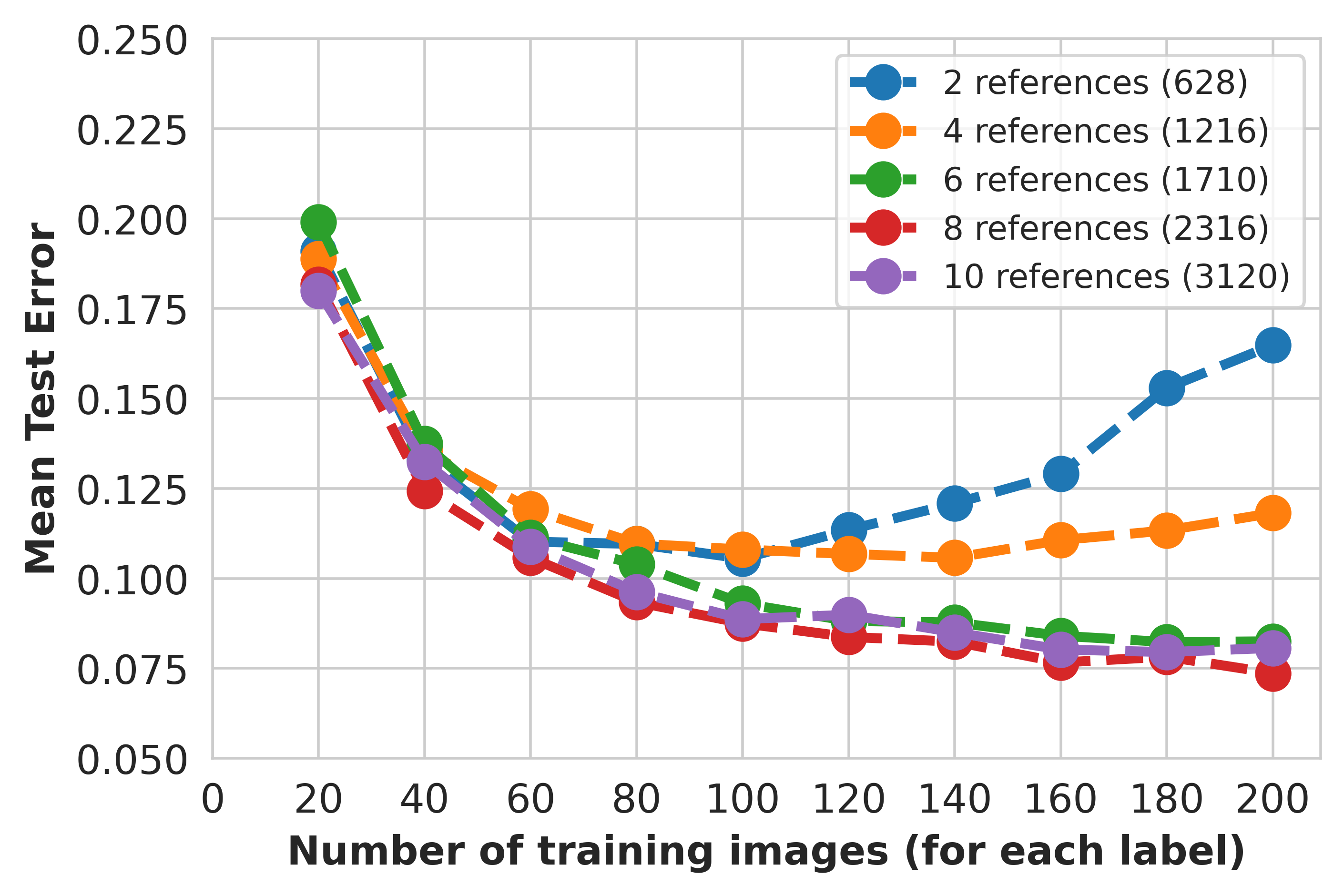}
    \caption{\scriptsize{Illustration of the benefit of using multiple references to reduce overfitting in the classification of severely sheared MNIST 7s and 9s using true MNIST images as references under the same training and testing conditions of Figure \ref{multirefImage79} (b3).}}
    \label{overfitting}
\end{figure}

\begin{figure}[H]
    \centering
    \includegraphics[width=\textwidth]{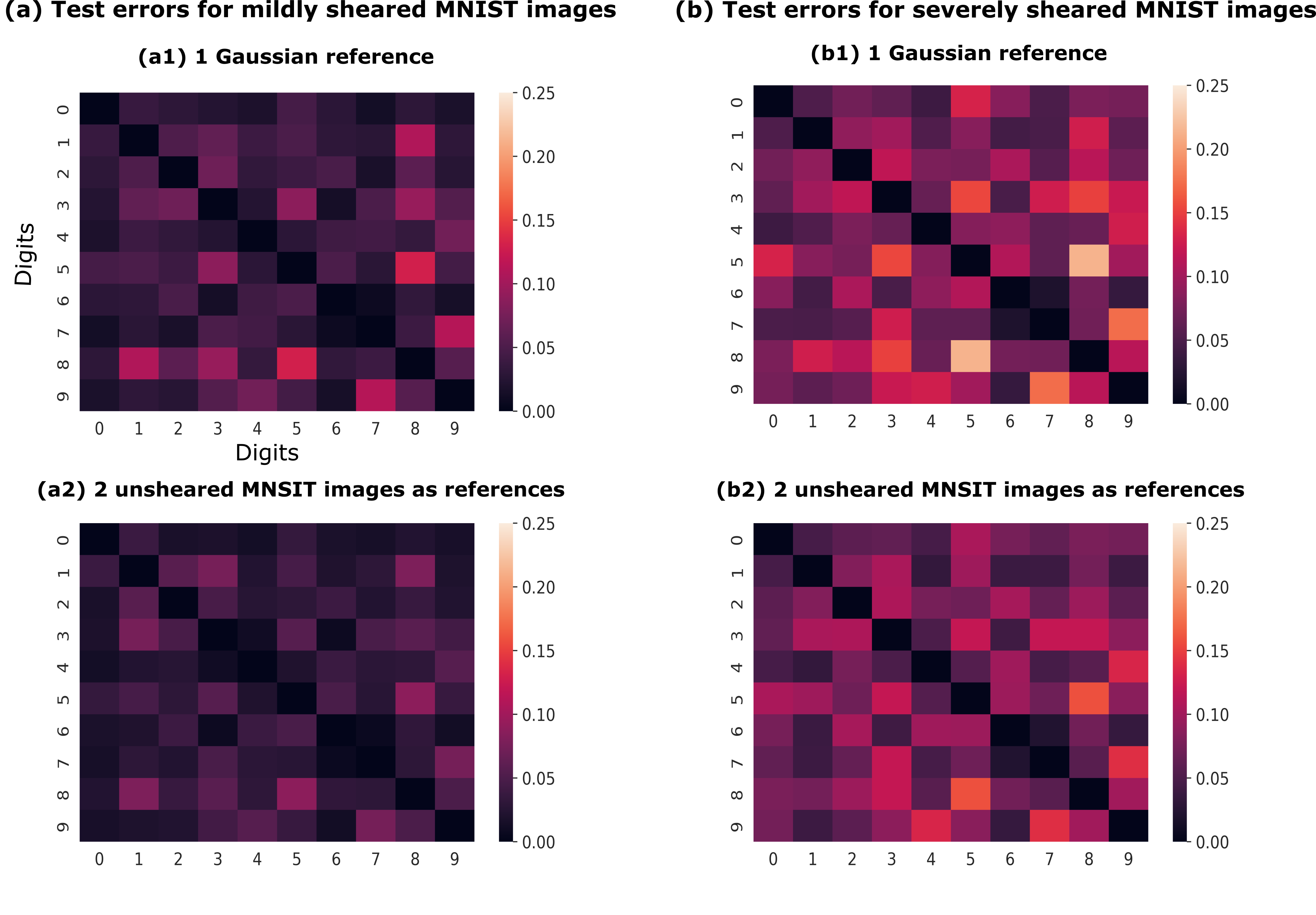}
    \caption{\scriptsize{(a) Test errors for binary classification of all pairs of mildly sheared MNIST images using (a1) one Gaussian reference (a2) two unsheared MNIST images as references. (b) Test errors for binary classification of all pairs of mildly sheared MNIST images using (b1) one Gaussian reference (b2) two unsheared MNIST images as references. For each given pair of digits, in the case of MNIST images as references (a2),(b2), one image corresponding to each class is randomly drawn to serve as the references. The reported error is a mean value 20 experiments involving different choices of 50 randomly drawn training images per class and 500 randomly drawn test images per class for each experiment. The range of standard deviations for the test errors for each case is reported in Table \ref{heatMapTable}.}}
    \label{heatmap}
\end{figure}

\begin{table}[h]
 \scriptsize
 \begin{tabular}{||c|c c |c c||} 
 \hline
 \multirow{2}{3cm}{\textbf{Reference choice}} & \multicolumn{2}{c|}{\textbf{Range of mean test errors}} & \multicolumn{2}{c||}{\textbf{Range of std.deviation in test errors}}  \\ 
 & Mild shearing & Severe shearing & Mild shearing & Severe shearing \\
 \hline
 1 Gaussian reference & $[0.0083,0.1298]$ & $[0.0198,0.2132]$ & $[0.0064,0.0291]$ & $[0.0108,0.0382]$\\
 2 unsheared MNIST references & $[0.0078,0.0880]$ & $[0.0220,0.1585]$ & $[0.0056,0.0244]$ & $[0.0111,0.0328]$\\
 \hline
 \end{tabular}
 \caption{\scriptsize{Range of mean value and standard deviations of test errors for pairwise classification of sheared MNIST images across all pairs of digits for various reference choices. The reported values are across 20 experiments involving different choices of 50 randomly drawn training images per class and 500 randomly drawn test images per class for each experiment.}}
 \label{heatMapTable}
\end{table}

\begin{figure}[h]
    \centering
    \includegraphics[width=\textwidth]{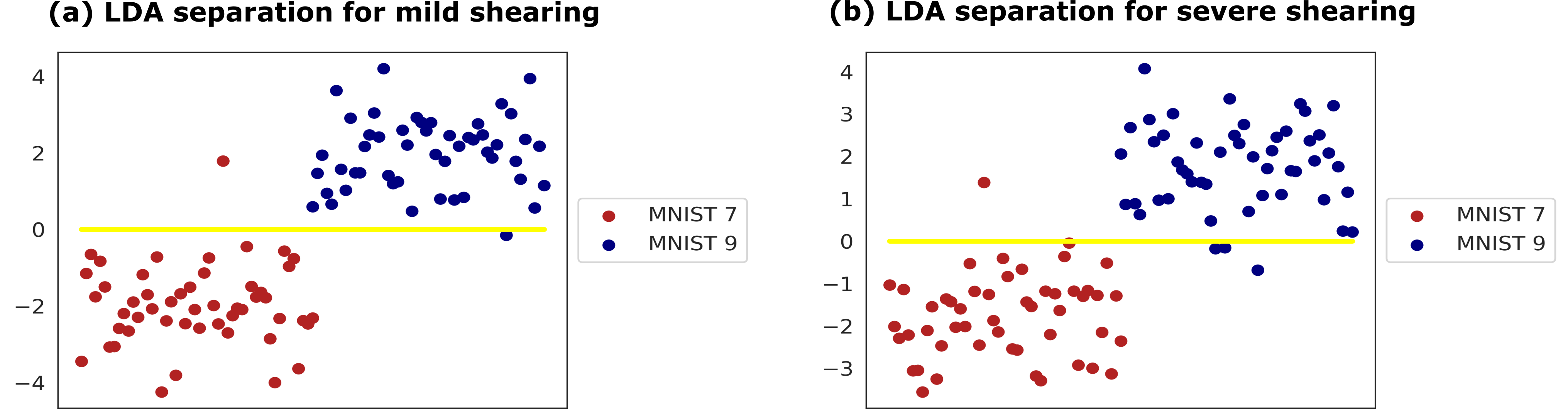}
    \caption{\footnotesize{Visualization of separation in the LDA projection using LOT features with $2$ unsheared MNIST images as references for 50 training images per class corresponding to (a) mildly sheared MNIST sevens and nines (b) severely sheared MNIST sevens and nines. The y-axis denotes the value of the projection onto the LDA separating line for the two classes.}}
    \label{severeLDA}
\end{figure}

%%%%%%%%%% ACKNOWLEDGEMENTS %%%%%%%%%%%%%%%%
\subsection*{Acknowledgements}
This research is supported by NSF awards DMS-1819222 and DMS-2012266, by Russell Sage Foundation Grant 2196 (to AC), and by NSF award DMS-2111322 (to CM).

%%%%%%%%%%%%% BIBLIOGRAPHY %%%%%%%%%%%%%%%

\bibliographystyle{abbrv}
%\bibliographystyle{plainnat}
%\bibliography{lit}

%%%%%%%%%%%%% APPENDIX A %%%%%%%%%%%%%%%
\begin{appendix}
\section{Compatibility Condition Proofs}

%%%%%%%% PLEASE ADD REFERENCE %%%%%%%%%%%%%%%%%%%%%%

\begin{lemma}\label{invariantLemma}
Suppose $V$ is a finite-dimensional vector space, $\phi: V \to V$ is a diagonalizable linear map, and $U \subseteq V$ is a $\phi$-invariant subspace.  Then the restriction $\phi\vert_U: U \to U$ is diagonalizable.
\end{lemma}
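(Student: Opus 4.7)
The plan is to invoke the minimal polynomial characterization of diagonalizability: a linear operator on a finite-dimensional space is diagonalizable if and only if its minimal polynomial is a product of distinct linear factors. First I would let $m_\phi(x)$ denote the minimal polynomial of $\phi$. Since $\phi$ is diagonalizable, $m_\phi(x) = \prod_{i=1}^k (x - \lambda_i)$ for distinct scalars $\lambda_1, \dots, \lambda_k$.

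Next I would show that $m_\phi$ annihilates the restriction $\phi|_U$. This uses only the hypothesis that $U$ is $\phi$-invariant, which guarantees that $\phi|_U : U \to U$ is a well-defined linear operator and that $p(\phi)|_U = p(\phi|_U)$ for every polynomial $p$. Since $m_\phi(\phi) = 0$ on all of $V$, in particular $m_\phi(\phi|_U) = 0$ on $U$, so the minimal polynomial $m_{\phi|_U}$ of $\phi|_U$ divides $m_\phi$ in the polynomial ring over the base field.

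Finally, I would observe that any divisor of a polynomial that splits into distinct linear factors itself splits into distinct linear factors (because unique factorization in the polynomial ring forces the divisor to be a product of a subset of the $(x-\lambda_i)$). Hence $m_{\phi|_U}$ is a product of distinct linear factors, so by the characterization recalled above, $\phi|_U$ is diagonalizable.

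The main subtle point, rather than an obstacle, is the step that $m_\phi(\phi|_U) = 0$ implies divisibility $m_{\phi|_U} \mid m_\phi$; this is a standard consequence of the definition of the minimal polynomial as the monic generator of the ideal of annihilating polynomials, so I would just cite it. An alternative route, worth a sentence if space allows, is to decompose $V = \bigoplus_{i=1}^k E_{\lambda_i}$ into $\phi$-eigenspaces, use the Vandermonde trick on $u, \phi u, \dots, \phi^{k-1} u$ to show $U = \bigoplus_{i=1}^k (U \cap E_{\lambda_i})$, and conclude diagonalizability of $\phi|_U$ directly from this eigenspace decomposition.
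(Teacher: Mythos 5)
Your proof is correct, but it takes a genuinely different route from the paper. You argue via the minimal polynomial: since $U$ is $\phi$-invariant, $p(\phi)\vert_U = p(\phi\vert_U)$ for every polynomial $p$, hence the minimal polynomial of $\phi\vert_U$ divides that of $\phi$; the latter is a product of distinct linear factors because $\phi$ is diagonalizable, any divisor of such a polynomial is again a product of distinct linear factors, and the minimal-polynomial criterion then gives diagonalizability of $\phi\vert_U$. The paper instead gives a hands-on, self-contained argument that is essentially the ``alternative route'' you sketch in your last sentence: it writes each basis vector $u$ of $U$ as a sum of eigencomponents $w_i \in E(\lambda_i,\phi)$, applies the operators $\Phi_i = \prod_{j \neq i}(\lambda_j I - \phi\vert_U)$ (which make sense on $U$ by invariance) to isolate each $w_i$ up to the nonzero scalar $\prod_{j\neq i}(\lambda_j - \lambda_i)$, concludes $w_i \in U$, and thereby assembles a spanning set of eigenvectors of $\phi\vert_U$ inside $U$ — in effect proving $U = \bigoplus_i \bigl(U \cap E(\lambda_i,\phi)\bigr)$. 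Your version is shorter and leans on standard cited facts (ideal of annihilating polynomials, unique factorization, the squarefree criterion), while the paper's version is elementary and explicitly constructs an eigenbasis of $U$; for the way the lemma is used later (in the simultaneous-diagonalization theorem), either proof suffices, since only the existence of an eigenbasis of the restriction is needed.
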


\begin{proof}
Let $\lambda_1, \dotsc, \lambda_k$ be distinct eigenvalues of $\phi$.  We will denote by $E(\lambda_k, \phi)$ the eigenspace of $\phi$ corresponding to eigenvalue $\lambda_k$.  Since $\phi$ is diagonalizable over $V$, we can represent $V$ as a direct sum
\begin{align*}
    V = E(\lambda_1, \phi) \oplus \dotsb \oplus E(\lambda_m, \phi).
\end{align*}
This means exactly that any vector $v$ is given by
\begin{align*}
    v = w_1 + \dotsb + w_m
\end{align*}
where $w_i \in E(\lambda_i, \phi)$.  As $U$ is a finite dimensional vector space, we know that there exists a basis for $U$ given by $\{u_1, \dotsc, u_k\}$.  Let us consider the linear map
\begin{align*}
    \Phi_i(u) = \prod\limits_{\substack{ j = 1,\dotsc, m \\ j \neq i} } (\lambda_j I - \phi\vert_U ) u.
\end{align*}
Note that this linear map is commutative in its order of composition.  We now will take every basis vector $u_i$ and represent it in terms of eigenvector.  Note that because $u_i$ is a vector in $V$, we find that there exists eigenvectors $w_{1,1} \in E(\lambda_1, \phi), \dotsc, w_{1,m} \in E(\lambda_m, \phi)$ such that
\begin{align*}
    u_1 = w_{1,1} + w_{1,2} + \dotsc, w_{1,m}.
\end{align*}
Now let us create a set $\widehat{W}_1 = \{w_{1,1}, \dotsc, w_{1,2}\}$.  Note that
\begin{align*}
    \Phi_i(u_1) = \prod\limits_{\substack{ j = 1,\dotsc, m \\ j \neq i} } (\lambda_j  - \lambda_i ) w_{1,i} \implies w_{1,i} \in U,
\end{align*}
since $U$ is $\phi$-invariant.  Because this happens for arbitrary $i$, we know that $w_{1,i} \in U$ for all $i$.  Note, that this set is linearly independent since each $w_{1,i}$ comes from a different eigenspace.  We repeat this for $u_j$ to obtain $\widehat{W}_j$, and note that $\widehat{W}_j \subseteq U$.  Now, let us define $\bigcup\limits_{j=1}^k \widehat{W}_j = \widehat{W}$.  Note that this is a spanning set of eigenvectors for $U$, and we can make this into a linearly independent set that still spans $U$ by throwing away the linearly dependent vectors.  Note that because of finite dimensionality, this process will stop, and will yield a linearly independent, spanning set of $U$, let's call it $\widehat{W}$, consisting of eigenvectors.  So this means that $\phi\vert_U$ is diagonalizable since we found an eigenbasis for $U$.  So we're done.
\end{proof}
The following theorem is a fundamental result from matrix analysis (see \cite[Theorem 1.3.12]{hornandjohnson}), but we provide a proof for convenience of the reader.
\begin{theorem}\label{prelimThm}
Let $A$ and $B$ be two $n \times n$ diagonalizable matrices that commute (i.e. $AB = BA$).  Then there exists a basis of $\RR^n$ consisting of simultaneous eigenvectors of $A$ and $B$.
\end{theorem}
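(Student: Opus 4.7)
The plan is to use the classical approach of decomposing $\RR^n$ into $A$-eigenspaces and then diagonalizing $B$ on each piece, leveraging \Cref{invariantLemma} proved just before this theorem. Since $A$ is diagonalizable, write $\RR^n = E(\lambda_1, A) \oplus \dotsb \oplus E(\lambda_k, A)$, where $\lambda_1, \dotsc, \lambda_k$ are the distinct eigenvalues of $A$. The first step I would carry out is to verify that each eigenspace $E(\lambda_i, A)$ is $B$-invariant; this uses the commutation relation $AB = BA$ in the one-line argument that for $v \in E(\lambda_i, A)$, one has $A(Bv) = B(Av) = \lambda_i (Bv)$, so $Bv \in E(\lambda_i, A)$.

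Next, I would invoke \Cref{invariantLemma} applied to the diagonalizable map $B: \RR^n \to \RR^n$ and the $B$-invariant subspace $E(\lambda_i, A)$. The conclusion of that lemma is exactly that the restriction $B\vert_{E(\lambda_i, A)}$ is diagonalizable on $E(\lambda_i, A)$. Consequently, for each $i$ I can choose a basis $\mathcal{B}_i \subseteq E(\lambda_i, A)$ of $E(\lambda_i, A)$ consisting of eigenvectors of $B$. Every vector in $\mathcal{B}_i$ is by construction an eigenvector of $A$ with eigenvalue $\lambda_i$, and simultaneously an eigenvector of $B$.

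Finally, I would assemble $\mathcal{B} := \mathcal{B}_1 \cup \dotsb \cup \mathcal{B}_k$. Since the eigenspaces $E(\lambda_i, A)$ are linearly independent (their sum is direct) and each $\mathcal{B}_i$ is a basis of the corresponding summand, the union $\mathcal{B}$ is linearly independent and has cardinality $\sum_i \dim E(\lambda_i, A) = n$, hence is a basis of $\RR^n$. Every element of $\mathcal{B}$ is a simultaneous eigenvector of $A$ and $B$, completing the proof.

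The only nontrivial ingredient is the passage from diagonalizability on the whole space to diagonalizability on an invariant subspace — but this has already been established in \Cref{invariantLemma}, so in the present proof it is simply invoked. The rest is bookkeeping with direct sums and the commutation identity; I do not anticipate any genuine obstacles beyond the careful statement that commutation forces $B$-invariance of the $A$-eigenspaces.
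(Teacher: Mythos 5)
Your proof is correct and follows essentially the same route as the paper's: establish $B$-invariance of each $A$-eigenspace via the commutation relation, apply \Cref{invariantLemma} to diagonalize $B$ restricted to each eigenspace, and concatenate the resulting eigenbases over the direct sum $\RR^n = \bigoplus_i E(\lambda_i, A)$. No gaps; the argument matches the paper's proof step for step.
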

\begin{proof}
We break this proof up into two parts.  First we will show that given an eigenvector $\lambda$ the eigenspace of $A$ corresponding to $\lambda$ (we denote this with $E(\lambda, A)$ is $B$-invariant.  Consider $v \in E(\lambda, A)$, then notice that
\begin{align*}
    A B v = B A v = B (\lambda v) = \lambda B v.
\end{align*}
This means that $Bv$ is an eigenvector for $A$ with eigenvalue $\lambda$, which means that $E(\lambda,A)$ is $B$-invariant since $B$ maps elements of $E(\lambda,A)$ back into $E(\lambda,A)$.  Now we show that there exists a basis for $\RR^n$ consisting of simultaneous eigenvectors of $A$ and $B$.

Note that because $A$ is diagonalizable, we know that $\RR^n$ can be represented as a direct sum given by
\begin{align*}
    \RR^n = \bigoplus\limits_{i=1}^k E(\lambda_i, A),
\end{align*}
where $\lambda_1, \dotsc, \lambda_k$ are distinct eigenvalues of $A$.  Now to show that there exists a basis of $\RR^n$ consisting of simultaneous eigenvectors of $A$ and $B$, we only need to find a basis for each subspace $E(\lambda, A)$ because the concatenation of all these bases will yield a basis for $\RR^n$.  Now note that since $E(\lambda,A)$ is a $B$-invariant space by above and because $B$ is diagonalizable, we know from \Cref{invariantLemma} that the restriction of $B$ to this eigenspace, $B\vert_{E(\lambda, \phi)}$, is diagonalizable, which means that there exists an eigenbasis of $E(\lambda,A)$ for the map $B$.  Let us call this this eigenbasis $S_{\lambda,A} = \{w_1, \dotsc, w_j\}$, where $j$ is the dimension of $E(\lambda, A)$.  Now, note that $S_{\lambda,A}$ consists of eigenvectors of both $B$ and $A$.  To see this, note that $S_{\lambda,A} \subseteq E(\lambda, A)$; thus, every $w_i$ is an eigenvector of $A$.  Moreover, $S_{\lambda, A}$ is an eigenbasis for $B\vert_{E(\lambda, A)}$ by construction (from \Cref{invariantLemma}).  This means that
\begin{align*}
    S = \bigcup\limits_{i=1}^k S_{\lambda_i, A}
\end{align*}
forms a basis for $\RR^n$ consisting of simultaneous eigenvectors of $A$ and $B$.
\end{proof}

\begin{lemma}\label{q=p}
If two symmetric matrices $A$ and $B$ commute, then there exists spectral decompositions $A = Q^\top \Lambda Q$ and $B = P^\top D P$ such that the rows of $Q$ are the same as the rows of $P$ up to a permutation.
\end{lemma}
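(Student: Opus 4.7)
The plan is to leverage \Cref{prelimThm} together with the orthogonality of eigenspaces that symmetry provides, and then observe that the statement about ``rows equal up to a permutation'' is essentially trivial once a single simultaneous orthonormal eigenbasis is constructed.

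First, since $A$ and $B$ are symmetric, they are diagonalizable, so \Cref{prelimThm} yields a basis of $\RR^n$ consisting of simultaneous eigenvectors of $A$ and $B$. I would then refine this into an orthonormal simultaneous eigenbasis. The standard way is to decompose $\RR^n$ into the distinct eigenspaces of $A$, which are mutually orthogonal because $A$ is symmetric (if $Av=\lambda v$ and $Aw=\mu w$ with $\lambda\neq\mu$, then $\lambda\langle v,w\rangle = \langle Av,w\rangle = \langle v,Aw\rangle = \mu\langle v,w\rangle$, forcing $\langle v,w\rangle=0$). Each such eigenspace $E(\lambda,A)$ is $B$-invariant (by the first part of the proof of \Cref{prelimThm}), and the restriction $B|_{E(\lambda,A)}$ is symmetric with respect to the induced inner product, hence admits an orthonormal eigenbasis (e.g.\ by the spectral theorem applied within the subspace, or equivalently by Gram--Schmidt applied to the simultaneous eigenbasis furnished by \Cref{prelimThm}, using that vectors in different $B$-eigenspaces of $E(\lambda,A)$ are automatically orthogonal). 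Concatenating over the finitely many $A$-eigenspaces gives a single orthonormal basis $\{u_1,\dots,u_n\}$ of $\RR^n$ in which every $u_i$ is simultaneously an eigenvector of $A$ (with some eigenvalue $\lambda_i$) and of $B$ (with some eigenvalue $d_i$).

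Next, let $U \in \RR^{n\times n}$ be the orthogonal matrix whose $i$-th row is $u_i^\top$. Then
\begin{equation*}
    A = U^\top \Lambda U, \qquad B = U^\top D U,
\end{equation*}
where $\Lambda = \diag(\lambda_1,\dots,\lambda_n)$ and $D = \diag(d_1,\dots,d_n)$. Taking $Q = U$ and $P = U$ gives spectral decompositions of $A$ and $B$ whose rows coincide (in particular, coincide up to the identity permutation), proving the claim. If one prefers to emphasize the permutational freedom, one can equally take $P = \Pi U$ for any permutation matrix $\Pi$, which is still orthogonal, and replace $D$ by $\Pi D \Pi^\top$; the rows of $P$ are then a permutation of the rows of $Q$ and the spectral decomposition still holds.

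The only genuinely nontrivial step is the refinement from a simultaneous eigenbasis to an \emph{orthonormal} simultaneous eigenbasis; the main obstacle there is that the simultaneous eigenbasis of \Cref{prelimThm} need not be orthonormal a priori, so one must use symmetry of $A$ (to get orthogonality between different $A$-eigenspaces) together with symmetry of $B|_{E(\lambda,A)}$ (to orthonormalize within each $A$-eigenspace). Once this is done, choosing the same orthogonal matrix on both sides makes the ``rows up to permutation'' requirement immediate.
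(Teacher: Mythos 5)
Your proof is correct and follows essentially the same route as the paper's: both rest on \Cref{prelimThm} (simultaneous diagonalization of the commuting matrices) and then read off spectral decompositions whose rows agree up to permutation. In fact, your explicit refinement step -- orthogonality of distinct $A$-eigenspaces plus the spectral theorem applied to $B|_{E(\lambda,A)}$ within each eigenspace -- makes precise a point the paper's proof leaves implicit, namely that the simultaneous eigenbasis can be chosen orthonormal so that $Q$ and $P$ are genuinely orthogonal matrices.
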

\begin{proof}
We already know that if two diagonalizable matrices commute, then they share the same eigenvectors; thus, there exist an eigendecomposition for $A$ and $B$ with the same eigenvectors.  By extension, this holds for symmetric matrices.  If we assume that these eigendecompositions are given by $A = Q^\top \Lambda Q$ and $B = P^\top D P$, the eigenvectors of $A$ are exactly the columns of $Q^\top$, and similarly, the eigenvectors of $B$ are exactly the columns of $P^\top$.  This implies that the columns of $Q^\top$ and $P^\top$ should be the same.  The order of the columns can be permuted without loss of generality and still provide the same transformation $A$ and $B$.  Thus, we can assume that $Q$ has the same rows as $P$.
\end{proof}

\begin{theorem}\label{main}
Let $S: \RR^n \to \RR^n$ be a differentiable map such that $S = \nabla \varphi$ for some $\varphi$.  Let $\refe, \temp \in \mathcal{P}_2(\RR^n)$ with $\refe$ absolutely continuous with respect to the Lebesgue measure.  Assume that the compatibility condition $S \circ T_\refe^\temp = T_\refe^{S_\sharp \temp}$ holds.  Then $J_S(x)$ is a symmetric positive definite matrix for all $x$.  Moreover, $J_S(T_\refe^\temp(x))$, $J_{T_\refe^\temp}(x)$, and $J_{T_\refe^{S_\sharp \temp}}(x)$ share the same eigenspaces.  Furthermore, the eigenvalues of $J_S(T_\refe^\temp(x))$ are of the form $\frac{\lambda_{\refe,\temp}}{\lambda_{\refe, S_\sharp \temp}}$ where $\lambda_{\refe, \temp}$ is an eigenvalue of $J_{T_\refe^\temp}(x)$ and $\lambda_{\refe, S_\sharp \temp}$ is an eigenvalue of $J_{T_\refe^{S_\sharp \temp}}(x)$.
\end{theorem}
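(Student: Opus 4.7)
The plan is to differentiate the compatibility condition pointwise and then exploit the symmetry imposed on each Jacobian both by Brenier's theorem and by the gradient-of-a-potential hypothesis on $S$. Applying the chain rule to $S \circ T_\refe^\temp = T_\refe^{S_\sharp \temp}$ yields the pointwise identity
\begin{equation*}
J_S\bigl(T_\refe^\temp(x)\bigr)\, J_{T_\refe^\temp}(x) \;=\; J_{T_\refe^{S_\sharp \temp}}(x).
\end{equation*}
Since $\refe \ll \lambda$, Brenier's theorem (\Cref{Brenier}) gives $T_\refe^\temp = \nabla \psi_1$ and $T_\refe^{S_\sharp \temp} = \nabla \psi_2$ for convex potentials $\psi_1, \psi_2$, so the two Jacobians $J_{T_\refe^\temp}$ and $J_{T_\refe^{S_\sharp \temp}}$ are Hessians of convex functions, hence symmetric and positive semidefinite. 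Likewise, the hypothesis $S = \nabla \varphi$ with $\varphi$ twice differentiable makes $J_S = \nabla^2 \varphi$ symmetric.

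Next, I would transpose the Jacobian identity and use the symmetry of all three factors to obtain $J_{T_\refe^\temp}(x)\, J_S(T_\refe^\temp(x)) = J_{T_\refe^{S_\sharp \temp}}(x)$, which combined with the original identity shows the two factors on the left-hand side commute:
\begin{equation*}
J_S\bigl(T_\refe^\temp(x)\bigr)\, J_{T_\refe^\temp}(x) \;=\; J_{T_\refe^\temp}(x)\, J_S\bigl(T_\refe^\temp(x)\bigr).
\end{equation*}
Invoking the commuting-symmetric-matrix results (\Cref{prelimThm}, \Cref{q=p}), these two matrices share an orthonormal eigenbasis, giving the shared-eigenspaces conclusion for $J_S(T_\refe^\temp(x))$ and $J_{T_\refe^\temp}(x)$. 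Their product $J_{T_\refe^{S_\sharp \temp}}(x)$ is then simultaneously diagonalizable in the same basis, with eigenvalues equal to the products of the corresponding eigenvalues of the two factors, extending the shared-eigenbasis conclusion to all three matrices.

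Reading off this product-of-eigenvalues relation on each common eigenvector yields the eigenvalue identity claimed in the theorem: each eigenvalue of $J_S(T_\refe^\temp(x))$ equals the ratio of the corresponding eigenvalues of $J_{T_\refe^{S_\sharp \temp}}(x)$ and $J_{T_\refe^\temp}(x)$. Since both of these are Hessians of Brenier potentials, their eigenvalues are nonnegative, and strict positivity of the eigenvalues of $J_{T_\refe^\temp}$ then forces each ratio to be strictly positive, so $J_S$ is symmetric positive definite. The main obstacle I anticipate is justifying strict (as opposed to merely non-strict) positive definiteness of $J_{T_\refe^\temp}$ and $J_{T_\refe^{S_\sharp \temp}}$, which is needed both to make the eigenvalue ratios well defined and to conclude strict positive definiteness of $J_S$. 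Handling this requires using $\refe \ll \lambda$ together with regularity of the Brenier potentials via the Monge--Amp\`ere equation $\det(\nabla^2 \psi_i) = f_\refe / (f_{\temp_i} \circ \nabla \psi_i)$ to rule out vanishing eigenvalues at the point in question.
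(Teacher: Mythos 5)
Your proposal is correct and follows essentially the same route as the paper's own proof: differentiate the compatibility condition via the chain rule, use Brenier's theorem and the hypothesis $S=\nabla\varphi$ to make all three Jacobians symmetric, transpose the resulting identity to deduce that $J_S(T_\refe^\temp(x))$ and $J_{T_\refe^\temp}(x)$ commute, and then apply the simultaneous-diagonalization results (\Cref{prelimThm}, \Cref{q=p}) to read off the shared eigenspaces and the eigenvalue ratios. Your closing remark about strict versus non-strict positive definiteness of the Brenier Hessians is in fact more careful than the paper, which simply asserts positive definiteness; just note that your proposed Monge--Amp\`ere fix additionally presupposes densities for $\temp$ and $S_\sharp\temp$, which the theorem's hypotheses do not provide.
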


\begin{proof}[Proof of \cref{main}]\label{proofOfMain}  Recall that the main equation for us to study is
\begin{align*}
    S \circ T_\refe^\temp = T_\refe^{S_\sharp \temp}.
\end{align*}
By \Cref{Brenier}, there exist convex functions $\gamma$ and $\phi$ such that $T_\refe^\temp = \nabla \phi$ and $T_\refe^{S_\sharp \temp} = \nabla \gamma$.  By Clairaut's theorem (or the Schwarz theorem), $\nabla^2 \gamma(x)$ and $\nabla^2 \phi(x)$ are symmetric.  Using the multivariate chain rule and the symmetry of $\nabla^2 \gamma(x)$, we get that
\begin{align*}
    \nabla^2 \gamma(x) &= J_S( \nabla \phi(x)) \nabla^2 \phi(x) \\
    \nabla^2 \gamma(x)^\top &= (\nabla^2 \phi(x))^\top  J_S( \nabla \phi(x))^\top \\
    &= \nabla^2 \phi(x) J_S(\nabla \phi(x))^\top.
\end{align*}
Since $J_S = \nabla^2 \varphi$ for some $\varphi$, then $J_S^\top(x) = J_S(x)$ for all $x \in \RR^d$.  Since $J_S(\nabla \phi(x))$ and $\nabla^2 \phi(x)$ are symmetric  matrices that commute, according to \Cref{q=p}, there exists some orthogonal matrix $P$ such that we can write the eigendecompositions of $\nabla^2 \phi(x)$ and $J_S(\nabla \phi(x))$ as $\nabla^2 \phi(x) = P^\top \Lambda_\phi(x) P$ and $J_S(\nabla \phi(x)) = P^\top \Lambda_S(\nabla \phi(x)) P$ where the matrices $\Lambda_\phi$ and $\Lambda_S$ are diagonal matrices with the eigenvalues of $\nabla^2 \phi(x)$ and $J_S(\nabla \phi(x))$, respectively.  Moreover, if $\Lambda_\gamma$ denotes the diagonal matrix in the eigendecomposition for $\gamma$, then our matrix equations above can be written as
\begin{align*}
    \nabla^2 \gamma(x) &= J_S( \nabla \phi(x)) \nabla^2 \phi(x) \\
    P^\top \Lambda_\gamma(x) P &= P^\top \Lambda_S(\nabla \phi(x)) P P^\top  \Lambda_\phi(x) P \\
    \Lambda_\gamma(x) &= \Lambda_S(\nabla \phi(x)) \Lambda_\phi(x).
\end{align*}
This immediately shows that every eigenvalue $\lambda_S$ of $J_S(\nabla \phi(x))$ can be written as $\frac{\lambda_\gamma}{\lambda_\phi}$, where $\lambda_\gamma$ is an eigenvalue of $\nabla^2 \gamma(x)$ and $\lambda_\phi$ is an eigenvalue of $\nabla^2 \phi(x)$.  Since $\nabla^2 \phi(x)$ and $\nabla^2 \gamma(x)$ are Hessians of a convex function, they must be positive definite.  This implies that all the eigenvalues of $J_S(\nabla \phi(x))$ are positive.  Since $J_S(\nabla \phi(x))$ is symmetric, we immediately get that $J_S( \nabla \phi(x)) = \nabla^2 \varphi ( \nabla \phi(x) )$ is a symmetric positive definite matrix, which means that $\varphi$ must have been convex.  This implies that $S = \nabla \varphi$ is a transport map.
\end{proof}

\begin{lemma}\label{lemmaForShear}
Let an optimal transport map be given by $\nabla \phi(x)$ for some convex function $\phi$.  If the Hessian $\nabla^2 \phi(x)$ has a spectral decomposition that does not depend on $x$ (i.e. $P^\top D(x) P$ for a positive diagonal matrix $D(x)$), then the map $P \nabla \phi( P^\top x)$ has a diagonal Jacobian and each component of $P \nabla \phi( P^\top x)$ is a function of only a single variable.

%In this case, we can effectively assume that $P = I$ since we can apply appropriate change of bases to get to this form.
\end{lemma}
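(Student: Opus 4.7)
The plan is to define the auxiliary map $g(x) := P \nabla \phi(P^\top x)$ and compute its Jacobian directly by the chain rule. Since $P$ is a constant matrix, one gets $J_g(x) = P \cdot \nabla^2\phi(P^\top x) \cdot P^\top$, and substituting the hypothesized spectral decomposition $\nabla^2\phi(y) = P^\top D(y) P$ yields
\begin{equation*}
    J_g(x) \;=\; P \, P^\top D(P^\top x) P \, P^\top \;=\; D(P^\top x),
\end{equation*}
where the $PP^\top = I$ collapses follow from orthogonality of $P$. This immediately proves the first claim that $J_g$ is diagonal (in fact, $J_g$ is positive diagonal since $D$ is).

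For the second claim, I would read off from $J_g(x) = D(P^\top x)$ that $\partial_j g_i(x) = 0$ whenever $i \neq j$. Since the domain $\RR^n$ is connected, this vanishing of mixed partials forces each coordinate function $g_i$ to be constant in every variable $x_j$ with $j \neq i$, hence $g_i(x) = g_i(x_i)$ is a function of the single variable $x_i$ alone.

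I do not expect any real obstacle here; the entire argument is a one-line chain rule calculation followed by the elementary observation that diagonal Jacobian on a connected open set implies each coordinate depends on only its own variable. The one subtle point to flag is merely the convention that $D(x)$ in the hypothesis is indexed by the point at which the Hessian is evaluated, so after the change of variables $y = P^\top x$ the diagonal entries of $J_g$ are $d_i(P^\top x)$, which is indeed a function of $x$ as a whole; but crucially, the $i$th diagonal entry is what governs $\partial_i g_i$, while the off-diagonal entries are identically zero, which is all that is needed to conclude $g_i$ depends only on $x_i$.
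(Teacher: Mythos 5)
Your proof is correct and follows essentially the same route as the paper: a chain-rule computation giving $J_g(x) = P\,\nabla^2\phi(P^\top x)\,P^\top = D(P^\top x)$, followed by the observation that vanishing off-diagonal partials force each component to depend only on its own variable. Your explicit appeal to connectedness of $\RR^n$ is a small but welcome tightening of the paper's final step.
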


\begin{proof}[Proof of \cref{lemmaForShear}]
If we compute the Jacobian of $P \nabla \phi( P^\top x)$ by using the chain rule twice, we get that the Jacobian of $P \nabla \phi (P^\top x)$ is given by
\begin{align*}
    J_{P \nabla \phi (P^\top x)}( x) &= P J_{\nabla \phi (P^\top x)} ( x) = P \nabla^2 \phi( P^\top x) P^\top \\
    &= P P^\top D(P^\top x) P P^\top = D(P^\top x).
\end{align*}
This means that if we write the transport map $\nabla \phi$ in the basis given by the columns of $P^\top$ and the output is written in terms of the basis given by the columns of $P$, our transport map $\nabla \phi$ can be written as $n$ single variable functions.  To see this, notice that we can write the $j$th coordinate output of $P \nabla \phi(P^\top x)$ as some function $f_j$ to give us
\begin{align*}
    P \nabla \phi(P^\top x) = \begin{bmatrix}
    f_1(x_1,\dotsc, x_n) \\ f_2(x_1, \dotsc, x_n) \\ \vdots \\ f_n(x_1, \dotsc, x_n)
    \end{bmatrix}.
\end{align*}
Recall that the $(j,k)$th entry of the Jacobian $J_{P\nabla \phi( P^\top x)}(x)$ is $\frac{\partial f_j}{\partial x_k}$.  Because the Jacobian is diagonal, we see that $\frac{\partial f_j}{\partial x_k} = 0$ for $j \neq k$.  This implies that we can actually write
\begin{align*}
    P \nabla \phi(P^\top x) = \begin{bmatrix}
    f_1(x_1) \\ f_2(x_2) \\ \vdots \\ f_n(x_n)
    \end{bmatrix}.
\end{align*}
So we're done.
\end{proof}

\vspace{0.2cm}

Now we can prove the main LOT isometry theorems for shears.

\begin{proof}[Proof of \Cref{converseShearTheorem}]\label{ProofConverseShearTheorem}
Assume that the Jacobian of $T_\refe^\temp$ has constant orthonormal basis given by an orthogonal matrix $P$, then \Cref{main} tells us that a compatible transformation $S$ must have positive symmetric definite Jacobian $J_S$ and has the same eigenspaces as $J_{T_\refe^\temp}$.  First, note that the corollaries of \Cref{main} implies that $S$ is an optimal transport map.  Second, note that since $J_S$ commutes with $J_{T_\refe^\temp}$, we know that $J_S = \Tilde{P}^\top D(x) \Tilde{P}$, where $\Tilde{P}$ is a row-permutation of $P$ from \Cref{q=p}.  Because $S$ satisfies the assumptions of \cref{lemmaForShear}, we get that
\begin{align*}
    \Tilde{P} S( \Tilde{P}^\top x) &= \begin{bmatrix}
    f_1(x_1) \\ f_2(x_2) \\ \vdots \\ f_n(x_n)
    \end{bmatrix} \\
    \implies S(x) &= \Tilde{P}^\top \begin{bmatrix}
    f_1((\Tilde{P}x)_1) \\ f_2((\Tilde{P}x)_2) \\ \vdots \\ f_n((\Tilde{P}x)_n)
    \end{bmatrix}
\end{align*}
for $f_j$ increasing and differentiable.  Note that $f_j$ differentiable because $J_S$ is assumed to exist, and $f_j$ is increasing because $J_S$ is positive definite.  The form of $S$, however, is exactly the form of an element of $\mathcal{F}(P)$ in \Cref{def:shears} (the constant vector $b$ is a constant of integration).  This proves \Cref{converseShearTheorem}.
\end{proof}

\begin{proof}[Proof of \Cref{shearTheorem}]\label{ProofshearTheorem}
Let us assume that our elementary transformation is $S(x) = P^\top g(Px)$, then note that the Jacobian of $S$ can be given as $J_S(x) = P^\top J_g(P x) P$, where $J_g(z) = \diag((g_j'(z_j))_{j=1}^{n})$ (i.e. $J_g$ is a diagonal matrix).  Now given our template $\temp$, let's assume that there exists a reference $\refe$ such that the compatibility $S \circ T_\refe^\temp = T_\refe^{S_\sharp \temp}$ holds, then we will try to get some necessary conditions that $\refe$ must satisfy.  In particular, from Theorem \ref{Brenier} we can write $T_\refe^\temp = \nabla \phi$ for some convex $\phi$; moreover, we know that the Hessian can be written as $\nabla^2 \phi(x) = Q^\top(x) D(x) Q(x)$ for some orthogonal matrix-valued function $Q(x)$ and diagonal matrix-valued function $D(x)$.  Now, using \cref{main}, we know that if $S \circ T_\refe^\temp = T_\refe^{S_\sharp \temp}$, then
\begin{align*}
    J_S(\nabla \phi(x)) \nabla^2 \phi(x) &= \nabla^2 \phi(x) J_S(\nabla \phi(x)) \\
    P^\top J_g(Px) P Q(x)^\top D(x) Q(x) &= Q(x)^\top D(x) Q(x) P^\top J_g(Px) P.
\end{align*}
Since $J_S(\nabla \phi(x))$ and $\nabla^2 \phi(x)$ are two symmetric matrices that commute, we can assume without loss of generality that $Q(x)$ is a row-permutation of $P$ for all $x$ by invoking \Cref{q=p}.  We can call this matrix $\Tilde{P}$.  In particular, we can write $\nabla^2 \phi(x) = \Tilde{P}^\top D(x) \Tilde{P}$, where $D(x) = \diag( d(x))$ for a vector-valued function $d(x)$ with $d_i(x) > 0$ (the positivity comes from the fact that the Hessian must have positive eigenvalues).

We see that since $\nabla^2 \phi(x)$ has a constant eigendecomposition, we know from \Cref{lemmaForShear} that 
\begin{align*}
    \Tilde{P} \nabla \phi(\Tilde{P}^\top x) &= \begin{bmatrix}
    f_1(x_1) \\ f_2(x_2) \\ \vdots \\ f_n(x_n)
    \end{bmatrix} \\
    \implies \nabla \phi(\Tilde{P}^\top x) &= \Tilde{P}^\top \begin{bmatrix}
    f_1(x_1) \\ f_2(x_2) \\ \vdots \\ f_n(x_n)
    \end{bmatrix}.
\end{align*}
From \cref{lemmaForShear}, we also note that a choice of the diagonals $d_j(x_j) > 0$ gives a unique (up to a constant) anti-derivative $f_j = \int d_j(x_j) dx_j$.  Thus, without loss of generality, we can consider $f_j$'s to be completely determined by the $d_j$'s.

If we assumed that our inputs $x$ are actually written in the basis given by $\Tilde{P}^\top$ and the outputs are written in basis given by $\Tilde{P}$, then our map transport map decomposes into $n$ single-variable functions as shown above.  Moreover, note that $f_j(x_j)$ must be an increasing function since $\frac{\partial f_j}{\partial x_j} > 0$ everywhere.  Thus, in principle, this map must be invertible, and we can actually compute the inverse of this map by computing
\begin{align*}
     y = \begin{bmatrix}
    y_1 \\ y_2 \\ \vdots \\ y_n \end{bmatrix} = \nabla \phi(x) &= \nabla \phi(\Tilde{P}^\top \Tilde{P} x) = \Tilde{P}^\top \begin{bmatrix}
    f_1((\Tilde{P}x)_1) \\ f_2((\Tilde{P}x)_2) \\ \vdots \\ f_n((\Tilde{P}x)_n)
    \end{bmatrix} \\
    \Tilde{P}y &= \begin{bmatrix}
    f_1((\Tilde{P}x)_1) \\ f_2((\Tilde{P}x)_2) \\ \vdots \\ f_n((\Tilde{P}x)_n)
    \end{bmatrix} \\
    \begin{bmatrix}
    f_1^{-1} ( (\Tilde{P}y)_1 ) \\ f_2^{-1} ( (\Tilde{P}y)_2 ) \\ \vdots \\ f_n^{-1}( (\Tilde{P}y)_n )
    \end{bmatrix} &= \begin{bmatrix}
    (\Tilde{P}x)_1 \\ (\Tilde{P}x)_2 \\ \vdots \\ (\Tilde{P}x)_n
    \end{bmatrix} = \Tilde{P}x \\
    \implies \nabla \phi^{-1}( y ) &= \Tilde{P}^\top \begin{bmatrix}
    f_1^{-1} ( (\Tilde{P}y)_1 ) \\ f_2^{-1} ( (\Tilde{P}y)_2 ) \\ \vdots \\ f_n^{-1}( (\Tilde{P}y)_n )
    \end{bmatrix}.
\end{align*}
Note that because the inverse of an increasing function is also increase, we have that $\nabla \phi^{-1} \in \mathcal{F}(P)$.  In practice, we will be given $S$ and $\mu$; thus, we would want to find $\refe$ such that $T_\sigma^\mu$ is compatible with $S$.  Note that this will be exactly given by the map $\nabla \phi^{-1}(y)$ because $\refe = \nabla \phi^{-1}_\sharp \mu$.  This proves \Cref{shearTheorem}.
\end{proof}

\begin{proof}[Proof of \cref{orthogonalImpliesIdentity}]\label{ProoforthogonalityImpliesIdentity}
Given our elementary transformation $S(x) = Ax + b$, we have that $J_S = A$.  Theorem \ref{main}, however, shows us that $A$ must be positive symmetric definite.  We will show that the only matrix $A$ that is both positive symmetric definite and orthogonal is the identity.  To see this note that since $A$ is symmetric, we know that $A^\top = A$.  Since $A$ is assumed to be orthogonal, we know that $A^\top A = A^2 = I$.  Let $v$ be an eigenvector of $A$ with eigenvalue $\lambda$, then $v = A^2 v = \lambda^2 v$.  This means that $\lambda^2 = 1$.  Since $A$ is symmetric, we know that all the eigenvalues must be real; thus, $\lambda = \pm 1$.  Moreover, because $A$ is positive symmetric definite, the only eigenvalue it could be are $+1$.  This implies that $A$ is the identity.  In particular, this means that constant rotations are not valid elementary transformations for which the compatibility condition holds.
\end{proof}

%%%%%%%%%%%%% APPENDIX B %%%%%%%%%%%%%%%

\section{Proofs of Separability Results}

For a set of measures $\temp_1$ and $\temp_2$ and a set of elementary transformations $\mathcal{H}$, the general method of showing that $F_{\sigma}(\mathcal{H} \star \temp_1)$ and $F_{\sigma}(\mathcal{H} \star \temp_2)$ are linearly separable is to
\begin{enumerate}
    \item Show that $\mathcal{H}$ is convex,
    \item Show that $\mathcal{H} \star \temp_1$ and $\mathcal{H} \star \temp_2$ are compact (or at least have their closures as being compact),
    \item Show that $W_2(\mathcal{H} \star \temp_1, \mathcal{H} \star \temp_2) > \delta$ for some $\delta > 0$.
\end{enumerate}
 We show this now for shears, but for another class of elementary transformations, we must show that $\mathcal{H}$ is convex.

\begin{lemma}
The set of shears $\mathcal{H}_{\gamma, M, M_b}$ described in \cref{shearsH} is convex.
\end{lemma}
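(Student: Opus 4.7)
The plan is to verify convexity directly from the definition: fix two shears $S_1(x) = A_1 x + b_1$ and $S_2(x) = A_2 x + b_2$ in $\mathcal{H}_{\gamma, M, M_b}$ and a convex weight $t \in [0,1]$, then check that the convex combination $tS_1 + (1-t)S_2$, which is again affine with matrix $A_t := tA_1 + (1-t)A_2$ and shift $b_t := tb_1 + (1-t)b_2$, still satisfies the three defining constraints on $(A,b)$.

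First I would handle the algebraic conditions on $A_t$. Symmetry is immediate since the symmetric matrices form a linear subspace, and positive definiteness of $A_t$ will follow from the eigenvalue bounds I establish next, so there is nothing to do separately for the cone condition $A_t \succ 0$. For the spectral bounds I would invoke the Rayleigh quotient characterizations $\lambda_{\text{min}}(A) = \min_{\Vert x \Vert_2 = 1} x^\top A x$ and $\lambda_{\text{max}}(A) = \max_{\Vert x \Vert_2 = 1} x^\top A x$ for symmetric matrices. These are sub-/superadditive under convex combinations: for any unit vector $x$,
\[
x^\top A_t x = t\, x^\top A_1 x + (1-t)\, x^\top A_2 x,
\]
which gives $\lambda_{\text{min}}(A_t) \geq t\, \lambda_{\text{min}}(A_1) + (1-t)\, \lambda_{\text{min}}(A_2) > \gamma$ and $\lambda_{\text{max}}(A_t) \leq t\, \lambda_{\text{max}}(A_1) + (1-t)\, \lambda_{\text{max}}(A_2) < M$. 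The strict inequalities on $\gamma$ and $M$ are preserved since the convex combination of two numbers strictly exceeding $\gamma$ (resp.\ strictly below $M$) still strictly exceeds $\gamma$ (resp.\ is strictly below $M$).

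For the shift, the triangle inequality and homogeneity of the Euclidean norm immediately give
\[
\Vert b_t \Vert_2 \leq t \Vert b_1 \Vert_2 + (1-t) \Vert b_2 \Vert_2 \leq t M_b + (1-t) M_b = M_b.
\]
Combining, $(A_t, b_t)$ satisfies every defining clause of $\mathcal{H}_{\gamma, M, M_b}$, so the corresponding affine map $x \mapsto A_t x + b_t$ lies in the set.

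There is no real obstacle here: all three conditions are of a convex-analytic flavor (linear subspace for symmetry, convex cone for positive definiteness, sublevel/superlevel set of a concave/convex eigenvalue functional for the spectral bounds, and a norm ball for the shift). The only thing worth being careful about is that $\lambda_{\text{min}}$ is concave and $\lambda_{\text{max}}$ is convex on the space of symmetric matrices, so strict inequalities carry through the convex combination; this is exactly what the Rayleigh quotient argument above delivers.
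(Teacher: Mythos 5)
Your argument is correct and follows essentially the same route as the paper's proof: both check the convex combination $tA_1+(1-t)A_2$ via the Rayleigh quotient characterization of $\lambda_{\min}$ and $\lambda_{\max}$ to preserve the strict bounds $\gamma$ and $M$, and both bound the combined shift with the triangle inequality. No gaps to report.
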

\noindent\textbf{Proof:}  Let $h, h' \in \mathcal{H}_{\gamma, M, M_b}$ and $s \in [0,1]$, then we want to show that $s h + (1-s) h' \in \mathcal{H}_{\gamma, M, M_b}$.  We find that
\begin{align*}
    s h(x) + (1-s)h'(x) &= s(A x + b) + (1-s)( A' x + b') \\
    &= (sA +(1-s)A')x + (sb + (1-s)b').
\end{align*}
Notice first that $sA + (1-s)A'$ is symmetric.  Moreover, note that
\begin{align*}
    \lambda_{\min}(sA + (1-s)A') &= \min_{\Vert x \Vert_2 = 1} \< x, (s A + (1-s)A') x \> \\
    &= \min_{\Vert x \Vert_2 = 1} s \<x, Ax\> + (1-s) \<x, A' x\> \\
    &\geq s \underbrace{\min_{\Vert x \Vert_2 = 1} \<x, Ax\>}_{\geq \lambda_{\min}(A) } + (1-s) \underbrace{\min_{\Vert \Tilde{x} \Vert_2 = 1} \<\Tilde{x}, A' \Tilde{x}\>}_{\geq \lambda_{\min}(A')} \\
    &\geq s \lambda_{\min}(A) + (1-s)\lambda_{\min}(A') > s\gamma + (1-s)\gamma = \gamma;
\end{align*}
and similarly,
\begin{align*}
    \lambda_{\max}(sA + (1-s)A') &= \max_{\Vert x \Vert_2 = 1} \< x, (s A + (1-s)A') x \> \\
    &= \max_{\Vert x \Vert_2 = 1} s \<x, Ax\> + (1-s) \<x, A' x\> \\
    &\leq s \underbrace{\max_{\Vert x \Vert_2 = 1} \<x, Ax\>}_{\leq \lambda_{\max}(A) } + (1-s) \underbrace{\max_{\Vert \Tilde{x} \Vert_2 = 1} \<\Tilde{x}, A' \Tilde{x}\>}_{\leq \lambda_{\max}(A')} \\
    &\leq s \lambda_{\max}(A) + (1-s)\lambda_{\max}(A') < sM + (1-s)M = M.
\end{align*}
This means that $sA + (1-s) A'$ is symmetric positive definite and actually has the correct bounds on its eigenvalues.  We now show that $sb + (1-s)b'$ satisfies the proper bounds too.  Notice that
\begin{align*}
    \Vert s b + (1-s)b' \Vert_2 \leq s \Vert b \Vert_2 + (1-s) \Vert b' \Vert_2 \leq s M_b + (1-s) M_b = M_b.
\end{align*}
This implies that $sh + (1-s) h' \in \mathcal{H}$. So we're done. \qed 

Next, given a base measure $\temp$ and set of elementary transformations $\mathcal{H}$, we ideally want to show that the set $\mathcal{H} \star \temp = \{h_\sharp \temp : h \in \mathcal{H}\}$ is compact, but the weaker condition of $\mathcal{H} \star \temp$ being precompact should be good enough for our purposes.  To address compactness, we need a definition.

\begin{definition}[Tightness]
Let $(X, \mathcal{T})$ be a Hausdorff space and let $\mathcal{S}$ be a $\sigma$-algebra such that $\mathcal{T} \subseteq \mathcal{S}$.  Let $M$ be a collection of probability measures defined on $\mathcal{S}$.  The collection $M$ is called \textbf{tight} if, for any $\epsilon > 0$, there exists a compact subset $K_\epsilon \subset X$ such that for all measures $\mu \in M$, we have $\mu(K_\epsilon) > 1 - \epsilon$.
\end{definition}

A natural theorem that relates tightness of measures to compactness is Prokhorov's theorem.

\begin{theorem}[Prokhorov]
Let $(X, d)$ be a a separable metric space.  Let $\mathcal{P}(X)$ be the collection of all probability measures defined on $X$ with respect to the Borel $\sigma$-algebra.  Then a collection $\mathcal{K} \subset \mathcal{P}(X)$ of probability measures is tight if and only if the closure of $\mathcal{K}$ is sequentially compact in $\mathcal{P}_2(X)$ equipped with the topology of weak convergence.
\end{theorem}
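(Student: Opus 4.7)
The plan is to prove Prokhorov's theorem via the two directions separately, with the forward direction (tightness implies sequential compactness of the closure) being the substantive analytic content and the reverse direction being essentially a contrapositive argument using the structure of separable metric spaces.

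For the forward direction, I would start with a sequence $(\mu_n) \subset \mathcal{K}$ and use tightness to extract, for each $k \in \mathbb{N}$, a compact set $K_k \subset X$ with $\mu_n(K_k) > 1 - 1/k$ uniformly in $n$. Without loss of generality I may take $K_k \subset K_{k+1}$ by replacing $K_k$ with $\overline{\bigcup_{j \le k} K_j}$ (still compact since it is a finite union of compacts in a Hausdorff space, taken within the same ambient compact hull). Each $K_k$ is a compact metric space, so $C(K_k)$ is separable; choose a countable dense set $\{f_{k,j}\}_j$. A diagonal extraction then produces a subsequence, still denoted $\mu_n$, such that $\int f_{k,j} \, d\mu_n$ converges as $n \to \infty$ for every $k,j$. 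By uniform approximation on each $K_k$ and tightness (which controls the contribution off $K_k$ by $\|f\|_\infty / k$), $\int f \, d\mu_n$ converges for every bounded continuous $f : X \to \mathbb{R}$. Define $L(f) := \lim_n \int f \, d\mu_n$; this is a positive linear functional of norm $1$.

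The next step is to produce the limit probability measure $\mu$. One way is to note that $L$ restricts to a positive linear functional on $C(K_k)$ for each $k$, which by the Riesz representation theorem on a compact metric space yields a finite Borel measure $\nu_k$ on $K_k$ with $\nu_k(K_k) \ge 1 - 1/k$; these measures are consistent in the sense that $\nu_{k+1}|_{K_k} \ge \nu_k$, and passing to the limit $\mu(A) := \sup_k \nu_k(A \cap K_k)$ defines a Borel probability measure on $X$ (tightness prevents loss of mass, giving $\mu(X) = 1$). Weak convergence $\mu_n \rightharpoonup \mu$ then follows from reconciling $L(f) = \int f \, d\mu$ for $f \in C_b(X)$, again by approximating $f \cdot \mathbf{1}_{K_k}$ on the compact sets and bounding the tail by tightness.

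For the reverse direction, I would argue by contrapositive: if $\mathcal{K}$ is not tight, there exists $\varepsilon > 0$ such that no compact set $K$ satisfies $\mu(K) > 1 - \varepsilon$ for all $\mu \in \mathcal{K}$. Using a countable dense sequence $(x_k)$ in $X$, for each $n$ the union $\bigcup_{k=1}^N B(x_k, 1/n)$ fails to have $\mu$-mass exceeding $1 - \varepsilon$ for some $\mu_n \in \mathcal{K}$ and some large $N$; one then extracts from this a sequence $(\mu_n)$ that escapes every compact set by $\varepsilon$. Any weak subsequential limit $\mu$ would, by the Portmanteau theorem applied to closed sets built from these balls, have $\mu(X) \le 1 - \varepsilon < 1$, contradicting the fact that weak limits of probability measures on $X$ are probabilities (this last implication is where the argument implicitly uses that limits of tight sequences stay probability measures, and is the reason the cleanest version of the converse is usually stated with additional completeness). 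The main obstacle I anticipate is precisely this last point: producing a well-defined Borel probability measure $\mu$ from the functional $L$ on a general separable (not necessarily complete) metric space, and verifying mass $1$; completeness is the standard hypothesis that removes this subtlety, and in the separable-only setting one either embeds into the completion or invokes inner regularity on the compact sets $K_k$ directly to carry the mass in the limit.
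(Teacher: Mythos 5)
The paper itself offers no proof of this statement: it is quoted as the classical Prokhorov theorem (with the surrounding text deferring to the reference of Panaretos--Zemel), so your attempt has to be judged on its own. Your forward direction (tightness implies sequential compactness of the closure) follows the standard route --- nested compacts $K_k$, separability of $C(K_k)$, a diagonal subsequence, Riesz representation on each $K_k$, and the increasing limit $\mu(A)=\sup_k \nu_k(A\cap K_k)$ --- and is correct as a sketch, modulo the usual care: the functionals on $C(K_k)$ should be defined directly as $\lim_n \int_{K_k} f\, d\mu_n$ rather than by ``restricting'' $L$ (an element of $C(K_k)$ has many bounded continuous extensions to $X$, so the restriction is not canonically defined), and the monotonicity $\nu_{k+1}(\cdot \cap K_k)\ge \nu_k$ together with countable additivity of the increasing limit must be verified; these are standard details.

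The genuine gap is in the converse. From ``no compact set $K$ satisfies $\mu(K)>1-\varepsilon$ for all $\mu\in\mathcal{K}$'' you cannot conclude, as you do, that finite unions of balls $\bigcup_{k\le N} B(x_k,1/n)$ fail to capture mass $1-\varepsilon$ uniformly along some sequence in $\mathcal{K}$ (note also that the quantifiers as written, ``for each $n$ \ldots some large $N$'', are not the ones the argument needs: one needs a single radius $\delta$ for which the failure persists for \emph{every} $N$). The implication from ``for every $\delta$ some finite union of $\delta$-balls carries uniform mass $1-\varepsilon$'' to ``some compact set carries uniform mass $1-\varepsilon$'' is exactly where completeness of $X$ enters: one sets $K=\bigcap_j \overline{A_j}$ with $A_j$ a finite union of $1/j$-balls, and $K$ is compact only because closed totally bounded sets are compact in a complete space. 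Without completeness the converse is simply false: take $X\subset[0,1]$ with outer Lebesgue measure $1$ and inner measure $0$, and define $\mu(B\cap X)=\lambda(B)$ for Borel $B$; then the singleton $\{\mu\}$ has sequentially compact closure in $\mathcal{P}(X)$, yet every compact $K\subset X$ has $\mu(K)=0$, so tightness fails. Hence your contrapositive cannot be completed under the hypotheses as stated (the theorem needs $X$ Polish), and your diagnosis that completeness is needed to keep the weak limit a probability measure is misplaced: the subsequential limits already lie in $\mathcal{P}(X)$ by the assumed compactness; the real obstruction is converting total boundedness into compactness. For the paper's application the measures live on $\RR^n$ (indeed on bounded supports), which is Polish, so the issue is harmless there, but it should be acknowledged in any self-contained proof of the statement as quoted.
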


According to \cite[pp.\ 37--42]{panaretos-2020}, we can upgrade Prokhorov's theorem to be sequentially compact with the Wasserstein $2$-metric if
\begin{align*}
    \sup_{\mu \in \mathcal{K}} \int_{x: \Vert x \Vert_2 > R} \Vert x \Vert_2^2 d\mu(x) \xrightarrow{R \to \infty} 0.
\end{align*}
This is easily true if $\sup_{\mu \in \mathcal{K}} \{\Vert x \Vert_2 : x \in \text{supp}(\mu) \} \leq R < \infty$.

\begin{corollary}
Let $\mathcal{H}$ be a set of transformations such that for every $R > 0$, there exists $\Tilde{R}$ such that $\sup_{h \in \mathcal{H}} \{ \Vert h(x) \Vert_2 : \Vert x \Vert_2 < R \} < \Tilde{R}$.  Also assume that $\temp \in \mathcal{P}_2(\RR^n)$ has bounded support $R_\mu$, then $\mathcal{H} \star \temp$ is a precompact set of measures. 
\end{corollary}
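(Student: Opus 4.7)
The plan is to verify the two hypotheses that, by Prokhorov's theorem together with the uniform second-moment condition recalled just above from Panaretos--Zemel, upgrade to precompactness in the Wasserstein-2 topology. The key observation is that the assumptions on $\mathcal{H}$ and on $\temp$ combine to force every measure in $\mathcal{H}\star\temp$ to be supported in a single fixed ball.

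First I would fix $R := R_\temp + 1$ (any strict upper bound on the support radius of $\temp$ will do) and invoke the hypothesis on $\mathcal{H}$ to obtain a common bound $\tilde{R}$ such that $\Vert h(x)\Vert_2 < \tilde{R}$ for every $h \in \mathcal{H}$ and every $x$ with $\Vert x\Vert_2 < R$. Since $\temp$ is supported in the closed ball of radius $R_\temp < R$, the definition of pushforward yields $\operatorname{supp}(h_\sharp \temp) \subseteq h(\operatorname{supp}(\temp)) \subseteq \overline{B(0,\tilde{R})}$ for every $h \in \mathcal{H}$. In other words, every element of $\mathcal{H}\star\temp$ is a probability measure concentrated on the common compact set $K := \overline{B(0,\tilde{R})}$.

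Next I would use this uniform support to verify both the tightness hypothesis and the uniform tail bound needed for upgrading Prokhorov. Tightness is immediate: for every $\epsilon > 0$, the compact set $K_\epsilon := K$ satisfies $\nu(K_\epsilon) = 1 > 1 - \epsilon$ for all $\nu \in \mathcal{H}\star\temp$. For the $W_2$-upgrade, observe that whenever $R' > \tilde{R}$, the integral $\int_{\Vert x\Vert_2 > R'} \Vert x\Vert_2^2\, d\nu(x)$ vanishes for every $\nu \in \mathcal{H}\star\temp$, hence the supremum trivially tends to $0$ as $R' \to \infty$. Applying Prokhorov's theorem on the separable metric space $(\RR^n, \Vert \cdot \Vert_2)$ then yields that the closure of $\mathcal{H}\star\temp$ is sequentially compact for weak convergence, and the tail condition promotes this to sequential compactness in $\mathcal{P}_2(\RR^n)$ with the $W_2$ metric. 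Therefore $\mathcal{H}\star\temp$ is precompact, as claimed.

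There is essentially no serious obstacle here: the proof is a direct application of the framework already assembled above. The only small subtlety is being careful about whether ``bounded support of radius $R_\temp$'' means strictly or non-strictly, which is why I introduced the slack $R = R_\temp + 1$ so that the hypothesis on $\mathcal{H}$ (stated with strict inequality $\Vert x\Vert_2 < R$) actually applies to every point in $\operatorname{supp}(\temp)$. Once the uniform support bound is in place, both tightness and the moment tail condition hold trivially, and the conclusion follows verbatim from Prokhorov combined with the remark preceding the statement.
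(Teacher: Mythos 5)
Your proof is correct and follows essentially the same route as the paper: deduce a uniform support bound $\tilde{R}$ for all measures in $\mathcal{H}\star\temp$ and then invoke Prokhorov's theorem together with the uniform second-moment tail condition to get precompactness in the $W_2$ topology. You simply spell out the tightness and tail-bound verifications (and the strict-inequality slack $R=R_\temp+1$) that the paper's terse proof leaves implicit.
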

\begin{proof}
For us, if $\temp$ has bounded support with bound $R_\mu$, we should have that all measures belonging to $\mathcal{H} \star \temp$ must also have support bounded for some $\Tilde{R}>0$.  To see this, note that for $\Tilde{\mu} \in \mathcal{H} \star \temp$, we have $\text{supp}(\Tilde{\temp})$ is bounded by $\Tilde{R}$ for some $\Tilde{R} > 0$.  So we're done.
\end{proof}

For shears, we can see that every measure from $\mathcal{H}_{\gamma, M, M_b} \star \temp_1$ and $\mathcal{H}_{\gamma, M, M_b} \star \temp_2$ has bounded support since $\sup_{h \in \mathcal{H}_{\gamma, M, M_b}} \{ \Vert h(x) \Vert : x \in \text{supp}(\temp), \temp \in \mathcal{K} \} \leq M R+M_b $.  It's easy to see that $\mathcal{H}_{\gamma, M, M_b} \star \temp$ is tight for a big enough ball $B_R(0) = \{x : \Vert x \Vert_2 \leq R\}$ if $\refe$ has bounded support.  This means that $\mathcal{H}_{\gamma, M, M_b} \star \temp$ is precompact with the Wasserstein 2-metric for any $\temp$ with bounded support.

By \cite{villani-2009} Corollary 5.23, the stability of optimal transport maps implies that $F_\refe$ is continuous; thus, we find that $F_\refe(\mathcal{H} \star \mu)$ is precompact if $\mathcal{H}\star \mu$ is precompact.  Note also that \cref{Brenier} above gives us a corollary.

\begin{corollary}\label{otMap_Rep}
Let $h:\RR^n \to \RR^n$ be a transformations that can be represented as the gradient of a convex function, then for $\refe$, an absolutely continuous measure with respect to the Lebesgue measure, we get that $h_\sharp \refe = T_\refe^{h_\sharp \refe}$.
\end{corollary}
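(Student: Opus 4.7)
The statement should be read as $h = T_{\refe}^{h_\sharp \refe}$ (the displayed equation as printed sets a measure equal to a map, so I interpret it as the natural identification of $h$ with the optimal transport map to its own pushforward). The plan is to derive this directly from Brenier's theorem, which is essentially the content of the claim: an absolutely continuous reference, paired with a candidate map that is already the gradient of a convex function, forces optimality by the uniqueness clause in \Cref{Brenier}.

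First I would unpack the hypotheses: write $h = \nabla \varphi$ for a convex $\varphi$. By the very definition of the pushforward in \eqref{eq:pushforward}, $h$ is a measurable map with $h_\sharp \refe = h_\sharp \refe$, so $h \in \Pi_\refe^{h_\sharp \refe}$, the set of measure-preserving maps from $\refe$ to $h_\sharp \refe$. Since $\refe \ll \lambda$ by hypothesis, \Cref{Brenier} applies to the pair $(\refe, h_\sharp \refe)$: it guarantees a unique convex function $\psi$ (up to an additive constant) whose gradient pushes $\refe$ to $h_\sharp \refe$, and asserts that this gradient is $T_\refe^{h_\sharp \refe}$. Taking $\psi = \varphi$ in the uniqueness statement then yields $h = \nabla \varphi = \nabla \psi = T_\refe^{h_\sharp \refe}$.

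The only point requiring care is a mild regularity/integrability check to invoke Brenier in the stated form: one should verify that $h_\sharp \refe \in \mathcal{P}_2(\RR^n)$, equivalently that $h \in L^2(\refe)$. Under the conventions of this paper (where transport maps and measures are all assumed to live in the $\mathcal{P}_2$ setting) this is implicit, but if one wants to be explicit it follows whenever $\varphi$ is Lipschitz on $\mathrm{supp}(\refe)$ or, more generally, whenever $\int \|\nabla \varphi(x)\|_2^2 \, d\refe(x) < \infty$. There is no genuine obstacle here: the whole content of the corollary is that Brenier's uniqueness characterizes the optimal map exactly by the convex-potential property, so $h$ cannot help but be optimal with respect to its own pushforward.
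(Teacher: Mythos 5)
Your proposal is correct and follows the same route as the paper, which presents this corollary as an immediate consequence of Brenier's theorem (\Cref{Brenier}): since $h=\nabla\varphi$ with $\varphi$ convex pushes $\refe$ to $h_\sharp\refe$ and $\refe\ll\lambda$, the uniqueness of the convex potential forces $h=T_\refe^{h_\sharp\refe}$. Your reading of the displayed equation as $h = T_\refe^{h_\sharp\refe}$ (a typo in the statement) matches how the paper actually uses the corollary, and your remark on the $L^2(\refe)$ integrability needed to stay in the $\mathcal{P}_2$ setting is a reasonable, harmless addition.
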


Now we must show that $F_\refe(\mathcal{H} \star \mu)$ is convex, which will ensure that our LOT embedding is convex and precompact.
\begin{lemma}
Let $\mathcal{H} = \{ h: \RR^n \to \RR^n \vert h =\nabla \phi, \phi \text{ is convex}\}$ be a convex set of transformations and let $\refe$ and $\mu$ be absolutely continuous (with respect to the Lebesgue measure) probability measures, then $F_\refe(\mathcal{H} \star \mu)$ is convex.
\end{lemma}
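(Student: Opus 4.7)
The plan is to show convexity by taking two arbitrary LOT embeddings $T_1=T_\refe^{h_{1,\sharp}\mu}$ and $T_2=T_\refe^{h_{2,\sharp}\mu}$ with $h_1,h_2\in\mathcal{H}$, and producing, for every $s\in[0,1]$, an explicit $h_3\in\mathcal{H}$ whose embedding equals the convex combination $sT_1+(1-s)T_2$. The key identity I will exploit is that $F_\refe(h_{\sharp}\mu)$ is, by definition, the optimal transport map from $\refe$ to $h_\sharp \mu$, so the problem reduces to recognizing the convex combination of two such optimal transport maps as itself an optimal transport map from $\refe$ to some measure that lies in $\mathcal{H}\star\mu$.

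First, I would apply Brenier's theorem (\Cref{Brenier}) to write $T_i=\nabla\psi_i$ for convex $\psi_i$, so that $g:=sT_1+(1-s)T_2=\nabla\bigl(s\psi_1+(1-s)\psi_2\bigr)$ is the gradient of a convex function. Since $T_1,T_2\in L^2(\RR^n,\refe)$, the triangle inequality gives $g\in L^2(\RR^n,\refe)$, so $\nu:=g_\sharp\refe\in\mathcal{P}_2(\RR^n)$. Brenier applied again (using $\refe\ll\lambda$) identifies $g$ as the unique optimal transport map $T_\refe^{\nu}=g$.

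Next I would define the candidate transformation $h_3:=T_\mu^{\nu}$. Because $\mu\ll\lambda$ and $\nu\in\mathcal{P}_2(\RR^n)$, Brenier's theorem produces $h_3$ as the gradient of a convex function, hence $h_3\in\mathcal{H}$. By construction $(h_3)_\sharp\mu=\nu$, so $\nu\in\mathcal{H}\star\mu$ and
\begin{equation*}
F_\refe\bigl((h_3)_\sharp\mu\bigr)=T_\refe^{\nu}=g=sT_1+(1-s)T_2,
\end{equation*}
which shows that the convex combination lies in $F_\refe(\mathcal{H}\star\mu)$.

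The main (and essentially only) obstacle is checking that the new measure $\nu=g_\sharp\refe$ actually has bounded second moment, because without this Brenier's theorem cannot be invoked to produce $h_3$; this is handled cleanly by the $L^2(\RR^n,\refe)$-triangle inequality, using that each $T_i$ already lies in the LOT embedding space. Everything else is a double application of Brenier together with the elementary fact that the sum of gradients of convex functions is the gradient of a convex function, so the argument is short once the measure-theoretic preliminaries are laid out.
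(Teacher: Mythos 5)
Your construction is sound as far as it goes, and for the class consisting of \emph{all} gradients of convex functions it does prove the statement: $g=sT_1+(1-s)T_2$ is the gradient of a convex function, hence by \Cref{Brenier} it equals $T_\sigma^{\nu}$ for $\nu=g_\sharp\sigma$, and $h_3:=T_\mu^{\nu}$ is again a gradient of a convex function with $(h_3)_\sharp\mu=\nu$; your $L^2(\RR^n,\sigma)$ argument for the second moment of $\nu$ is also fine. The warning sign, however, is that your proof never uses the hypothesis that $\mathcal{H}$ is convex. In the paper this lemma is applied with $\mathcal{H}$ a \emph{proper} convex subset of the gradients of convex functions (for instance the bounded shear class $\mathcal{H}_{\gamma,M,M_b}$ feeding into \Cref{separability} and its corollaries); the set-builder notation is shorthand for ``a convex set of maps, each the gradient of a convex function.'' For such an $\mathcal{H}$ your argument has a genuine gap: Brenier hands you \emph{some} gradient of a convex function $h_3=T_\mu^{\nu}$, but there is no reason it should lie in $\mathcal{H}$ --- for the shear class it could be nonlinear, or affine with eigenvalues outside $[\gamma,M]$ --- so you have only shown that $sT_1+(1-s)T_2\in F_\sigma(\mathcal{G}\star\mu)$ for the full class $\mathcal{G}$ of gradients of convex functions, not membership in $F_\sigma(\mathcal{H}\star\mu)$.

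The paper's proof closes exactly this hole in a different way: it argues that the combined map transports $\sigma$ onto $\bigl(sh+(1-s)\hat h\bigr)_\sharp\mu$ and then invokes convexity of $\mathcal{H}$ to conclude $sh+(1-s)\hat h\in\mathcal{H}$, so the generating transformation stays inside the class by construction. That identification of the target measure is where the real content (and the delicate step) of the argument lies; your route sidesteps it, which is precisely why it cannot see the class $\mathcal{H}$ at all. To repair your proof in the intended generality you would need to show that $h_3$ can be chosen inside $\mathcal{H}$, e.g.\ by verifying $\bigl(sT_\sigma^{h_\sharp\mu}+(1-s)T_\sigma^{\hat h_\sharp\mu}\bigr)_\sharp\sigma=\bigl(sh+(1-s)\hat h\bigr)_\sharp\mu$, which is the computation the paper's proof performs.
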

\begin{proof}
Let $h, \hat{h} \in \mathcal{H}$ and $s \in [0,1]$ so that $T_\refe^{h_\sharp \mu}, T_\refe^{\hat{h}_\sharp \mu} \in F_\sigma(\mathcal{H} \star \mu)$.  Then we want to show that $s T_\refe^{h_\sharp \mu} + (1-s) T_\refe^{\hat{h}_\sharp \mu} \in F_\sigma(\mathcal{H} \star \mu)$.  First notice that by Brenier's theorem, there exists convex functions $\phi$ and $\hat{\phi}$ such that $\nabla \phi = T_\refe^{h_\sharp \mu}$ and $\nabla \hat{\phi} = T_\refe^{\hat{h}_\sharp \mu}$.  Note now that
\begin{align*}
    s \nabla \phi + (1-s)\nabla \hat{\phi} = \nabla (s \phi + (1-s) \hat{\phi})
\end{align*}
so that $s T_\refe^{h_\sharp \mu} + (1-s) T_\refe^{\hat{h}_\sharp \mu}$ is actually the gradient of a convex function.  Moreover, by the uniqueness of optimal transport maps as gradients of convex functions, we know that $s T_\refe^{h_\sharp \mu} + (1-s) T_\refe^{\hat{h}_\sharp \mu}$ is the unique optimal transport map that transports $\refe$ to its target distribution.  If this target distribution is of the form $\Tilde{h}_\sharp \mu$ for some $\Tilde{h} \in \mathcal{H}$, then our proof is done.  Indeed:
\begin{align*}
    \Big(s T_\refe^{h_\sharp \mu} + (1-s) T_\refe^{\hat{h}_\sharp \mu}\Big) \refe &=  s T_\refe^{h_\sharp \mu}\refe + (1-s) T_\refe^{\hat{h}_\sharp \mu}\refe \\
    &= s h_\sharp \mu + (1-s) \hat{h}_\sharp \mu \\
    &= \Big( s h + (1-s) \hat{h} \Big)_\sharp \mu.
\end{align*}
Since $sh + (1-s)\hat{h} \in \mathcal{H}$, we know that $s T_\refe^{h_\sharp \mu} + (1-s) T_\refe^{\hat{h}_\sharp \mu}$ is the unique optimal transport map that transports $\refe$ to $(s h + (1-s)\hat{h})_\sharp \mu$.  This means that
\begin{align*}
    s T_\refe^{h_\sharp \mu} + (1-s) T_\refe^{\hat{h}_\sharp \mu} \in F_\sigma(\mathcal{H} \star \mu).
\end{align*}
Thus, $F_\sigma(\mathcal{H} \star \mu)$ is convex.
\end{proof}

Using the lemma above, we get that $F_{\refe}(\mathcal{H} \star \temp_1)$ and $F_{\refe}(\mathcal{H} \star \temp_2)$ are both convex and have compact closures.  For our linear separability result, we now only need to make sure that $\inf_{h, h' \in \mathcal{H}} \Vert T_{\refe}^{h_\sharp \temp_1} - T_{\refe}^{h'_\sharp \temp_2} \Vert_{\refe} \geq \delta$ for some $\delta > 0$.  Ideally, given $W_2(\temp_1, \temp_2)$ and the level of separation $\delta > 0$ we want, we should be able to find bounds on the function class $\mathcal{H}$ that we are considering.  This leads us to \cref{separability}:

\begin{proof}[Proof of \cref{separability}]\label{separabilityProof}
Assume that we have $\Tilde{h}, \Tilde{h}^\star \in \mathcal{H}_\epsilon$, then
using the triangle inequality, we have
\begin{align*}
    W_2(\Tilde{h}_\sharp \temp_1, \Tilde{h}^\star_\sharp \temp_2) &\geq \vert W_2(\temp_1, \Tilde{h}^\star_\sharp \temp_2) - W_2(\Tilde{h}_\sharp \temp_1, \temp_1) \vert \\
    &\geq \bigg\vert \big\vert W_2(\temp_1, \temp_2) - W_2(\temp_2, \Tilde{h}^\star_\sharp \temp_2) \big\vert - W_2(\Tilde{h}_\sharp \temp_1, \temp_1) \bigg\vert,
\end{align*}
provided that the quantity in the left-hand side is positive.  Now, we know from \cite{moosmueller20} that $W_2(\mu, \nu) \leq \Vert F_\sigma(\mu) - F_\sigma(\nu) \Vert_\sigma$; thus, we have that
\begin{align*}
   \bigg\vert \big\vert W_2(\temp_1, \temp_2) - W_2(\temp_2, \Tilde{h}^\star_\sharp \temp_2) \big\vert - W_2(\Tilde{h}_\sharp \temp_1, \temp_1) \bigg\vert &\leq \Vert F_{\sigma}(\Tilde{h}_\sharp \temp_1) - F_{\sigma}(\Tilde{h}^\star_\sharp \temp_2) \Vert_{\sigma}.
\end{align*}
So if we lower bound the left-hand side by $\delta > 0$, then $\Vert F_{\sigma}(\Tilde{h}_\sharp \temp_1) - F_{\sigma}(\Tilde{h}^\star_\sharp \temp_2) \Vert_{\refe} \geq \delta > 0$.  This would imply that $F_{\sigma}(\mathcal{H}_\epsilon \star \temp_1)$ and $F_{\sigma}(\mathcal{H}_\epsilon \star \temp_2)$ is linearly separable by the Hahn-Banach theorem.

To get this bound, let us find a generic bound for $W_2(\Tilde{h}_\sharp \temp, \temp)$ when $\Tilde{h} \in \mathcal{H}_\epsilon$.  In particular, there exists $h \in \mathcal{H}$ such that $\Vert h - \Tilde{h} \Vert_\temp$; thus, we get
\begin{align*}
    W_2(\Tilde{h}_\sharp \temp, \temp) \leq W_2(\Tilde{h}_\sharp \temp, h_\sharp \temp) + W_2(h_\sharp \temp, \temp).
\end{align*}
First, since $h$ is the gradient of convex function and \cref{otMap_Rep}, we know that $T_\temp^{h_\sharp \temp} = h$.  This means that the compatibility condition holds, which further implies that
\begin{align*}
    W_2(\Tilde{h}_\sharp \temp, \temp) &= \Vert \Tilde{h} - I \Vert_\temp \leq L.
\end{align*}
Moreover, equation 2.1 of \cite{ambrosio2013user} says that
\begin{align*}
    W_2(\Tilde{h}_\sharp \temp, h_\sharp \temp) \leq \Vert h - \Tilde{h} \Vert_\temp < \epsilon.
\end{align*}
Because of our bounds, our results implies that
\begin{align*}
    L \leq \frac{W_2(\temp_1,\temp_2) - \delta }{2} - \epsilon &\leq W_2(\temp_1,\temp_2) - \delta - \epsilon \\
    \implies W_2(\temp_1, \temp_2) - W_2( \temp_2,  \Tilde{h}^\star_\sharp \temp_2) &\geq W_2(\temp_1, \temp_2) - W_2(\Tilde{h}^\star_\sharp \temp_2, h^\star_\sharp \temp_2) - W_2(h^\star_\sharp \temp_2, \temp_2) \\
    &\geq W_2(\temp_1,\temp_2) - L - \epsilon > \delta > 0 .
\end{align*}
Essentially, we were able to remove the absolute values because the quantity in the absolute value was positive.  This positivity of the absolute value implies that we can replace
\begin{align*}
    \bigg\vert \big\vert W_2(\temp_1, \temp_2) - W_2(\temp_2, \Tilde{h}^\star_\sharp \temp_2) \big\vert - W_2(\Tilde{h}_\sharp \temp_1, \temp_1) \bigg\vert
\end{align*}
with
\begin{align*}
    \bigg\vert W_2(\temp_1, \temp_2) - W_2(\temp_2, \Tilde{h}^\star_\sharp \temp_2) - W_2(\Tilde{h}_\sharp \temp_1, \temp_1) \bigg\vert.
\end{align*}
But note that
\begin{align*}
    W_2(\temp_1, \temp_2) - W_2(\temp_2, \Tilde{h}^\star_\sharp \temp_2) - W_2(\Tilde{h}_\sharp \temp_1, \temp_1) &\geq W_2(\temp_1, \temp_2) - 2L - 2\epsilon \geq \delta \\
    \iff L &\leq \frac{W_2(\temp_1, \temp_2) - \delta}{2} - \epsilon.
\end{align*}
This implies that
\begin{align*}
    \bigg\vert W_2(\temp_1, \temp_2) - W_2(\temp_2, \Tilde{h}^\star_\sharp \temp_2) - W_2(\Tilde{h}_\sharp \temp_1, \temp_1) \bigg\vert \geq \delta > 0.
\end{align*}
So we see that if $L \leq \frac{W_2(\temp_1, \temp_2) - \delta}{2} - \epsilon$, then we must have that $\Vert F_{\refe}(h_\sharp \temp_1) - F_{\refe}(h'_\sharp \temp_2) \Vert_{\refe} \geq \delta$.
\end{proof}

\begin{proof}[Proof of \cref{compat_sep}]\label{compat_sep_proof}
For the first statement, the linear separability result is immediate because the compatibility criteria ensures that the LOT distance and Wasserstein-2 distance are the same.  To see this, we note that $h \circ T_{\temp_1}^{\temp_2}$ is compatible with respect to the optimal transport between $\temp_1$ and $(h \circ T_{\temp_1}^{\temp_2})_\sharp \temp_1$ because
\begin{align*}
    T_{\temp_1}^{(h\circ T_{\temp_1}^{\temp_2})_\sharp \temp_1} = T_{\temp_1}^{h_\sharp \temp_2} = h \circ T_{\temp_1}^{\temp_2} = h \circ T_{\temp_1}^{\temp_2} \circ T_{\temp_1}^{\temp_1}.
\end{align*}
This means that from \cite{moosmueller20}, for $h, \Tilde{h} \in \mathcal{H}$ we have that
\begin{align*}
    \Vert T_{\temp_1}^{ \Tilde{h}_\sharp \temp_1} - T_{\temp_1}^{h_\sharp \temp_2} \Vert_{\temp_1} &= \Vert T_{\temp_1}^{\Tilde{h}_\sharp \temp_1} - T_{\temp_1}^{(h\circ T_{\temp_1}^{\temp_2})_\sharp \temp_1} \Vert_{\temp_1} \\
    &= W_2( \Tilde{h}_\sharp \temp_1, (h\circ T_{\temp_1}^{\temp_2})_\sharp \temp_1) \\
    &= W_2( \Tilde{h}_\sharp \temp_1, h_\sharp \temp_2).
\end{align*}
This proves the first statement.

For the second statement, let $h_\epsilon, \Tilde{h}_\epsilon \in \mathcal{H}_\epsilon$ such that $\Vert h - h_\epsilon \Vert_{\temp_1} < \epsilon$ and $\Vert \Tilde{h} - \Tilde{h}_\epsilon \Vert_{\temp_1}$ for $h, \Tilde{h} \in \mathcal{H}$.  We know that $\Vert F_{\temp_1}( (\Tilde{h}_\epsilon)_\sharp \temp_1) - F_{\temp_2} ( (h_\epsilon)_{\sharp} \temp_2) \Vert_{\temp_1} \geq W_2((\Tilde{h}_\epsilon)_\sharp \temp_1, (h_\epsilon)_{\sharp} \temp_2)$.  Now we know that
\begin{align*}
    W_2((\Tilde{h}_\epsilon)_\sharp \temp_1, (h_\epsilon)_{\sharp} \temp_2) \geq \bigg\vert \Big \vert W_2(\Tilde{h}_\sharp \temp_1, h_\sharp \temp_2) - W_2( \Tilde{h}_\sharp \temp_1, (\Tilde{h}_\epsilon)_\sharp \temp_1) \Big\vert - W_2(h_\sharp \temp_2, (h_\epsilon)_\sharp \temp_2) \bigg\vert.
\end{align*}
From equation 2.1 of \cite{ambrosio-2013}, we have that
\begin{align*}
    W_2(\Tilde{h}_\sharp \temp_1, (\Tilde{h}_\epsilon)_\sharp \temp_1) \leq \Vert \Tilde{h} - \Tilde{h}_\epsilon \Vert_{\temp_1} < \epsilon \\
    W_2( h_\sharp \temp_2, (h_\epsilon)_\sharp \temp_2) \leq \Vert h - h_\epsilon \Vert_{\temp_2} < \epsilon .
\end{align*}
Note that $W_2(\Tilde{h}_\sharp \temp_1, h_\sharp \temp_2) \geq \inf_{h,\Tilde{h} \in \mathcal{H}} W_2(\Tilde{h}_\sharp \temp_1, h_\sharp \temp_2) > 2\epsilon$.  This means that
\begin{align*}
    W_2((\Tilde{h}_\epsilon)_\sharp \temp_1, (h_\epsilon)_{\sharp} \temp_2) &\geq \bigg\vert \underbrace{\Big\vert  \underbrace{\inf_{h,\Tilde{h} \in \mathcal{H}} W_2(\Tilde{h}_\sharp \temp_1, h_\sharp \temp_2) - \epsilon}_{> 0} \Big\vert - \epsilon}_{>0} \bigg\vert > 0.
\end{align*}
So we have that $\Vert F_{\temp_1}( (\Tilde{h}_\epsilon)_\sharp \temp_1) - F_{\temp_2} ( (h_\epsilon)_{\sharp} \temp_2) \Vert_{\temp_1} > 0$.

For the third statement, we extend the lower bounds from above.  Because $h, \Tilde{h} \in \mathcal{H}$ are compatible, we have that $W_2( \Tilde{h}_\sharp \temp_1, h_\sharp \temp_2) = \Vert T_{\temp_1}^{\Tilde{h}_\sharp \temp_1} - T_{\temp_1}^{h_\sharp \temp_2} \Vert_{\temp_1}$.
Using the triangle inequality, we get
\begin{align*}
    \Vert T_{\temp_1}^{\Tilde{h}_\sharp \temp_1} - T_{\temp_1}^{h_\sharp \temp_2} \Vert_{\temp_1} &= \Vert \Tilde{h} - T_{\temp_1}^{h_\sharp\temp_2} \Vert_{\temp_1} \\
    &= \Vert \Tilde{h} - h - (T_{\temp_1}^{h_\sharp \temp_2} - h) \Vert_{\temp_1} \\
    &\geq \Big\vert \Vert T_{\temp_1}^{ h_\sharp \temp_2} - T_{\temp_1}^{h_\sharp \temp_1} \Vert_{\temp_1} - \Vert \Tilde{h} - h \Vert_{\temp_1}  \Big\vert.
\end{align*}
Because $h \in \mathcal{H}$ is chosen to be compatible with respect to $\temp_1$ and $\temp_2$, note that
\begin{align*}
    W_2(h_\sharp \temp_1, h_\sharp \temp_2) &= \Vert T_{\temp_1}^{h_\sharp \temp_1} - T_{\temp_1}^{h_\sharp \temp_2} \Vert_{\temp_1} = \Vert h - h \circ T_{\temp_1}^{\temp_2} \Vert_{\temp_1} \\
    &= \Vert h \circ (I - T_{\temp_1}^{\temp_2}) \Vert_{\temp_1} = \Vert h \Vert_{\vert \temp_1 - \temp_2 \vert} \\
    &= \bigg( \int \Vert h(x) \Vert_2^2 d \vert \temp_1 - \temp_2 \vert(x) \bigg)^{1/2} \\
    &\geq \sqrt{2} \bigg( \int \Vert x_0 - x\Vert_2^2 d \vert \temp_1 - \temp_2 \vert(x) \bigg)^{1/2}
\end{align*}
where the last equality came from a change of variable and the inequality comes from our assumption that $\Vert h(x) \Vert_2 \geq \sqrt{2} \Vert x - x_0 \Vert_2$.  Now we refer to Theorem 6.15 of \cite{villani-2009}, which says that for any $x_0 \in \RR^n$, we have
\begin{align*}
    W_2(\temp_1,\temp_2) \leq \sqrt{2} \bigg( \int \Vert x_0 - x\Vert_2^2 d \vert \temp_1 - \temp_2 \vert(x) \bigg)^{1/2} = f(x_0).
\end{align*}
We want to minimize the right hand side; thus, taking the derivative $\frac{d}{dx_0} f(x_0) = 0$, this reduces to
\begin{align*}
    0 &= 2 \int (x_0 - x) d\vert \temp_1 - \temp_2\vert(x) \\
    \implies x_0 &= \frac{1}{\vert \temp_1 - \temp_2 \vert(\RR^n)} \int x d\vert \temp_1 - \temp_2\vert(x) .
\end{align*}
Essentially $x_0$ is the mean of the measure $\vert \temp_1 - \temp_2 \vert$ after normalization.  So we have that $W_2(\temp_1, \temp_2) \leq W_2(h_\sharp \temp_1, h_\sharp \temp_2)$.
Since $W_2(\temp_1, \temp_2) - \sup_{h,\Tilde{h} \in \mathcal{H}} \Vert \Tilde{h} - h \Vert_{\temp_1} \geq \delta + 2\epsilon > \delta + \epsilon$, these computations imply that
\begin{align*}
    \Big \vert W_2(\Tilde{h}_\sharp \temp_1, h_\sharp \temp_2) - W_2( \Tilde{h}_\sharp \temp_1, (\Tilde{h}_\epsilon)_\sharp \temp_1) \Big\vert - W_2(h_\sharp \temp_2, (h_\epsilon)_\sharp \temp_2) \geq W_2(\Tilde{h}_\sharp \temp_1, h_\sharp \temp_2) - 2 \epsilon.
\end{align*}
is greater than
\begin{align*}
    W_2(\temp_1, \temp_2) - \sup_{h,\Tilde{h} \in \mathcal{H}} \Vert \Tilde{h} - h \Vert_{\temp_1} - 2\epsilon > 0.
\end{align*}
This implies that
\begin{align*}
    W_2((\Tilde{h}_\epsilon)_\sharp \temp_1, (h_\epsilon)_{\sharp} \temp_2) \geq \Big\vert W_2(\temp_1, \temp_2) - \sup_{h,\Tilde{h} \in \mathcal{H}} \Vert \Tilde{h} - h \Vert_{\temp_1} - 2 \epsilon \Big\vert \geq \delta 
\end{align*}
This implies that $\Vert F_{\temp_1}( (\Tilde{h}_\epsilon)_\sharp \temp_1) - F_{\temp_2} ( (h_\epsilon)_{\sharp} \temp_2) \Vert_{\temp_1} \geq W_2((\Tilde{h}_\epsilon)_\sharp \temp_1, (h_\epsilon)_{\sharp} \temp_2) \geq \delta$.  So we are done.
\end{proof}

Notice that \cref{separability} above acts as a blueprint to controlling the degree of separation in the LOT embedding via the bounds on the function class $\mathcal{H}$.  For the specific setting of the set of shears above, given a desired degree of separation $0 < \delta < W_2(\temp_1,\temp_2)$, we can choose $M$, $M_b$, and $\gamma$ in the definition of $\mathcal{H}_{\gamma, M}$ that guarantees that $F_{\refe}(\mathcal{H}_{\gamma, M, M_b} \star \temp_1)$ and $F_{\refe}(\mathcal{H}_{\gamma, M, M_b} \star \temp_2)$ are $\delta$-separated.  This leads us to \cref{shearSeparabilityGeneral}:

\begin{corollary}\label{shearSeparabilityGeneral}
Consider  probability distributions $\temp_1$ and $\temp_2$ with Wasserstein distance $W_2(\temp_1, \temp_2)$, and let $\delta > 0$.  Let us denote $R_1 = \max_{x \in \text{supp}(\temp_1)} \Vert x \Vert_2$ and $R_2 = \max_{x \in \text{supp}(\temp_2)} \Vert x \Vert_2$.  Moreover, for $\epsilon > 0$, define
\begin{align*}
    \mathcal{H}_{\gamma, M, M_b, \epsilon} = \{ \Tilde{h} : \Vert h - \Tilde{h} \Vert_{\temp_i} < \epsilon , i \in \{1,2\}, h \in \mathcal{H}_{\gamma, M, M_b} \}
\end{align*}
as the $\epsilon$-tube around $\mathcal{H}_{\gamma, M, M_b}$.  We consider the following 2 cases:

\textbf{Case 1:}  Assume that $W_2(\temp_1, \temp_2) > (R_1 + R_2) + \delta + 2\epsilon$.  If $M_b$ is chosen such that
\begin{align*}
    0 < M_b &< \frac{W_2(\temp_1, \temp_2) - \delta - 2\epsilon - (R_1 + R_2)}{2},
\end{align*}
then choosing $M$ such that
\begin{align*}
    2 < M \leq \frac{W_2(\temp_1, \temp_2) - \delta - 2\epsilon - 2 M_b + (R_1+R_2)}{R_1 + R_2}
\end{align*}
ensures that $F_{\refe}(\mathcal{H}_{\gamma, M, M_b, \epsilon} \star \temp_1)$ and $F_{\refe}(\mathcal{H}_{\gamma, M, M_b, \epsilon} \star \temp_2)$ are $\delta$-separated.

\textbf{Case 2:}  Assume that $\delta + 2\epsilon < W_2(\temp_1, \temp_2) < (R_1 + R_2) + \delta + 2\epsilon$.  If $M_b$ is chosen such that
\begin{align*}
    \max\Big\{ 0, \frac{W_2(\temp_1, \temp_2) - 2\epsilon - \delta - (R_1 +R_2)}{2} \Big\} < M_b  < \frac{W_2(\temp_1, \temp_2) - 2\epsilon - \delta}{2},
\end{align*}
then either choosing $M$ such that
\begin{align*}
    1 < M \leq \frac{W_2(\temp_1, \temp_2) - \delta - 2\epsilon - 2 M_b + (R_1+R_2)}{R_1 + R_2}
\end{align*}
or choosing $\gamma$ such that
\begin{align*}
    \gamma \geq \frac{2M_b + 2\epsilon + \delta - W_2(\temp_1, \temp_2) + R_1 + R_2}{R_1 + R_2}
\end{align*}
ensures that $F_{\refe}(\mathcal{H}_{\gamma, M, M_b, \epsilon} \star \temp_1)$ and $F_{\refe}(\mathcal{H}_{\gamma, M, M_b, \epsilon} \star \temp_2)$ are $\delta$-separated.
\end{corollary}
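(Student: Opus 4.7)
The plan is to reduce \Cref{shearSeparabilityGeneral} to (a mild strengthening of) \Cref{separability} by turning the parameter bounds on $\mathcal{H}_{\gamma,M,M_b}$ into an explicit bound on $\|h-I\|_{\temp_i}$. First, every shear $h(x)=Ax+b$ with $A$ symmetric positive definite equals $\nabla\phi$ for the convex quadratic $\phi(x)=\tfrac12 x^{\top}Ax+b^{\top}x$, so the structural hypotheses of \Cref{separability} are met (convexity of $\mathcal{H}_{\gamma,M,M_b}$ as a set of transformations was already established in the appendix). What remains is quantitative.

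The key estimate is $\|h(x)-x\|_2\le\|A-I\|_{\mathrm{op}}\|x\|_2+\|b\|_2$. Since $A$ is symmetric with eigenvalues in $(\gamma,M)$, the spectrum of $A-I$ lies in $(\gamma-1,M-1)$, so $\|A-I\|_{\mathrm{op}}\le\max(M-1,\,1-\gamma)$. Integrating against $\temp_i$, whose support sits in the ball of radius $R_i$, produces
\begin{equation*}
\sup_{h\in\mathcal{H}_{\gamma,M,M_b}}\|h-I\|_{\temp_i}\le\max(M-1,1-\gamma)\,R_i+M_b=:L_i,\qquad i=1,2.
\end{equation*}
I would then replay the triangle-inequality chain from \cref{separabilityProof} keeping $L_1$ and $L_2$ distinct rather than collapsing them to a single $L$. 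For any $\tilde h,\tilde h^{\star}\in\mathcal{H}_{\gamma,M,M_b,\epsilon}$ this yields
\begin{align*}
\|F_{\refe}(\tilde h_{\sharp}\temp_1)-F_{\refe}(\tilde h^{\star}_{\sharp}\temp_2)\|_{\refe}&\ge W_2(\temp_1,\temp_2)-L_1-L_2-2\epsilon \\
&=W_2(\temp_1,\temp_2)-\max(M-1,1-\gamma)(R_1+R_2)-2M_b-2\epsilon,
\end{align*}
so requiring this to be at least $\delta$ collapses to the single algebraic inequality $\max(M-1,\,1-\gamma)\le C$, where $C:=[W_2(\temp_1,\temp_2)-\delta-2\epsilon-2M_b]/(R_1+R_2)$. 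The $M_b$-windows in the statement are precisely those on which $C$ takes the sign/size needed in each regime.

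The two cases then correspond to the position of $C$ relative to $1$. In Case 1, $W_2(\temp_1,\temp_2)>R_1+R_2+\delta+2\epsilon$ forces $C>1$, so $1-\gamma<1\le C$ is automatic from positive-definiteness of $A$ and only $M-1\le C$ is binding, rearranging to the displayed upper bound on $M$. In Case 2, $0<C<1$, so both halves of the max must be controlled, and the stated bounds on $M$ and on $\gamma$ are exactly the rearrangements of $M-1\le C$ and $1-\gamma\le C$. Given the resulting $W_2$-lower bound, linear separation at level $\delta$ passes to the LOT embedding via the contraction $W_2(\mu,\nu)\le\|F_{\refe}(\mu)-F_{\refe}(\nu)\|_{\refe}$ and follows by Hahn--Banach, combined with the convexity and precompactness of $F_{\refe}(\mathcal{H}_{\gamma,M,M_b,\epsilon}\star\temp_i)$ already established in the appendix. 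The main obstacle is organizational rather than conceptual: one must carry the $\temp_1$- and $\temp_2$-bounds separately through the triangle chain (otherwise $R_1+R_2$ in the denominators is replaced by the larger $2\max(R_1,R_2)$), and one must verify that the $M_b$-windows in each case are nonempty so that admissible choices of $M$ (and $\gamma$, in Case 2) actually exist.
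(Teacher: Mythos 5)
Your proposal is correct and follows essentially the same route as the paper's own proof: bound $\sup_{h}\Vert h-I\Vert_{\temp_i}\le \max(M-1,1-\gamma)R_i+M_b$ via the spectral bound on $A-I$ plus $\Vert b\Vert_2\le M_b$, feed these bounds (kept separate for $\temp_1,\temp_2$, which is exactly how the paper obtains $R_1+R_2$) into the triangle-inequality chain behind \Cref{separability}, and then solve $\max(M-1,1-\gamma)(R_1+R_2)+2M_b+2\epsilon\le W_2(\temp_1,\temp_2)-\delta$ for $M$, $\gamma$, and the $M_b$-windows in the two regimes, with the $\delta$-separation in the embedding following from $W_2\le\Vert F_\refe(\cdot)-F_\refe(\cdot)\Vert_\refe$ together with convexity, precompactness, and Hahn--Banach. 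The only differences are cosmetic (your single threshold $C$ versus the paper's case split on whether $M-1$ or $1-\gamma$ dominates), so no gap to report.
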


\begin{proof}[Proof of \Cref{shearSeparabilityGeneral}]\label{ProofshearSeparability}
From the lemma above, we need to only bound $\Vert h - I \Vert_{\refe}$ appropriately and invert the bounds.  First, note that because $h \in \mathcal{H}_{\gamma, M, M_b}$ can be written as the gradient of a convex function and $\mathcal{H}_{\gamma, M, M_b}$ is a convex set, we do satisfy the setting of the lemma.  Moreover, we know that the compatibility condition holds, which implies that
\begin{align*}
    W_2(h_\sharp \temp, \temp) &= \Vert h - I \Vert_\temp = \Bigg( \int \Vert (A-I)x + b \Vert_2^2 d\temp(x) \Bigg)^{\frac{1}{2}}  \\
    &\leq \underbrace{\Vert (A - I) x \Vert_\temp}_{I_1} + \underbrace{\Vert b \Vert_\temp}_{I_2}.
\end{align*}
Let us bound $I_1$ and $I_2$ separately.  For the bound of $I_1$, we have
\begin{align*}
    I_1 &= \bigg( \int \Vert (A - I) x \Vert_2^2 d\temp(x) \bigg)^{\frac{1}{2}} \\
    &\leq \bigg( \int \lambda_{\max}(A - I)^2  \Vert x \Vert_2^2 d\temp(x) \bigg)^{\frac{1}{2}} \\
    &\leq \bigg( \int \lambda_{\max}(A - I)^2  \max_{x \in \text{supp}(\temp)} \Vert x \Vert_2^2 d\temp(x) \bigg)^{\frac{1}{2}} \\
    &= \lambda_{\max}(A - I) \max_{x \in \text{supp}(\temp)} \Vert x \Vert_2 \underbrace{\bigg( \int d\temp(x) \bigg)^{\frac{1}{2}}}_{1} \\
    &\leq  \max\{\vert M - 1 \vert , \vert 1 - \gamma \vert \} \max_{x \in \text{supp}(\temp)} \Vert x \Vert_2.
\end{align*}
For the bound of $I_2$, we have
\begin{align*}
    I_2 &= \Vert b \Vert_\temp = \bigg( \int \Vert b \Vert_2^2 d\temp(x) \bigg)^{\frac{1}{2}} \leq \bigg( \int M_b^2 d\temp(x) \bigg)^{\frac{1}{2}} = M_b.
\end{align*}
Thus, if $h \in \mathcal{H}_{\gamma, M, M_b}$, we have
\begin{align*}
    W_2(h_\sharp \temp, \temp) \leq \max\{\vert M - 1 \vert , \vert 1 - \gamma \vert  \} \max_{x \in \text{supp}(\temp)} \Vert x \Vert_2 +  M_b.
\end{align*}
Using this for our specific choice of $\temp_1$ and $\temp_2$, we find that for $\Tilde{h}, \Tilde{h}^\star \in \mathcal{H}_{\gamma, M, M_b, \epsilon}$, we have
\begin{align*}
    W_2(\temp_1, \temp_2) - W_2(\Tilde{h}_\sharp \temp_1, \temp_1) - W_2(\Tilde{h}^\star_\sharp \temp_2, \temp_2)
\end{align*}
is lower bounded (via equation 2.1 of \cite{ambrosio2013user}) by
\begin{align*}
    W_2(\temp_1, \temp_2) - W_2(h_\sharp \temp_1, \temp_1) - \underbrace{W_2(h_\sharp \temp_1 , \Tilde{h}_\sharp \temp_1)}_{\leq \Vert h - \Tilde{h} \Vert_{\temp_1} < \epsilon } - W_2(h^\star_\sharp \temp_2, \temp_2) - \underbrace{W_2(h^\star_\sharp \temp_2, \Tilde{h}^\star_\sharp \temp_2)}_{\leq \Vert h^\star \Tilde{h}^\star \Vert_{\temp_2} < \epsilon } \\
    \geq W_2(\temp_1, \temp_2) - W_2(h_\sharp \temp_1, \temp_1) - W_2(h^\star_\sharp \temp_2, \temp_2) - 2 \epsilon ,
\end{align*}
which in turn is lower bounded by
\begin{align*}
     W_2(\temp_1, \temp_2) - 2M_b - \max\{\vert M - 1 \vert , \vert 1 - \gamma \vert \} \Big(\underbrace{\max_{x \in \text{supp}(\temp_1)} \Vert x \Vert_2 + \max_{x \in \text{supp}(\temp_2)} \Vert x \Vert_2}_{R_1 + R_2}\Big) - 2\epsilon.
\end{align*}
Now we just need to find sufficient conditions $M, M_b$, and $\gamma$ such that
\begin{align*}
    W_2(\temp_1, \temp_2) - 2M_b - \max\{\vert M - 1\vert , \vert 1 - \gamma\vert \} (R_1+ R_2 ) - 2\epsilon &\geq \delta > 0. & (\star)
\end{align*}
Notice that when $\vert M - 1 \vert > \vert 1 - \gamma \vert$, then $M > 1$ since $M$ is the bound on the largest eigenvalue.  Moreover, note that we cannot have $\gamma - 1 > M - 1$ since $\gamma < M$; thus, the only cases we need to consider are when $M - 1 > 1 - \gamma$ and $M - 1 < 1 - \gamma$.  We handle these cases separately.

\textbf{Case 1 ($M - 1 > 1 - \gamma$):}  Note that in this case we can rewrite $(\star)$ as
\begin{align*}
    M(R_1 + R_2) &\leq W_2(\temp_1, \temp_2) - 2M_b - 2\epsilon - \delta + (R_1+ R_2 ) \\
    M &\leq \frac{W_2(\temp_1, \temp_2) - 2M_b - 2\epsilon - \delta + (R_1+ R_2 )}{R_1 + R_2}.
\end{align*}

\textbf{Case 2 ($1 - \gamma > M - 1$):}  In this case, we can rewrite $(\star)$ as 
\begin{align*}
    \gamma(R_1 +R_2) &\geq \delta + 2M_b + 2\epsilon + (R_1+ R_2 ) - W_2(\temp_1, \temp_2) \\
    \gamma &\geq \frac{\delta + 2M_b + 2\epsilon + (R_1+ R_2 ) - W_2(\temp_1, \temp_2)}{R_1 + R_2}.
\end{align*}
Now we will investigate conditions in which case 1 and case 2 are active.

First note that if
\begin{align*}
    \frac{W_2(\temp_1, \temp_2) - 2M_b - 2\epsilon - \delta + (R_1+ R_2 )}{R_1 + R_2} &> 2 \\
    \iff \frac{W_2(\temp_1,\temp_2) - 2\epsilon - \delta - (R_1 + R_2) }{2} > M_b &> 0 \\
    \iff W_2(\temp_1, \temp_2) > \delta + 2\epsilon + (R_1 + R_2),
\end{align*}
we know that the first case is ensured since we can pick $M > 2$.  In this case, $M - 1 > \vert 1 - \gamma \vert$.  To see this, we see that if $0 < \gamma < 1$, then $M-1 > 2 - 1 = 1 > 1 - \gamma > 0$.  If $1 < \gamma < 2$, we again have that $M > \gamma$ implies that $M - 1 > \gamma - 1$.  Thus, in this regime, the choice of $M$ dominates.

Now if we want $1 < M < 2$, we find that
\begin{align*}
    M \leq \frac{W_2(\temp_1, \temp_2) - 2M_b - 2\epsilon - \delta + (R_1+ R_2 )}{R_1 + R_2} < 2 \\
    \iff \frac{W_2(\temp_1,\temp_2) - 2\epsilon - \delta - (R_1 + R_2) }{2} < M_b.
\end{align*}
Notice that since $M_b \geq 0$, if $W_2(\temp_1, \temp_2) < 2 \epsilon + \delta + (R_1 + R_2)$, then we definitely have the above inequality.  On the other hand,
\begin{align*}
    \frac{W_2(\temp_1, \temp_2) - 2M_b - 2\epsilon - \delta + (R_1+ R_2 )}{R_1 + R_2} > 1 \\
    \iff \frac{W_2(\temp_1,\temp_2) - 2\epsilon - \delta }{2} > M_b > 0 \\
    \iff W_2(\temp_1, \temp_2) > 2\epsilon + \delta.
\end{align*}
So we can pick appropriate $M_b$ such that
\begin{align*}
    \max\Big\{ \frac{W_2(\temp_1,\temp_2) - 2\epsilon - \delta - (R_1 + R_2) }{2}, 0 \Big\} < M_b < \frac{W_2(\temp_1,\temp_2) - 2\epsilon - \delta}{2},
\end{align*}
and in this case, we pick an appropriate $M$ such that
\begin{align*}
    1 < \frac{W_2(\temp_1, \temp_2) - 2\epsilon - 2M_b - \delta + (R_1+ R_2 )}{R_1 + R_2} \leq M < 2.
\end{align*}
In the case when $\vert 1 - \gamma \vert > \vert M - 1 \vert$ case, notice that
\begin{align*}
    \frac{\delta + 2\epsilon + 2M_b + (R_1+ R_2 ) - W_2(\temp_1, \temp_2)}{R_1 + R_2} < 1 \iff \exists M_b < \frac{W_2(\temp_1, \temp_2) - \delta - 2\epsilon }{2};
\end{align*}
thus, we can pick $\gamma$ such that
\begin{align*}
    1 > \gamma \geq \max\bigg\{ \frac{\delta + 2M_b + 2\epsilon + (R_1+ R_2 ) - W_2(\temp_1, \temp_2)}{R_1 + R_2}, 0 \bigg\}.
\end{align*}
So we still can satisfy the conditions for linear separability in these cases. \end{proof}

\section{Multiple References Example}\label{multipleEx}

\begin{example}
Recall the setup of \Cref{multipleLOT_Ex}, where we have two template distributions $\mu_1 = \mathcal{N}(0, \Sigma_1)$ and $\mu_2 = \mathcal{N}(0, \Sigma_2)$, a set of shears 
\begin{align*}
    \mathcal{H} = \{ A x : A =A^\top \in \RR^{n\times n}, M I_n \succeq A \succeq m I_n \succ 0 \}
\end{align*}
as our set of transformations, and reference distributions defined to be of the form $\refe_1 = (h_1)_\sharp \temp_1$ and $\refe_2 = (h_2)_\sharp \temp_2$ for $h_1(x) = A_1 x$ and $h_2(x) = A_2 x$ for $h_1, h_2 \in \mathcal{H}$ so that
\begin{align*}
    \refe_1 = (h_1)_\sharp \mu_1 = \mathcal{N}( 0, A_1 \Sigma_1 A_1^\top), \hspace{0.4cm} \refe_2 = (h_2)_\sharp \mu_2 = \mathcal{N}( 0, A_2 \Sigma_2 A_2^\top).
\end{align*}
Using exercise 6.3.1 of \cite{vershynin2018}, the bounds on our function class $\mathcal{H}$ is given by
\begin{align*}
    \sup_{A \in \mathcal{H}} \Vert (A - I) \Vert_{\temp_j} &= \sup_{A \in \mathcal{H}} \bigg(\mathbb{E}_{\temp_j} \Big[ \Vert (A - I)x \Vert_2^2 \Big]\bigg)^{1/2} = \sup_{A \in \mathcal{H}} \Vert (A - I) \Sigma_j^{1/2} \Vert_F \\
    &\leq \sup_{A \in \mathcal{H}} \Vert A - I \Vert_2 \Vert \Sigma_j^{1/2} \Vert_F \leq \max \big( \vert M - 1\vert , \vert 1 - m \vert \big) \max_{j} \Vert \Sigma_j^{1/2} \Vert_F \\
    &= L.
\end{align*}
To ensure separation, we use $L \leq \frac{W_2(\temp_1, \temp_2) - \delta}{2}$, which implies that
\begin{align*}
    \max \big( \vert M - 1\vert , \vert 1 - m \vert \big) \leq \frac{ \Tr( \Sigma_1 + \Sigma_2 - 2(\Sigma_1^{\frac{1}{2}} \Sigma_2 \Sigma_1^{\frac{1}{2}} )^{1/2} )^{1/2} - \delta }{2\max_{j = 1,2} \Vert \Sigma_j^{1/2} \Vert_F}.
\end{align*}
It is easy to see that $\frac{ W_2(\temp_1, \temp_2) }{2 \max_{j=1,2} \Vert \Sigma_j^{1/2} \Vert_F } < 1$.  This shows the bounds on $M$ and $m$ of \Cref{multipleLOT_Ex}.  Now notice that 
\begin{align*}
    T_{\sigma_j}^{h_\sharp \mu_i } =  (A_j \Sigma_j A_j)^{-1/2} ( (A_j \Sigma_j A_j)^{1/2} (A \Sigma_i A^\top) (A_j \Sigma_j A_j)^{1/2})^{1/2} (A_j \Sigma_j A_j)^{-1/2} x ;
\end{align*}
thus, for $h, \Tilde{h} \in \mathcal{H}$ where $h(x) = A x $ and $\Tilde{h}(x) = \Tilde{A} x $, we get $\Vert T_{\sigma_j}^{h_\sharp \mu_1} - T_{\sigma_j}^{ \Tilde{h}_\sharp \mu_2} \Vert_{\sigma_j}^2$ is equal to
\begin{align*}
    \mathbb{E}\Big[ \Vert S_j^{-1/2}(( S_j^{1/2} (A_1 \Sigma_1 A_1^\top) S_j^{1/2} )^{1/2} - ( S_j^{1/2} (A_2 \Sigma_2 A_2^\top) S_j^{1/2} )^{1/2}) S_j^{-1/2} x \Vert_2^2 \Big],
\end{align*}
where $S_j = A_j \Sigma_j A_j$ and the expectation is with respect to $\refe_j$.  Because $S_j^{-1/2} x = (A_j \Sigma_j A_j)^{-1/2} x \sim \mathcal{N}(0 , I)$ and exercise 6.3.1 of \cite{vershynin2018}, we find that the expectation above is equal to
\begin{align*}
     \Vert S_j^{-1/2}(( S_j^{1/2} (A_1 \Sigma_1 A_1^\top) S_j^{1/2} )^{1/2} - ( S_j^{1/2} (A_2 \Sigma_2 A_2^\top) S_j^{1/2} )^{1/2}) \Vert_F^2.
\end{align*}
Using the Courant-Fischer min-max theorem as explained in \cite{hornandjohnson} and our bounds on the eigenvalues of $h \in \mathcal{H}$, we can see that
\begin{align*}
    \frac{1}{m} \Sigma_j^{-1/2} \succeq (A_j \Sigma_j A_j)^{-1/2} \succeq \frac{1}{M} \Sigma_j^{-1/2}.
\end{align*}
Since $M^2 \Sigma_i \succeq (A_i \Sigma_i A_i) \succeq \epsilon^2 \Sigma_i$, we have
\begin{align*}
    ( (A_j \Sigma_j A_j)^{1/2} \underbrace{(A_1 \Sigma_1 A_1^\top)}_{\succeq m^2 \Sigma_1} (A_j \Sigma_j A_j)^{1/2} )^{1/2} - ( (A_j \Sigma_j A_j)^{1/2} \underbrace{(A_2 \Sigma_2 A_2^\top)}_{\preceq m^2 \Sigma_2} (A_j \Sigma_j A_j)^{1/2} )^{1/2} \\
    \succeq ( m^4 \Sigma_j^{1/2} \Sigma_1 \Sigma_j^{1/2} )^{1/2} - (m^4 \Sigma_j^{1/2} \Sigma_2 \Sigma_j^{1/2} )^{1/2} = m^2 \bigg[ (\Sigma_j^{1/2} \Sigma_1 \Sigma_j^{1/2} )^{1/2} - (\Sigma_j^{1/2} \Sigma_2 \Sigma_j^{1/2} )^{1/2}  \bigg],
\end{align*}
and similarly,
\begin{align*}
    ((A_j \Sigma_j A_j)^{1/2} \underbrace{(A_1 \Sigma_1 A_1^\top)}_{\preceq M^2 \Sigma_1} (A_j \Sigma_j A_j)^{1/2} )^{1/2} - ( (A_j \Sigma_j A_j)^{1/2} \underbrace{(A_2 \Sigma_2 A_2^\top)}_{\preceq M^2 \Sigma_2} (A_j \Sigma_j A_j)^{1/2} )^{1/2} \\
    \succeq ( M^4 \Sigma_j^{1/2} \Sigma_1 \Sigma_j^{1/2} )^{1/2} - (M^4 \Sigma_j^{1/2} \Sigma_2 \Sigma_j^{1/2} )^{1/2} = M^2 \bigg[ (\Sigma_j^{1/2} \Sigma_1 \Sigma_j^{1/2} )^{1/2} - (\Sigma_j^{1/2} \Sigma_2 \Sigma_j^{1/2} )^{1/2}  \bigg].
\end{align*}
This means that $\Vert T_{\sigma_j}^{h_\sharp \mu_1} - T_{\sigma_j}^{ \Tilde{h}_\sharp \mu_2} \Vert_{\sigma_j}^2$ has the following bounds
\begin{align*}
    \bigg\Vert \frac{M^2 }{m} \Sigma_j^{-1/2} \Big[ (\Sigma_j^{1/2} \Sigma_1 \Sigma_j^{1/2} )^{1/2} - (\Sigma_j^{1/2} \Sigma_2 \Sigma_j^{1/2} )^{1/2}  \Big] \bigg\Vert_F^2 \geq \Vert T_{\sigma_j}^{h_\sharp \mu_1} - T_{\sigma_j}^{ \Tilde{h}_\sharp \mu_2} \Vert_{\sigma_j}^2 \\
    \Vert T_{\sigma_j}^{h_\sharp \mu_1} - T_{\sigma_j}^{ \Tilde{h}_\sharp \mu_2} \Vert_{\sigma_j}^2 \geq \bigg\Vert \frac{m^2 }{M} \Sigma_j^{-1/2} \Big[ (\Sigma_j^{1/2} \Sigma_1 \Sigma_j^{1/2} )^{1/2} - (\Sigma_j^{1/2} \Sigma_2 \Sigma_j^{1/2} )^{1/2}  \Big] \bigg\Vert_F^2.
\end{align*}
Moreover, notice that since $\Sigma_j = \Sigma_1$ or $\Sigma_j = \Sigma_2$, we can assume without loss of generality that $\Sigma_j= \Sigma_1$ so that
\begin{align*}
    \Sigma_j^{-1/2} \bigg[ (\Sigma_j^{1/2} \Sigma_1 \Sigma_j^{1/2} )^{1/2} - (\Sigma_j^{1/2} \Sigma_2 \Sigma_j^{1/2} )^{1/2}  \bigg] =  \Sigma_1^{1/2} - \Sigma_1^{-1/2}(\Sigma_1^{1/2} \Sigma_2 \Sigma_1^{1/2} )^{1/2}.
\end{align*}
We can show, however, that the Frobenius norm of the right-hand side is actually $W_2(\temp_1, \temp_2)$.  To see this, first notice that because $\Sigma_1^{1/2} \Sigma_2 \Sigma_1^{1/2}$ is symmetric, using the cyclic property of traces, we have
\begin{align*}
    \Vert \Sigma_1^{-1/2} (\Sigma_1^{1/2} \Sigma_2 \Sigma_1^{1/2})^{1/2} \Vert_F^2 &= \Tr( (\Sigma_1^{1/2} \Sigma_2 \Sigma_1^{1/2})^{1/2} \Sigma_1^{-1/2} \Sigma_1^{-1/2} (\Sigma_1^{1/2} \Sigma_2 \Sigma_1^{1/2})^{1/2} ) \\ 
    &= \Tr( \Sigma_1^{-1} \Sigma_1^{1/2} \Sigma_2 \Sigma_1^{1/2}) = \Tr(\Sigma_2).
\end{align*}
Applying this result, we have that $\Vert \Sigma_1^{1/2} - \Sigma_1^{-1/2}(\Sigma_1^{1/2} \Sigma_2 \Sigma_1^{1/2} )^{1/2} \Vert_F^2$ is equal to
\begin{align*}
    \Vert \Sigma_1^{1/2} \Vert_F^2 + \underbrace{\Vert \Sigma_1^{-1/2} (\Sigma_1^{1/2} \Sigma_2 \Sigma_1^{1/2})^{1/2} \Vert_F^2}_{\Tr(\Sigma_2)} - 2 \Tr( \Sigma_1^{1/2} \Sigma_1^{-1/2}(\Sigma_1^{1/2} \Sigma_2 \Sigma_1^{1/2} )^{1/2}) \\
    = \Tr( \Sigma_1 + \Sigma_2 - 2 (\Sigma_1^{1/2} \Sigma_2 \Sigma_1^{1/2}) ) = W_2(\temp_1, \temp_2)^2 .
\end{align*}
So we get that
\begin{align*}
    \frac{M^2}{m} W_2(\temp_1,\temp_2) \geq \Vert T_{\sigma_j}^{h_\sharp \mu_1} - T_{\sigma_j}^{ \Tilde{h}_\sharp \mu_2} \Vert_{\sigma_j} \geq \frac{m^2}{M} W_2(\temp_1,\temp_2)
\end{align*}
for our choices of reference distributions, of which there are infinite choices because our choices of $\epsilon$ and $M$ are constrained by
\begin{align*}
    1 - \frac{ W_2(\temp_1, \temp_2) }{2 \max_{j=1,2} \Vert \Sigma_j^{1/2} \Vert_F } \leq m \leq M \leq 1 + \frac{ W_2(\temp_1, \temp_2) }{2 \max_{j=1,2} \Vert \Sigma_j^{1/2} \Vert_F } .
\end{align*}
\end{example}

\section{The shearing transformation}
\label{shearing}
\begin{algorithm}
\begin{algorithmic}[1]
\STATE \textbf{Inputs} : $28 \times 28$ matrix of pixel values corresponding to the image, matrix $A$ and shift $b$.
\STATE \textbf{Output} : A $28 \times 28$ matrix of pixel values corresponding to the transformation $A(x - center) + b$. (Center of the image is assumed to be $(14,14)$). Here $x = (i,j)$ where $i,j \in \{ 1,2, \cdots 28 \}$.\\
\STATE \texttt{ShearedImage} $\leftarrow$ An empty $28 \times 28$ array 
\FOR {$i=1, \cdots 28$}
    \FOR {$j=1, \cdots 28$}
       \STATE $y \leftarrow (i,j) - center$ 
       \STATE $x \leftarrow A^{-1}(y-b) + center$
       \IF {$x_1 > 28$ or $x_1<=0$ or $x_2 > 28$ or $x_2<=0$ }
            \STATE \texttt{ShearedImage(i,j)} $\leftarrow$ 0
        \ELSE
            \STATE \texttt{ShearedImage(i,j)} $\leftarrow$ Interpolation of the pixel values (of the original image) of the four grid points corresponding to the grid box  which $x$ belongs to.
        \ENDIF
    \ENDFOR
\ENDFOR
\RETURN \texttt{ShearedImage}
\end{algorithmic}
\caption{Procedure to produce shears of an image}
\label{shears}
\end{algorithm}
Following notation introduced in Section 4 of the main text, the function class $\mathcal{H}$ with respect to which we perform numerical experiments on MNIST images to study linear separability is,
\begin{equation}\label{shearsH}
    \mathcal{H} = \Big\{ Ax + b : \text{$A$ is symmetric positive definite}, \: b \in \mathbb{R}^2 \Big\},
\end{equation}
Specifically we choose $A$ to be,
\begin{equation}
    A  = \begin{pmatrix}
    \cos \theta & -\sin \theta\\
    \sin \theta & \cos \theta
    \end{pmatrix}^T
    \begin{pmatrix}
    \lambda_1 & 0\\
    0 & \lambda_2
    \end{pmatrix}
    \begin{pmatrix}
    \cos \theta & -\sin \theta\\
    \sin \theta & \cos \theta
    \end{pmatrix}
\end{equation}
where, $\lambda_1, \lambda_2 >0$ so that $A$ is positive definite. In the subsequent sections, we present the classification results for two different choices for the range of parameter ($\lambda_1, \lambda_2, \theta, b$) values, one representing a mild shearing of the images and the other representing a severe shearing of the images.
%%%%%%%%%%%%%%%%%%%%%%%%%%%%%%
\section{Standard deviation in test error of MNIST classification experiments}
\begin{figure}[H]
    \centering
    \includegraphics[width=\textwidth]{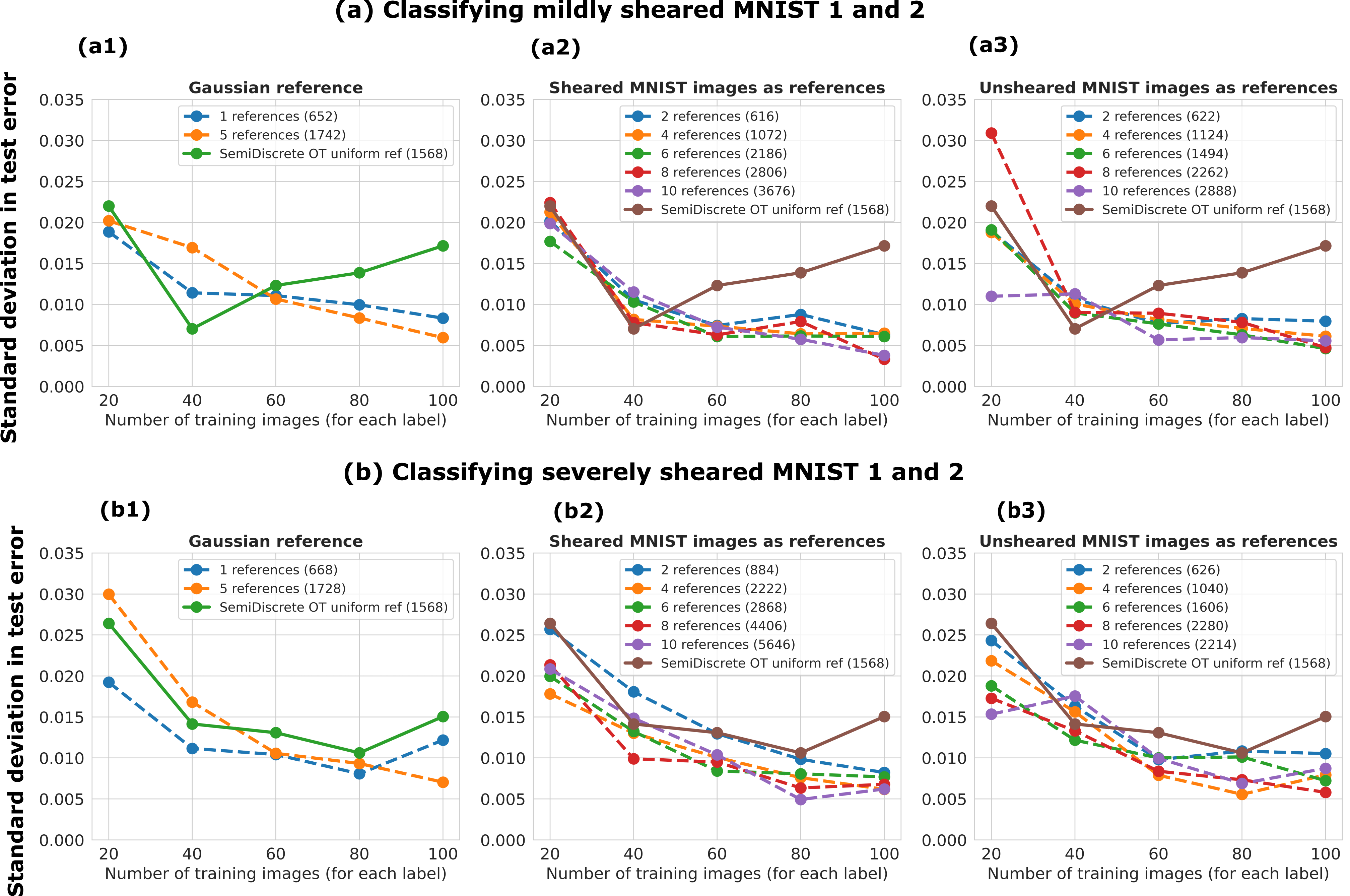}
    \caption{\scriptsize{(a) Standard deviation in test errors for binary classification of mildly sheared MNIST 1s and 2s using (a1) Gaussian references (a2) sheared MNIST 1s and 2s as references (a3) unsheared MNIST 1s and 2s as references. (b) Standard deviation in test errors for binary classification of severely sheared MNIST 1s and 2s using (b1) Gaussian references (b2) sheared MNIST 1s and 2s as references (b3) unsheared MNIST 1s and 2s as references. In the cases where MNIST images are used as references, the results are reported for the cases where the number of references used is $2i$ for $i=1, \cdots 5$ wherein $i$ images from each class are randomly drawn to be used as references from a pool of images that do not correspond to any of the training and testing images. For each fixed number of training images per class, $N_{train}$, the mean test classification error averaged across 20 random choices of $N_{train}$ training images (per class) and $1000$ test images (per class) is reported. The number inside the parenthesis in the legends of the images denote the length of the LOT feature vector corresponding to the particular choice of references. In all figures, for comparison, the results for classification using the semi discrete linear optimal transport framework \cite{merigot20} which uses the uniform measure as the reference is also reported.}}
    \label{multirefImage12std}
\end{figure}

\begin{figure}[H]
    \centering
    \includegraphics[width=\textwidth]{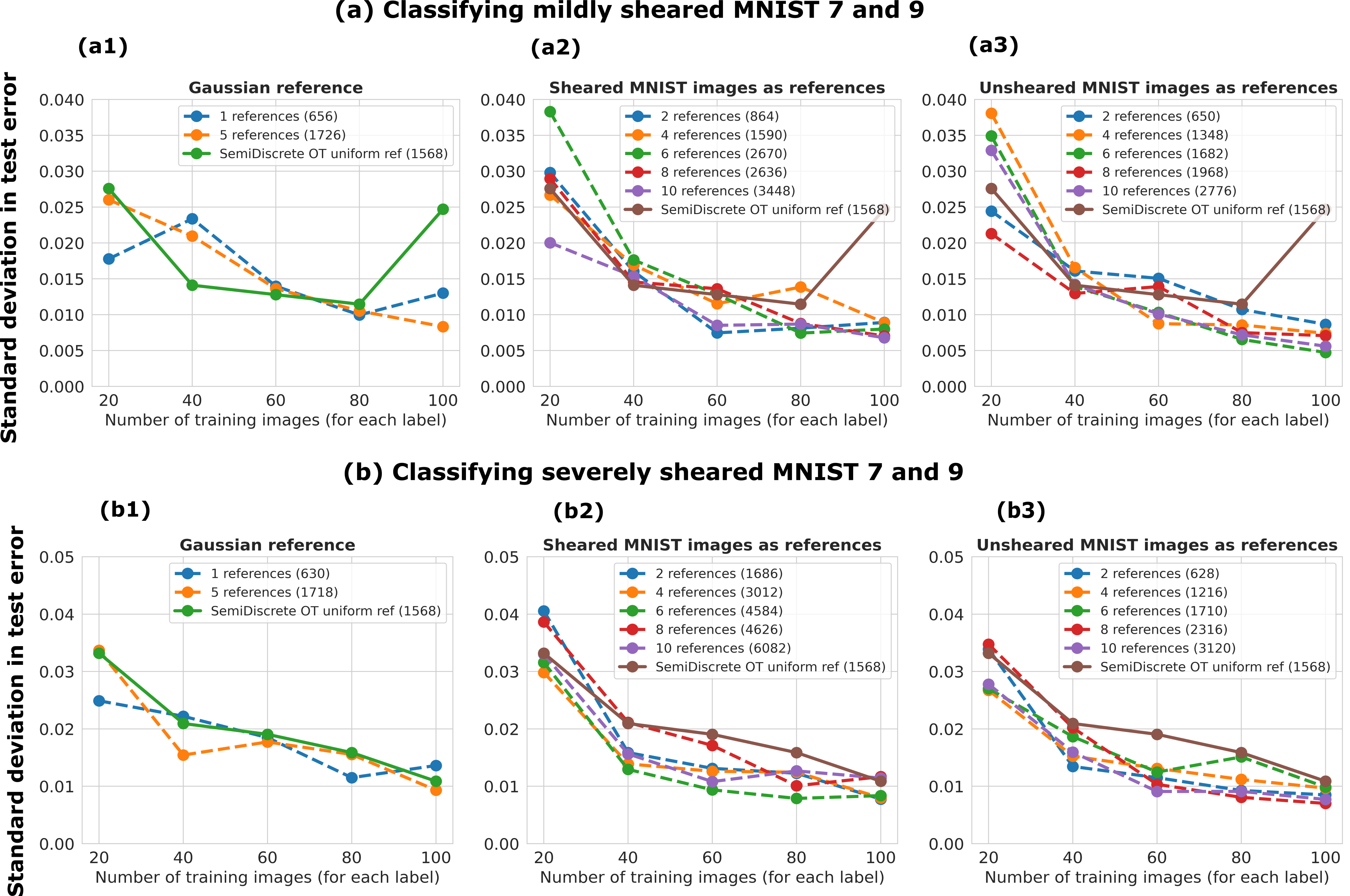}
    \caption{\scriptsize{(a) Standard deviation in test errors for binary classification of mildly sheared MNIST 7s and 9s using (a1) Gaussian references (a2) sheared MNIST 7s and 9s as references (a3) unsheared MNIST 7s and 9s as references. (b) Standard deviation in test errors for binary classification of severely sheared MNIST 7s and 9s using (b1) Gaussian references (b2) sheared MNIST 7s and 9s as references (b3) unsheared MNIST 7s and 9s as references. In the cases where MNIST images are used as references, the results are reported for the cases where the number of references used is $2i$ for $i=1, \cdots 5$ wherein $i$ images from each class are randomly drawn to be used as references from a pool of images that do not correspond to any of the training and testing images. For each fixed number of training images per class, $N_{train}$, the mean test classification error averaged across 20 random choices of $N_{train}$ training images (per class) and $1000$ test images (per class) is reported. The number inside the parenthesis in the legends of the images denote the length of the LOT feature vector corresponding to the particular choice of references. In all figures, for comparison, the results for classification using the semi discrete linear optimal transport framework \cite{merigot20} which uses the uniform measure as the reference is also reported.}}
    \label{multirefImage79std}
\end{figure}

\section{Numerical validation of example \ref{example11}}
\label{example11appendix}
To illustrate \Cref{shearTheorem}, we had provided a simple example with Gaussians (see example \ref{example11} of main text).  Let $\temp = \mathcal{N}(m_1, I_n)$.  Consider a symmetric positive definite matrix $A$ with spectral decomposition $A = P^\top \Lambda P$ and a corresponding fixed shear $S(x) = A x + b$ for some $b \in \RR^n$, which yields the pushforward $S_\sharp \mu = \mathcal{N}(A m_1 + b, AA^\top)$.  For simplicity, we will check that the subset of compatible affine transformations 
\begin{equation}
\begin{split}
    \mathcal{F}_{\text{affine}}(P) &= \{ f(x) = C x + d  : f \in \mathcal{F}(P)\} \\
    &= \{ P^\top D P x + d : D_{ij} = 0 \hspace{0.1cm}\forall\hspace{0.1cm} i \neq j,  D_{ii} > 0, d \in \RR^n \}
\end{split}
\label{Faffinepp}
\end{equation}

yields reference distributions $\refe \in \{f_\sharp \mu : f \in \mathcal{F}_{\text{affine}}(P) \}$ so that the compatibility condition hold.  In particular note that for $f(x) = Cx + d = P^\top D P x + d$, our reference distributions have the form
\begin{align*}
    \refe = \mathcal{N}( Cm_1 + d, C C^\top) = \mathcal{N}( C m_1 + d, P^\top D^2 P).
\end{align*}

\begin{figure}[H]
    \centering
    \includegraphics[width=0.75\textwidth]{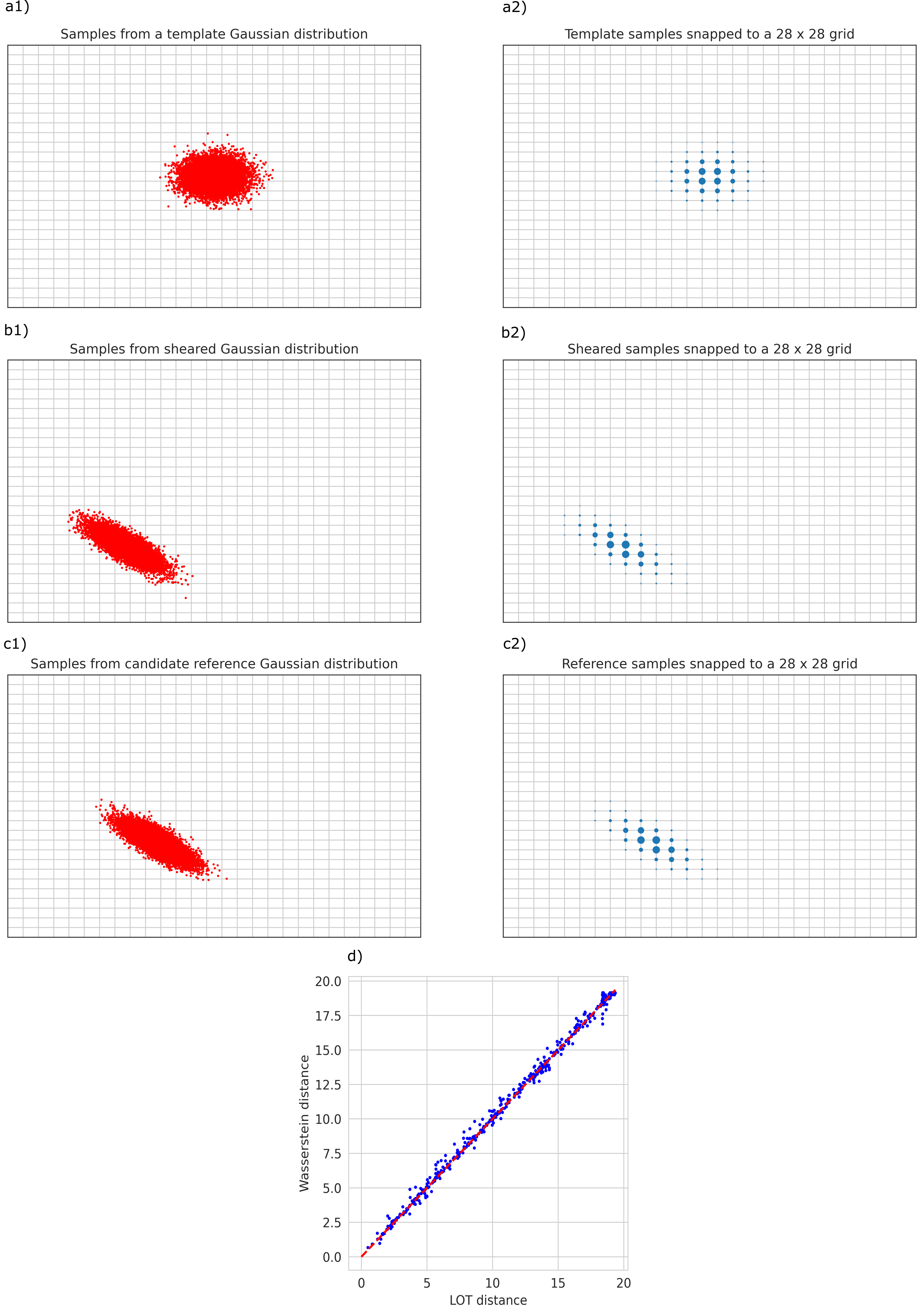}
    \caption{\scriptsize{a1) Samples from a Gaussian distribution that serves as the template $\mu$. a2) Approximation of the template distribution as a discrete distribution on a grid. b1) Samples from sheared distribution $S_{\sharp}\mu$. b2) Approximation of the sheared distribution as a discrete distribution on a grid. c1) Samples from a candidate referencec distribution $f_{\sharp}\mu \in \mathcal{F}_{\text{affine}}(P)$ (equation \ref{Faffinepp}). c2) Approximation of the reference distribution as a discrete distribution on a grid. d) Numerical validation of the equivalence of LOT distance $W_{2,f_{\sharp \mu}}^{LOT}(\mu,S_{\sharp} \mu) $ and the Wasserstein distance $W_2(\mu,S_{\sharp} \mu)$ under compatibility as in example \ref{example11} using the LOT framework for different choices of shear $S$.}}
    \label{example11Illustration}
\end{figure}

\section{Comparison with Convolutional Neural Networks (CNNs)}
\begin{figure}[H]
    \centering
    \includegraphics[width=\textwidth]{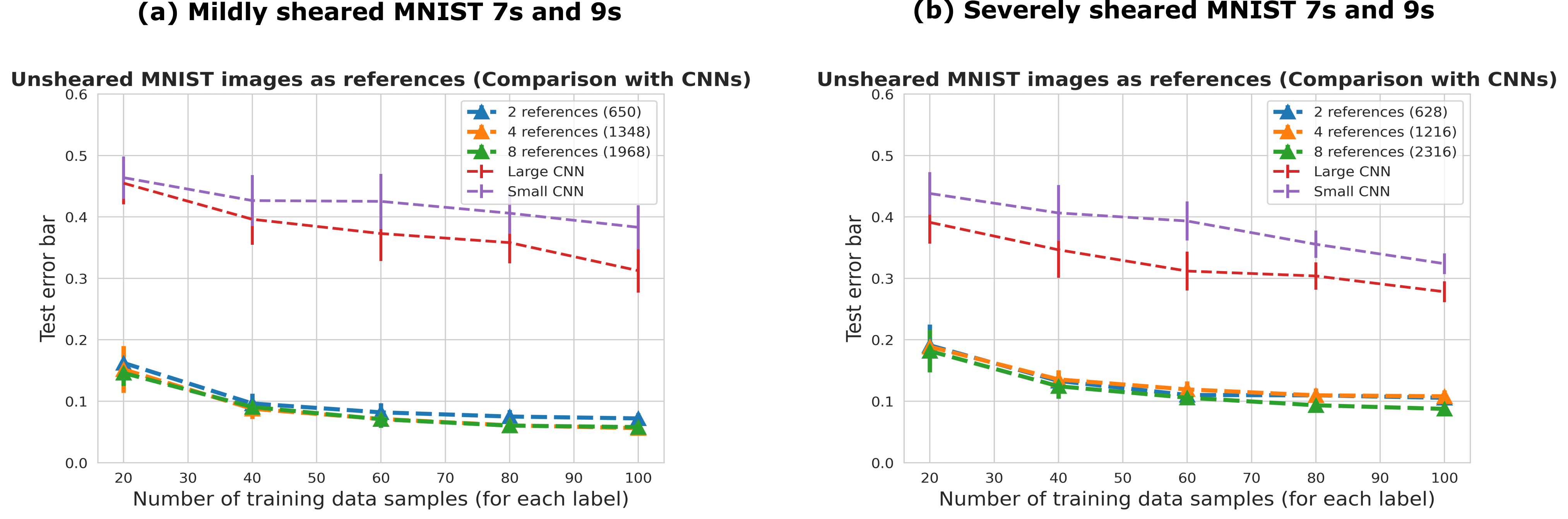}
    \caption{\scriptsize{Comparison of discrete LOT classification of (a) mildly sheared MNIST 7s and 9s (b) severely sheared MNIST 7s and 9s with convolutional neural network with 1586 training parameters (labelled small CNN) and 3650 training parameters (labelled large CNN) under identical training and testing conditions.}}
    \label{CNNcomparison}
\end{figure}

\end{appendix}
\end{document}